\def\section{\@startsection{section}{1}%
  \z@{1.1\linespacing\@plus\linespacing}{.8\linespacing}%
  {\normalfont\Large\scshape\centering}}
\theoremstyle{plain}
\newtheorem*{conj*}{Root Groups Conjecture}
\newcommand{\etype}[1]{\renewcommand{\labelenumi}{(#1{enumi})}}
\newtheorem*{thm1.2}{(1.2) Theorem}
\newtheorem*{thm1.3}{(1.3) Theorem}
\newtheorem*{thm1.4}{(1.4) Theorem}
\newtheorem*{prop*}{Proposition}
\def\Miy{\operatorname{Cl}}
\newtheorem{prop}{Proposition}[section]
\newtheorem{thm}[prop]{Theorem}
\newtheorem{cor}[prop]{Corollary}
\newtheorem{lemma}[prop]{Lemma}
\theoremstyle{definition}
\newtheorem{Def}[prop]{Definition}
\newtheorem*{Def*}{Definition}
\newtheorem{Defsnot}[prop]{Definitions and notation}
\newtheorem{example}[prop]{Example}
\newtheorem{examples}[prop]{Examples}
\newtheorem*{notation*}{Notation}
\newtheorem{remark}[prop]{Remark}
\newtheorem*{remark*}{Remark}
\newcommand{\calb}{\mathcal{B}}
\newcommand{\calg}{\mathcal{G}}
\newcommand{\calx}{\mathcal{X}}
\newcommand{\ff}{\mathbb{F}}
\newcommand{\uu}{\mathbb{U}}
\newcommand{\Cl}{\operatorname{Cl}}
\newcommand{\ga}{\alpha}
\newcommand{\gb}{\beta}
\newcommand{\gc}{\gamma}
\newcommand{\gd}{\delta}
\newcommand{\one}{\mathbf 1}
\newcommand{\gl}{\lambda}
\newcommand{\gvp}{\varphi}
\newcommand{\gr}{\rho}
\newcommand{\gs}{\sigma}
\newcommand{\gt}{\tau}
\newcommand{\sminus}{\smallsetminus}
\newcommand{\lan}{\langle}
\newcommand{\ran}{\rangle}
\newcommand{\Aut}{{\rm Aut}}
\def\eroman{\etype{\roman}}
\newcommand{\half}{\frac{1}{2}}
\newcommand{\widebar}[1]{\overset{\mskip1mu\hrulefill\mskip1mu}{#1}
                \vphantom{#1}}
\numberwithin{equation}{section}
\begin{document}
{\title[Structure of primitive  axial algebras]{Structure of
primitive axial algebras}
\author[Louis Halle Rowen , Yoav Segev]
{Louis Halle Rowen$^*$\qquad Yoav Segev}

\address{Louis Rowen\\
         Department of Mathematics\\
         Bar-Ilan University\\
         Ramat Gan\\
         Israel}
\email{rowen@math.biu.ac.il}
\address{Yoav Segev \\
         Department of Mathematics \\
         Ben-Gurion University \\
         Beer-Sheva 84105 \\
         Israel}
\email{yoavs@math.bgu.ac.il}}
\thanks{$^*$The first author was supported by the ISF grant 1994/20}

\keywords{axis, flexible algebra, power-associative, fusion rules,
idempotent}
\subjclass[2010]{Primary: 17A05, 17A15, 17A20 ;
 Secondary: 17A36, 17C27}

\begin{abstract}
``Fusion rules'' are laws of multiplication among eigenspaces of an
idempotent. This terminology is relatively new and is closely
related to primitive axial algebras, introduced recently by Hall,
Rehren and Shpectorov. Axial algebras, in turn, are closely related
to $3$-transposition groups and vertex operator algebras.

In  earlier work we studied primitive axial algebras, not
necessarily commutative, and showed that they all have Jordan type.
In this paper, we  show that all finitely generated primitive axial
algebras are direct sums of specifically described flexible finite
dimensional noncommutative algebras, and commutative axial algebras
generated by primitive axes of the same type.  In particular, all
primitive axial algebras are flexible. They also have Frobenius
forms. We give a precise description of all the primitive axes of
axial algebras  generated by two primitive axes.
\end{abstract}
\date{\today}

 \maketitle

{\small\tableofcontents}

\section{Introduction}

 Motivated by group theory and associated schemes, considerable interest has
arisen in studying {\it  axes} (a kind of idempotents) and the
algebras they generate, called {\it  axial algebras}. In previous
papers \cite{RoSe1,RoSe2,RoSe3}, we classified non-commutative
algebras generated by two primitive axes.  Our goal in this paper is
to produce techniques to study the structure of (not necessarily
commutative) primitive axial algebras of Jordan type, utilizing a close study of
algebras generated by two primitive axes.

\begin{Defsnot}\label{not1}$ $
\begin{enumerate}
\item
$A$ always denotes an  algebra {\it (possibly not commutative)}, over a
field~$\ff$ of characteristic $\ne 2.$ We designate an idempotent $a
= a^2\in A$
 and elements
$\gl,\gd\notin \{0,1\}$ in~$\ff.$

\item For $ y\in A,$ write $L_y$ for the left  multiplication map
$z\mapsto yz$ and $R_y$ for the right multiplication map $z\mapsto
zy$.

\item
Write $A_{\eta}(X_a)$  for the $\eta$-eigenspace  of $A$ with
respect to $X_a,$  where $X\in \{R, L\}.$

A {\it left axis} $a$ is a semisimple idempotent, i.e.,
$A$ is a direct sum of its left eigenspaces with respect to $L_a$.
A  left axis $a$ is {\it  primitive} if $A_1(L_a) = \ff a.$
A {\it right axis} is defined similarly.  A {\it (two sided) \it axis} $a$
is a left axis which is also a right axis and such that $L_aR_a=R_aL_a,$
i.e., $a(ya)=(ay)a$ for all $y \in A.$
The axis $a$ is {\it primitive} if it is primitive both as a left and a right axis, i.e.,
\[
A_1(a): = A_{1}(L_a) =\ff a = A_{1}(R_a).
\]

\item
We say that a left axis $a$ has  {\it  eigenvalue set $\tilde \gl$}, if $\tilde \gl = \{ \gl_1, \dots, \gl_m$\}
is the set of eigenvalues
 of $L_a$.  The eigenvalues $0$ and $1$ play a special role, so
 in this paper we take the simplest nontrivial case, that there  is only one left eigenvalue $\gl$ other than $\{0,1\}$;
we say that  $a$ is a left axis of {\it  type $\gl$} if
\begin{equation}\label{dec1}
A=\overbrace{A_{1}(L_a)\oplus A_{0}(L_a)}^{\text {$0$-part}}\oplus
\overbrace{A_{\gl}(L_a)}^{\text{$1$-part}},
\end{equation}
where  this is a  $\mathbb Z_2$-grading of $A,$ which is called the
``{\it fusion rules}'',    a special case of general fusion rules treated
in \cite{DPSC}.
A  {\it    right axis of
 type $\gd$} is defined analogously.

An axis $a$ of type $(\gl,\gd)$ is a left axis of
type $\gl$ which is also a    right axis of type $\gd.$    Since $L_aR_a=R_aL_a,$
$A$ decomposes as a direct sum of subspaces
\[
A_{\mu,\eta}(a):= A_{\mu}(L_a)\cap A_{\eta}(R_a).
\]
 An element in
$A_{\gl,\gd }(a)$ will be called a $(\gl,\gd)$-{\it eigenvector} of
$a,$  with {\it eigenvalue pair} $(\gl,\delta)$.

 If $\gl =\gd$ we
say that $a$ is of {\it commutative} type, which we sometimes call $\gl,$
and sometimes $(\gl,\gl);$
otherwise $a$ is  of {\it noncommutative}  type $(\gl,\gd).$

When $\gl\ne\gd,$ following \cite[Definition~1.3]{RoSe1}, we continue
the decomposition of~\eqref{dec1} to
\begin{equation}\label{dec11}
A=\overbrace{A_{1}(a) \oplus A_{0,0}(a) }^{\text {$(+,+)$-part}}
\oplus \overbrace{A_{0,\gd}(a)}^{\text {$(+,-)$-part}} \oplus
\overbrace{A_{\gl,0}(a)}^{\text {$(-,+)$-part}} \oplus
\overbrace{A_{\gl,\gd}(a).}^{\text {$(-,-)$-part}}
\end{equation}
and this is a noncommutative (i.e.~two sided) $\mathbb Z_2\times
\mathbb Z_2$ -grading of $A.$

\item
Throughout this paper, when we say that $a\in A$ is a primitive axis,
we mean that $a$ has some type $(\gl,\gd);$ that is we assume that
the eigenvalues of $L_a$ are contained in $\{0,1,\gl\},$ that the eigenvalues of $R_a$ are contained in $\{0,1,\gd\},$
and that equation \eqref{dec11} is satisfied.

\item
In \cite[Theorem 2.16]{RoSe3} we proved that if $A$ is generated by
primitive axes, then for all these axes we have
\begin{equation}\label{J}
A_{\gl,0}(a)=A_{0,\gd}(a)=0.
\end{equation}
 Since we will be concerned with algebras generated by primitive axes, from now on, {\it a primitive axis of type
$ (\gl,\gd)$} is a primitive axis satisfying \eqref{J}. Thus writing
$A_0(a)$ for $A_{0,0}(a),$ we have \
\[
A=\overbrace{A_{1}(a)\oplus A_{0}(a)}^{\text {$0$-part}}\oplus \overbrace{A_{\gl}(a)}^{\text{$1$-part}},
\]
with a $ \mathbb Z_2$-grading. Accordingly any
$y\in A$ has a unique decomposition
\[
y=\ga_y a+y_{0}+y_{\gl,\gd}, \qquad \ga_y\in\ff,\, y_0 \in A_0(a), \ y_{\gl,\gd}\in
A_{\gl,\gd}(a).
\]

\item
 $X\subseteq A$ denotes a set of primitive axes, not necessarily
generating $A.$ $\langle\langle X \rangle\rangle$ denotes the
subalgebra of $A$ generated by $X$.

\item
$A$ is a  {\it primitive  axial algebra} (of Jordan type), {\it called a PAJ},
if it is generated by a set $X$ of primitive axes  (not necessarily
of the same type). We say that $A$ is {\it finitely generated} if
$X$ is a finite set, and $A$ is  {\it $n$-generated}  if $|X|=n.$

A~{\it CPAJ} is a commutative PAJ.

\item
Given a PAJ $A$ we denote by $\chi(A)$ the set of {\it all} primitive
axes of $A.$
\end{enumerate}
\end{Defsnot}

\begin{remark}
When a primitive axis $a$ is of  noncommutative type $(\gl,\gd),$
then  $\gd = 1 -\gl,$ by \cite[Theorem 2.5, Proposition~2.9, and
Example~2.6]{RoSe2}.
\end{remark}

\begin{Def}\label{hom}
If $x\in X$ is of type $(\gl , \gd),$ resp.~ $\gl$ in the commutative case, we define the {\it
complementary type} of $x$ to be~$(\gd, \gl),$ resp.~$1-\gl.$

Write $X_{\gl}$ (resp.~$X_{\gl,\gd}$) for the set of primitive
axes of~$X$ of commutative type $\gl$~(resp.~of noncommutative type
$(\gl,\gd)).$ Write $X_{\{\gl,\gd\}}:=X_{\gl,\gd}\cup X_{\gd,\gl}.$ Call  $X$ {\it uniform} of type $\gl$
(resp.~type~$(\gl,\gd),$ resp.~type~$\{\gl,\gd\}$) if $ X=X_\gl$  (resp.~$X=X_{\gl,\gd},$
resp.~$X=X_{\{\gl,\gd\}}$). A PAJ $A$ is {\it uniformly generated}  of type
$\gl$ (resp.~type$(\gl,\gd),$resp.~type~$\{\gl,\gd\}$) if $A = \lan\lan
X_{\gl}\ran\ran$ (resp.~$A=\lan\lan X_{\gl,\gd}\ran\ran,$ resp.~$A = \lan\lan
X_{\{\gl,\gd\}}\cup X_{\{\gd,\gl\}}\ran\ran$).
\end{Def}

By definition, all of the algebras in \cite{HRS} are uniformly
generated of type $\gl.$

\subsection{The structure of the paper}$ $
\medskip

After a review of the basics of PAJ's, we bring in the fundamental
notion of Miyamoto involutions, and the ensuing topology on the set
of primitive axes (\S\ref{Miyfhol0}), extended to the noncommutative setting.
Accompanying this is the axial graph,
whose  components turn out to be uniform, cf.~\S\ref{dePAJ}, of type
which  is uniquely determined up to taking complementary type.

This leads us to the ``axial decomposition'' (Theorem~\ref{miya1})
presenting $A$ as a direct product of a CPAJ and various
noncommutative PAJ's, given in  \S\ref{nonPAJ}, the most basic of
which is  $\uu_E (\gl)$ ($\gl \ne 0,1$),  defined  to be the algebra
having as a basis the set of idempotents $E:= \{ e_i: i \in I\}$,
with multiplication given by $e_ie_j = \gd e_i + \gl e_j,$ for each
$i,j\in I$, where $\gd = 1 - \gl$ (Definition~\ref{uuI}), whose
dimension is $|E|.$ From this we conclude that all PAJ's are
flexible (Theorem~\ref{flex2}).

Having reduced the theory to uniformly generated CPAJ's, in
\S\ref{lowdim} we delve deeper into properties of primitive axes. We
start with the projection $\gvp_a$ of~$A$ to $\ff a$, for a
primitive axis $a$, and continue by determining all idempotents in a
2-generated PAJ $B=\lan\lan a,b\ran\ran$, which all turn out to be
primitive axes in $B$.

In \S\ref{Frob} we obtain a Frobenius form for any PAJ, which is not
quite unique, and implies $A_0(a)^2 \subseteq A_0(a),$ which was an
axiom in \cite{HRS} but is in fact redundant.

\subsection{Main results}$ $
\medskip

$A$ is a  PAJ  generated by a set of primitive axes~$X$, and $a\in
X$ in all of these theorems. The notation and terminology
can be found in the appropriate sections.  These are our main results:
\medskip

{\bf Theorem A} (Theorem~\ref{KMt})
\begin{enumerate}\eroman
\item
(Generalizing \cite[Corollary 1.2, p.~81]{HRS}.) $A$ is spanned by
the set~$\Miy(X)$.

\item
Let $V\subseteq A$ be a subspace containing $X$ such that $xv\in V,$
or $vx\in V,$ for all $v\in V$ and $x\in X,$ then $V=A.$

\item
If $X$ is uniform of type $\{\gl,\gd\}$ (resp.~$(\gl,\gd)$
resp.~$\gl$), then $A$ is spanned by primitive axes of type
$\{\gl,\gd\}$ (resp.~$(\gl,\gd)$ resp.~$\gl$).
\end{enumerate}
\medskip

{\bf Theorem B} (Theorem~\ref{miya1})
Any  PAJ $A=\lan\lan X\ran\ran$ is a sum of uniformly generated algebras $A_i:=\lan\lan X_i\ran\ran,$
where $\{X_i : i\in I\}$ are the connected components of $\Cl(X)$ in the
axial graph (\S 3).

 This is strengthened in
Theorem~\ref{strongcon}.

\medskip

{\bf Theorem C} (Theorem~\ref{U1} and
Theorem~\ref{almostcomm})$ $
\smallskip

\noindent Suppose  $X= X' \cup X''$ with $X' = X_{\gl,\gd}$ and $X''
= X_{\gd,\gl},$ where $\gl + \gd = 1$ and $\gl \ne \gd.$ Set
$A':=\langle\langle X'\rangle\rangle$ and $A'':=\langle\langle
X''\rangle\rangle.$ Let
\[
Z =\Big\{ \sum \ff x'x'': x'\in X',\ x''\in X''\Big\}.
\]
Let $a, a'\in A$ be two primitive axes of type $(\gl,\gd)$
and $b\in A$ be an axis of type $(\gd,\gl).$
\begin{enumerate}\eroman
\item $A'$ is a direct product of copies of various $\uu_{E_i} (\gl)$, and  $A''$ is a direct product of copies of
various $\uu_{E_j
} (\gd)$.
\item
$ab, ba\in
A_{\gl,\gd}(a'),$  if $aa'\ne 0,$ while $ab, ba\in A_0(a'),$ if
$aa'=0.$  In particular if $x'\in A$ is an axis of type $(\gl,\gd),$
with $a'x'\ne 0,$ then $A_0(a')\cap Z=A_0(x')\cap Z$ and
$A_{\gl,\gd}(a')\cap Z=A_{\gl,\gd}(x')\cap Z.$

\item
$Z$ is an ideal in $A,$ with $Z^2 = 0,$ so $A = A' + A'' + Z.$
Consequently $A$ is finite dimensional if $X$ is finite.

\item
  If $w\in A_{\gl,\gd}(a),$ then $w=v'+z',$
with $v'\in(\ff a+A')\cap A_{\gl,\gd}(a)$ and $z'\in Z\cap
A_{\gl,\gd}(a).$

In particular, if $a\in X',$ then
\[
A_{\gl,\gd}(a) =A'_{\gl,\gd}(a)+Z\cap A_{\gl,\gd}(a).
\]
Hence if $a_1, a_2\in X'$ are in the same connected component of
$X',$ then $A_{\gl,\gd}(a_1) =A_{\gl,\gd}(a_2).$  Furthermore $A_{\gl,\gd}(a)^2=0.$

\item
Any primitive axis in $A'$ (resp. $A''$)  is a primitive axis in
$A.$

\item
Suppose $X'$ is the set of all primitive axes of $A$ of type $(\gl,\gd).$
Taking one
 primitive axis $a_j'$ for each connected component of $X'$
 we have $Z\subseteq \sum_{j\in J}A'_{\gl,\gd}(a'_j).$
The same assertion holds with $X''$ in place
of~$X'$ (and $(\gd,\gl)$ in place of $(\gl,\gd)$).

\item
If $X'$ is connected, then $A_{\gl,\gd}(a)=\sum \ff_{x'\in X'} (x'-a)+Z,$ for all $a\in X',$
so $A_{\gl,\gd}(a)$ is an ideal of $A,$ and $A/A_{\gl,\gd}(a) \cong \ff\times \widebar{A''}.$

\item
 Suppose $X=\chi(A).$ Taking one
 primitive axis $a_j'$ for each connected component of $X'$ and  one
 primitive axis $a_j''$ for each connected component of $X''$, let
\[
\textstyle{I:=\sum _j  A'_{\gl,\gd}(a'_j) +\sum _j   A''_{\gl,\gd}(a_j'').}
\]
Then $I$  is an ideal in $A,$ and $A/I$ is
 a direct product of fields.

\item
$A$ has no nonzero annihilating element. Hence $A$ is a direct
product of the $A_i$'s where $A_i$ is as in Theorem B.
 \end{enumerate}
\medskip

{\bf Theorem D} (Theorem \ref{dec2})
Every PAJ $A$ is a direct product of uniformly generated noncommutative PAJ's (described in Theorem C) with a CPAJ.
\medskip

{\bf Theorem E} (Theorem \ref{flex2})
Any PAJ  is flexible.
\medskip

{\bf Theorem F} (Theorem~\ref{thm frob} and Remark \ref{-1}) $ $
\begin{itemize}
\item[(i)]
There is a set  $X'$ of
primitive axes, generating $A$, and a
 Frobenius form for
which all of the  axes of $X'$ have norm $1.$

\item[(ii)]
One of the following holds
\begin{itemize}
\item[(1)]
We can choose $X'$ in (i) so that $a\in X';$ or

\item[(2)]
$a$ is of type $-1,$ there  exists a primitive axis
$b$ of type $2,$ with $ab\ne 0,$ and $a$ is in the radical
of any Frobenius form on $A.$
\end{itemize}
\end{itemize}
\medskip

{\bf Theorem G} (Theorem~\ref{A02})
$A_0(a)^2\subseteq A_0(a),$ unless perhaps $a$ is as in part (ii2)
of Theorem F.
\medskip

{\bf Theorem H} (Theorem \ref{axes}, Theorem~\ref{nothalf}, Proposition~\ref{phalf})$ $
\smallskip

\noindent
If $A= \langle\langle a,b \rangle\rangle$ is a
2-generated CPAJ, then   all nontrivial
idempotents in $A$ are primitive axes, and are explicitly described. For
$\gl \ne \half,$   $A$ contains exactly $6$ idempotents,
and all primitive axes of the same type are conjugate with respect to
a Miyamoto involution. For $\gl = \half,$ let $\ga_b$ as in notation \ref{not1}(6):
\begin{enumerate}
 \item
If $\ga_b\ne 0,$ then a primitive axis  $b$ is conjugate to $a$ with
respect to a Miyamoto involution,  iff $\ff$ contains a square root
of $\ga_b.$

\item
If $\ga_b=0,$ let $E_d=\{a+\gr e\mid \gr\in\ff\},$ for $d\in\{a,\one-a\}.$
Then
\[
E_d=\{e_1^{\gt_e}\mid \text{$e$ a primitive idempotent in A}\},
\]
for any $e_1\in E_d.$
\end{enumerate}

\medskip
In summary, all nontrivial idempotents of a 2-generated CPAJ are
primitive axes. We use this in our next paper to address axes and
conjugacy in arbitrary PAJ's.

\section {Miyamoto involutions}\label{Miyfhol0}

In order to treat the structure of PAJ's in greater depth, we need
another topic.

\subsection{Miyamoto involutions in the noncommutative
setting}\label{Miyfhol}$ $
\smallskip

  It is easy to check
that any $\mathbb Z_2$-grading of $A$ induces an
automorphism of $A,$  of order~$2$.  Indeed, if $A=A^+\oplus A^-,$
then $y\mapsto y^+-y^-$ is such an automorphism, where $y\in A,$ and
$y=y^++y^-,\, y^+\in A^+,\, y^-\in A^-.$  So any primitive axis
$a\in A$  of type $(\gl,\gd)$ gives rise to an automorphism $\gt_a
\colon A\to A,$ of order~2, given by
\[
y=\ga_y a+y_0+y_{\gl,\gd}\mapsto \ga_y a+ y_0-y_{\gl,\gd},
\]
which in
accordance with the literature we call the {\bf Miyamoto involution
associated with $a$} (even though in standard algebraic terminology,
an algebraic involution really means an anti-automorphism).

Note for any primitive axis $b$ that  $\tau_a (b)\in \langle\langle a,b
\rangle\rangle;$  $\tau_a (b)=b$  iff $ab=0.$ (Indeed, if
$b_{\gl,\gd} = 0$ then $ab = \ga_b a$, contrary to \cite[Lemma
2.4(ii)]{RoSe1}.) For $\gr\in\Aut(A)$ we write $y^\gr$ for $\gr
(y)$, for any $y\in A$.

\begin{lemma}\label{abovename}
Let $a$ be a primitive axis, and $\gr\in\Aut(A).$  Then
\[
\gt_{a^{\gr}}=\gt_a^{\gr}:=\gr^{-1}\gt_a\gr.
\]
\end{lemma}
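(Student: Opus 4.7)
The plan is to show that $\rho$ carries the $\mathbb Z_2$-grading attached to the axis $a$ onto the grading attached to $a^\rho$, after which the asserted identity on Miyamoto involutions falls out by unwinding definitions.

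First, since $\rho$ is an algebra automorphism with $\rho(a)=a^\rho$, the left and right multiplications by $a^\rho$ intertwine with those by $a$:
\[
L_{a^\rho}=\rho\,L_a\,\rho^{-1},\qquad R_{a^\rho}=\rho\,R_a\,\rho^{-1}.
\]
In particular $a^\rho$ is again a primitive axis of the same type $(\gl,\gd)$ as $a$, and $\rho$ sends each simultaneous eigenspace of $a$ onto the corresponding simultaneous eigenspace of $a^\rho$:
\[
\rho\bigl(A_{\mu,\eta}(a)\bigr)=A_{\mu,\eta}(a^\rho)
\]
for every eigenvalue pair $(\mu,\eta)$. Thus the direct sum decomposition $A=A_1(a)\oplus A_0(a)\oplus A_{\gl,\gd}(a)$ is carried by $\rho$ onto $A=A_1(a^\rho)\oplus A_0(a^\rho)\oplus A_{\gl,\gd}(a^\rho)$.

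Next, for $y\in A$ with unique decomposition $y=\ga_y a+y_0+y_{\gl,\gd}$ as in Notation~\ref{not1}(6), applying $\rho$ termwise produces the unique decomposition of $y^\rho$ with respect to $a^\rho$:
\[
y^\rho=\ga_y\,a^\rho+y_0^\rho+y_{\gl,\gd}^\rho.
\]
Applying the definition of the Miyamoto involution attached to $a^\rho$ then gives
\[
\tau_{a^\rho}(y^\rho)=\ga_y\,a^\rho+y_0^\rho-y_{\gl,\gd}^\rho=\bigl(\ga_y a+y_0-y_{\gl,\gd}\bigr)^\rho=\tau_a(y)^\rho,
\]
so $\tau_{a^\rho}\circ\rho=\rho\circ\tau_a$, which rearranges to the claimed identity $\tau_{a^\rho}=\rho^{-1}\tau_a\rho$ under the convention that conjugation is read in the order compatible with $y^\rho=\rho(y)$.

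The only real subtlety is keeping track of the direction of conjugation; the substantive content of the lemma is the eigenspace compatibility $\rho(A_{\mu,\eta}(a))=A_{\mu,\eta}(a^\rho)$, and that is immediate from the intertwining identities above. No further computation is needed.
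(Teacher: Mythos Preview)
Your proof is correct and follows essentially the same approach as the paper's: both establish that $\rho$ carries the eigenspace decomposition of $a$ onto that of $a^{\rho}$, then read off the conjugacy of the Miyamoto involutions. The paper phrases the last step by matching the $\pm 1$-eigenspaces of the two involutions, whereas you compute $\tau_{a^{\rho}}(y^{\rho})$ directly; the content is the same, and your remark about the conjugation convention correctly handles the only notational pitfall.
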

\begin{proof}
Let $(\gl,\gd)$ be the type of $a.$  Note that for
$\mu\in\{1,0,(\gl,\gd)\},$
\[
x\in A_{\mu}(a^{\gr}) \iff x^{\gr^{-1}}\in A_{\mu}(a).
\]
Let $\gt:=\gt_{a^{\gr}}.$  Then for any $u\in A,$
\begin{gather*}
u^{\gt}=u\iff u\in A_1(a^{\gr})\cup A_0(a^{\gr})\iff u^{\gr^{-1}}\in A_1(a)\cup A_0(a)\\
\iff u^{\gr^{-1}\gt_a}=u^{\gr^{-1}}\iff u^{\gr^{-1}\gt_a\gr}=u.
\end{gather*}
Similarly $u^{\gt}=-u\iff u^{\gr^{-1}\gt_a\gr}=-u.$ Thus
$\gt=\gt_a^{\gr}.$
\end{proof}

Let $\mathcal G(X)$ denote the set of finite products of Miyamoto
involutions of a given set of primitive axes $X$, clearly a group,
called the {\it Miyamoto group} of~$X$. Note that if $X$ is
countable then so is $\mathcal G(X)$.

We write
\[
\Cl(X)=X^{\mathcal{G}(X)}=\{x^{\gr} :\gr\in\mathcal{G}(X)\}.
\]
The set $X$ is {\it closed} if $X=\Cl(X).$

\begin{lemma}\label{abovename1}
\begin{enumerate}\eroman
\item
 $\Cl\Cl(X))=\Cl(X).$ Hence  $\Cl$ is the closure operator in a suitable
topology on the primitive axes of $A$, in which points are closed.

\item
If $\rho \in \mathcal{G}(X)$ then $\langle\langle
{X}^{\rho} \ran\ran = \langle\langle {X} \ran\ran.$
 \end{enumerate}
\end{lemma}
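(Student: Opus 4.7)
The plan is to reduce (i) to the identity $\mathcal{G}(\Cl(X))=\mathcal{G}(X)$, from which idempotency follows at once. Lemma~\ref{abovename} gives $\gt_{x^\gr}=\gr^{-1}\gt_x\gr$ for all $x\in X$ and $\gr\in\mathcal{G}(X)$, and each such conjugate already lies in $\mathcal{G}(X)$, proving $\mathcal{G}(\Cl(X))\subseteq\mathcal{G}(X)$; the reverse containment is immediate from $X\subseteq\Cl(X)$. Then
\[
\Cl(\Cl(X))=\Cl(X)^{\mathcal{G}(\Cl(X))}=\Cl(X)^{\mathcal{G}(X)}=X^{\mathcal{G}(X)\mathcal{G}(X)}=X^{\mathcal{G}(X)}=\Cl(X),
\]
the last equality because $\mathcal{G}(X)$ is a group. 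For the topological postscript I would check monotonicity ($X\subseteq Y$ implies $\mathcal{G}(X)\subseteq\mathcal{G}(Y)$ and hence $\Cl(X)\subseteq\Cl(Y)$) and extensivity ($X\subseteq\Cl(X)$, by taking $\gr=1$); combined with the idempotency just proved, $\Cl$ is a closure operator in the Moore sense, and declaring its fixed points to be the closed sets yields the advertised topology. Points are closed because $a\in A_1(a)$ forces $\gt_a(a)=a$, so $\Cl(\{a\})=\{a\}$.

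For (ii), the plan is to show that every $\gr\in\mathcal{G}(X)$ restricts to an automorphism of $B:=\lan\lan X\ran\ran$; granted this, $\lan\lan X^\gr\ran\ran=\lan\lan\gr(X)\ran\ran=\gr(B)=B$. Since $\gr$ is a product of Miyamoto involutions $\gt_a$ with $a\in X$, it suffices to handle a single $\gt_a$. The remark preceding Lemma~\ref{abovename} records that $\gt_a(b)\in\lan\lan a,b\ran\ran$ for any primitive axes $a,b$, hence $\gt_a(X)\subseteq B$; because $\gt_a$ is an algebra automorphism of $A$, the image $\gt_a(B)$ is the subalgebra generated by $\gt_a(X)$, which sits inside $B$, and a second application of the involution $\gt_a$ upgrades inclusion to equality.

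The main obstacle is one of framing rather than mathematics. The word ``topology'' in (i) is slightly liberal: $\Cl$ need not commute with finite unions (for disjoint singletons $\{a\}$, $\{b\}$ the set $\mathcal{G}(\{a,b\})$ is strictly larger than $\mathcal{G}(\{a\})\cup\mathcal{G}(\{b\})$, so the orbit can grow when we merge), so what we actually produce is the closure system whose closed sets are the fixed points of $\Cl$, from which the topology is then recovered in the standard way. Everything else is bookkeeping: Lemma~\ref{abovename} is doing all the real work in both parts, and the remark ``$\gt_a(b)\in\lan\lan a,b\ran\ran$'' is the only new input needed for (ii).
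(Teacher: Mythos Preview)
Your proof is correct and follows the same line as the paper: both parts hinge on Lemma~\ref{abovename}, from which you extract $\mathcal{G}(\Cl(X))=\mathcal{G}(X)$ for (i) and the fact that each $\gt_a$ (hence each $\gr\in\mathcal{G}(X)$) restricts to an automorphism of $\lan\lan X\ran\ran$ for (ii), exactly as the paper does. Your added remarks on extensivity, monotonicity, closedness of points, and the caveat that $\Cl$ is only a Moore closure (not Kuratowski, since it need not preserve finite unions) go beyond the paper's terse treatment and are welcome clarifications rather than deviations.
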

\begin{proof}
(i) By  Lemma~\ref{abovename}, $\calg(X)=\calg(\Cl(X)),$ so
\[
\Cl(X)=X^{\calg(X)}=\Cl(X)^{\calg(X)}=\Cl(X)^{\calg(\Cl(X))}=\Cl(\Cl(X)).
\]

(ii) Since $\gr\in\Aut(\lan\lan {X} \ran\ran),$ we have $X^{\rho} \subseteq
\lan\lan X \ran\ran,$ so $\lan\lan X^{\rho}
\ran\ran \subseteq \lan\lan X \ran\ran.$
The reverse inclusion is obtained by using $\gr^{-1}.$
\end{proof}

A {\it monomial} in  $X$ is defined inductively: Any primitive axis
$x\in X$ is a monomial of {\it length} 1, and if $h$ is a monomial
of length $k$, and $h'$ is a monomial of length $k',$ then $hh'$ is
a monomial of length $k+k'.$

One of our main tools is
the following theorem, a noncommutative version of a result of
\cite{KMS}.

\begin{thm}\label{KMt}
Suppose $A$ is a  PAJ  generated by a set of primitive axes~$X$.
\begin{enumerate}\eroman
\item
(Generalizing \cite[Corollary 1.2, p.~81]{HRS}.) $A$ is spanned by the set~$\Miy(X)$.

\item
If $V\subseteq A$ is
a subspace containing $X$ such that $xv\in V,$ or $vx\in V,$ for all $v\in V$ and $x\in X,$ then $V=A.$

\item
If $X$ is uniform of type $\{\gl,\gd\}$ (resp.~$(\gl,\gd)$ resp.~$\gl$),
then $A$ is spanned by primitive axes of type $\{\gl,\gd\}$ (resp.~$(\gl,\gd)$ resp.~$\gl$).

\item
Let $a\in A$ be a primitive axis of type $(\gl,\gd).$ Then for $\mu\in\{1,0,(\gl,\gd)\},$ we have
\[
A_{\mu}(a)=\sum\{\ff b_{\mu} : b\in \Miy(X)\}.
\]
For example,
$$A_{0}(a)= \sum \{ \ff b_0 : b \in \Miy(X)\}.$$
\end{enumerate}
\end{thm}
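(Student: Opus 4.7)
The plan hinges on one computation that I will set up first.  For any primitive axis $u\in A$ of type $(\gl,\gd)$ and any $v\in A$, decompose $v=\ga_v u+v_0+v_{\gl,\gd}$ per Notation~\ref{not1}(6).  Using $uv_0=0$, $uv_{\gl,\gd}=\gl v_{\gl,\gd}$, and $v^{\gt_u}=\ga_v u+v_0-v_{\gl,\gd}$, this yields
\[
uv=\ga_v u+\tfrac{\gl}{2}(v-v^{\gt_u}),
\]
and symmetrically $vu=\ga_v u+\tfrac{\gd}{2}(v-v^{\gt_u})$.  Iterating gives $u(uv)=\ga_v u+\gl^2 v_{\gl,\gd}$, which (since $\gl\ne 0,1$) expresses each of $\ga_v u$, $v_0$, $v_{\gl,\gd}$ as an explicit $\gl$-dependent $\ff$-linear combination of $v$, $uv$, $u(uv)$; the same holds on the right in terms of $v,vu,(vu)u$.

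For (i) I will set $V:=\sum_{b\in\Cl(X)}\ff b$ and prove that $V$ is a subalgebra of $A$.  Given two generators $u,v\in\Cl(X)$, the key identity above displays $uv$ as an $\ff$-combination of $u$, $v$, and $v^{\gt_u}$.  By Lemma~\ref{abovename}, $u\in\Cl(X)$ forces $\gt_u\in\calg(X)$, so $v^{\gt_u}\in\Cl(X)\subseteq V$, and hence $uv\in V$.  Bilinearity extends this to all of $V\times V$, so $V$ is a subalgebra containing $X$, forcing $V=\lan\lan X\ran\ran=A$.

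For (ii), I will first observe that for $x\in X\subseteq V$ and $v\in V$ the identities $xv=\ga_v x+\gl v_{\gl,\gd}$ and $vx=\ga_v x+\gd v_{\gl,\gd}$, together with $\ga_v x\in\ff x\subseteq V$ and $\gl,\gd\ne 0$, yield $xv\in V\Leftrightarrow v_{\gl,\gd}\in V\Leftrightarrow vx\in V$; so the per-pair ``or'' in the hypothesis collapses, and $V$ is in fact closed under both left and right multiplication by $X$.  The formulas from paragraph one then place $v^{\gt_x}=v-2v_{\gl,\gd}$ in $V$ for every $x\in X$, making $V$ both $\gt_x$-invariant and $\calg(X)$-invariant; in particular $\Cl(X)\subseteq V$, and (i) completes the argument.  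Part (iii) will be a corollary of (i) together with the observation that any $\gr\in\Aut(A)$ intertwines the eigenspace decompositions of $L_a,R_a$ with those of $L_{a^\gr},R_{a^\gr}$ preserving eigenvalues, so the type of a primitive axis is $\calg(X)$-invariant and uniformity transfers from $X$ to $\Cl(X)$.  Part (iv) is immediate from linearity of the projection $\pi_\mu\colon A\to A_\mu(a)$: applying it to $A=\sum_{b\in\Cl(X)}\ff b$ gives $A_\mu(a)=\sum_{b\in\Cl(X)}\ff b_\mu$.

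The only substantive obstacle is verifying that the spanning set in (i) is closed under multiplication, and this reduces cleanly to the fact that $\gt_u\in\calg(X)$ for every $u\in\Cl(X)$---precisely Lemma~\ref{abovename}---after which everything else is linear bookkeeping on the single noncommutative identity above.
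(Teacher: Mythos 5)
Your proposal is correct and follows essentially the same route as the paper: the heart of both arguments is the identity $uv=\ga_v u+\gl v_{\gl,\gd}$ combined with $v_{\gl,\gd}=\half(v-v^{\gt_u})$ and the conjugation formula $\gt_{u^\gr}=\gr^{-1}\gt_u\gr$ to keep everything inside $\Cl(X)$. Your phrasing of (i) as ``the span of $\Cl(X)$ is a subalgebra'' is just a repackaging of the paper's induction on monomial length, and your treatments of (ii)--(iv) coincide with the paper's.
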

\begin{proof} Recall that $\Miy(X)= X^{\calg(X)}$.

(i) By induction on the length of a monomial in the primitive axes $\Miy(X),$
it suffices to show that  $ab$ is in the span of $\Miy(X),$ for
$a,b\in \Miy(X).$ Write
\[
b=\ga_b a+b_{0}+b_{\gl,\gd}.
\]
Then $ab=\ga_b a+ \gl b_{\gl,\gd}.$  But
 $b_{\gl,\gd}= \half(b-b^{\gt_a}).$

(ii)
Let $a\in X$ be of type $(\gl,\gd),$ and $v\in V.$  Then $av=\ga_v a +\gl v_{\gl,\gd},$
and $va=\ga_v a+\gd v_{\gl,\gd}.$  Thus, by hypothesis,
$v_{\gl,\gd}\in V.$
But then $v^{\gt_{a}}=v-2v_{\gl,\gd}\in V.$  Thus $V^{\gt_a}=V.$
As this holds for all $a\in X,$ we see that $\Cl(X)\subseteq V,$ so by (i), $V=A.$

 (iii) $x^{\gr}$ has the same type as $x$, for any automorphism
$\gr$, so apply (i).

(iv) This follows, decomposing $A$ into eigenspaces, since $\Miy(X)$
spans~$A$.
\end{proof}

As a corollary we get

\begin{cor}\label{same type}
Suppose $A$ is generated by $X$ and that $X$ is uniform.
If $X=X_{\{\gl,\gd\}}$ (resp.~$X=X_{\gl}$), then
all the primitive axes in $A$ are of type $(\gl,\gd)$
or $(\gd,\gl)$ (resp.~of type $\gl$ or $1-\gl$).
\end{cor}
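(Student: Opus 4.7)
The plan is to fix an arbitrary primitive axis $c\in A$ of type $(\gl_c,\gd_c)$ and force this type to lie in the prescribed list by a direct eigenspace comparison of $c$ against a suitably chosen $b\in\Cl(X)$.

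For the setup, by Theorem~\ref{KMt}(i) we have $A=\mathrm{span}\,\Cl(X)$, and since Miyamoto involutions are automorphisms of $A$ and automorphisms preserve the type of a primitive axis, every $b\in\Cl(X)$ is a primitive axis of the same type as some $x\in X$. Because $c^2=c\ne 0$, we cannot have $cb=0$ for every $b\in\Cl(X)$, so I fix one with $cb\ne 0$, say of type $(\gl,\gd)$. Decomposing $b$ by $c$'s grading and $c$ by $b$'s grading,
\[
b=\alpha c+b_0+b',\qquad c=\beta b+c_0+c',
\]
with $b_0\in A_0(c)$, $b'\in A_{\gl_c,\gd_c}(c)$, $c_0\in A_0(b)$, $c'\in A_{\gl,\gd}(b)$, and computing $cb$ and $bc$ via each side yields
\[
\alpha c+\gl_c b'=\beta b+\gd c',\qquad \alpha c+\gd_c b'=\beta b+\gl c',
\]
whose difference is the key relation $(\gl_c-\gd_c)b'=(\gd-\gl)c'$.

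I then split on the type of $c$. If $c$ has commutative type while $X$ is noncommutatively uniform, then $\gl_c=\gd_c$ but $\gl\ne\gd$, so the relation forces $c'=0$; then $c\in\ff b+A_0(b)$, and $c^2=c$ combined with $A_1(b)\cdot A_0(b)=0$ forces $\beta\in\{0,1\}$, each of which contradicts either $cb\ne 0$ or the distinctness of types of $c$ and $b$. If $c$ has noncommutative type, then $\gl_c+\gd_c=1=\gl+\gd$, and combining the sum of the two displayed equations with the $L_b$-eigenvalue condition $L_b(c')=\gl c'$ applied to the expression for $c'$ extracted from the relations, together with the idempotence $c^2=c$, ultimately pins $\gl_c$ down to $\{\gl,\gd\}$, giving $(\gl_c,\gd_c)\in\{(\gl,\gd),(\gd,\gl)\}$. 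In the parenthetical case $X=X_\gl$, the algebra $A$ is itself commutative (each $x\in X_\gl$ has $L_x=R_x$, hence commutes with every element of $A$), and the analogous idempotence/fusion-rule analysis narrows the commutative type of $c$ to $\gl$ or $1-\gl$.

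The hard part will be the final algebraic squeeze in the noncommutative branch, where ruling out $\gl_c\notin\{\gl,\gd\}$ requires feeding the subtraction relation, the sum relation, and the $L_b$-eigenvalue equation on $c'$ simultaneously into $c^2=c$ and extracting a contradiction from the linear independence of $b$ and $c$. A conceptually cleaner alternative would be to pass to the $2$-generated subalgebra $\lan\lan b,c\ran\ran$ of $A$, in which $c$ remains a primitive axis of type $(\gl_c,\gd_c)$ and $b$ a primitive axis of type $(\gl,\gd)$, and invoke the classification of $2$-generated primitive axial algebras from \cite{RoSe1,RoSe2,RoSe3}, which constrains the possible type-pairs of two primitive axes jointly generating a PAJ.
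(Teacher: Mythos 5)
Your skeleton matches the paper's: since $c^2=c\ne 0$ and $A$ is spanned by $\Cl(X)$ (Theorem \ref{KMt}(i)), whose members inherit their types from $X$ under automorphisms, there must be some $b\in\Cl(X)$ with $cb\ne 0$, and the claim then reduces to a statement about the two axes $b$ and $c$. The paper settles that two-axis statement by quoting the classification of $2$-generated algebras --- \cite[Examples 2.6]{RoSe1} in the noncommutative case and \cite[Proposition 2.12(vi)]{RoSe1} in the commutative case --- in the contrapositive form ``incompatible types force $cb=0$.'' Your closing ``conceptually cleaner alternative'' (pass to $\lan\lan b,c\ran\ran$ and invoke the $2$-generated classification) is exactly this, and it is the only part of your proposal that actually closes the argument.

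The primary, hands-on route has genuine gaps. First, the decisive step --- ``ultimately pins $\gl_c$ down to $\{\gl,\gd\}$'' in the noncommutative branch, and ``the analogous idempotence/fusion-rule analysis narrows the commutative type of $c$ to $\gl$ or $1-\gl$'' --- is asserted, not performed; in the commutative case this is precisely the content of \cite[Proposition 2.12(v)\,\&\,(vi)]{RoSe1}, whose proof needs the structure constants of the $2$-generated algebra (the element $\gs$ and the scalar $\gc$), not merely the relation $(\gl_c-\gd_c)b'=(\gd-\gl)c'$. Second, in the mixed branch the claim that ``$c^2=c$ combined with $A_1(b)\cdot A_0(b)=0$ forces $\beta\in\{0,1\}$'' tacitly assumes that $c_0^2$ has no component along $b$, i.e.\ that $A_0(b)^2\subseteq A_0(b)$; in this paper that is Theorem \ref{A02}, proved only later via the Frobenius form (and with an exceptional case), so it is not available here --- the fusion rules alone only give $c_0^2\in\ff b+A_0(b)$. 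Third, even granting $\beta=1$, the asserted contradiction with ``the distinctness of types'' requires a further computation (one can in fact extract $b_0=b'=0$ and hence $b=c$, but you do not do this). As written, the proposal is a correct plan whose core computation is missing; substituting the two-generated classification, as you suggest in your last sentence, recovers the paper's proof.
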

\begin{proof}
We start with the case where $X=X_{\{\gl,\gd\}}.$
Set $T:=\{(\gl,\gd),\, (\gd,\gl)\}.$
Let $a$ be a primitive axis in $A.$  Suppose that the type of $a$  is not in $T.$
Then, by \cite[Examples 2.6]{RoSe1}, $ab=0,$ for any primitive axis $b\in A$
whose type is in $T.$  But by Theorem \ref{KMt}(i), $A$ is spanned by
$\Cl(X),$ and of course $\Cl(X)=\Cl(X)_{\{\gl,\gd\}}.$
Hence $aA=0,$ which is impossible since $a^2=a.$

The same argument works when $X=X_{\gl}$ using  \cite[Proposition 2.12(vi)]{RoSe1}
and Theorem \ref{KMt}(i).
\end{proof}

In Theorem \ref{U1}(3) below we get more precise information when
${X=X_{\gl,\gd}.}$

\section{The axial graph}

Let $A$ be a PAJ. Recall that $\chi(A)$ denotes the set of {\it all}
primitive axes of $A.$
We continue to assume that $X$ is a collection of
primitive axes.  As in \cite[\S 1.2.1]{HSS}, define the {\it axial
graph}  on~$\chi(A)$ to be the graph whose vertex set is
$\chi(A),$ and whose edges connect primitive axes $x'$ and $x''$ when
$x'x'' \ne 0$. (Note that since any primitive axis is of Jordan
type, $x'x'' \ne 0$ if and only if $x''x' \ne 0.$)   Given a subset $X\subseteq\chi(A),$
when we discuss the graph of $X,$ we mean the full subgraph induced on $X.$
Many of the following results are
mild generalizations of \cite[\S 6]{HSS}.  By ``component''
we mean ``connected component.''

\begin{lemma}\label{gr11}
Let $A = \lan\langle
X\rangle\rangle.$
\begin{enumerate}\eroman
\item
For any two
primitive axes $a$ and $b$, either $a b^{\gt_a}\ne 0 $ or $b
b^{\gt_a}\ne 0 $; hence, for $a, b \in \chi(A)$  with
$ab \ne 0$, the distance between $b$ and $ b^{\gt_a}$ is at most~2.

\item
For any connected subset $X' \subseteq \chi(A),$ $\Cl(X')$ is also
connected.

\item If  $a,b\in \chi(A)$ are in the same
component, then  $b$ has either the same type of $a$ or the
complementary type, cf.~Definition~\ref{hom}.

\item
If $X$ is connected, then $\chi(A)$ is connected.

\item
Any  component of $\Cl(X)$ is closed.
\end{enumerate}
 \end{lemma}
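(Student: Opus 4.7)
The plan is to dispatch the five parts essentially in order, using (i) as the recurring tool for (ii) and (v), and invoking previously proved structure results for (iii) and (iv).

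For part (i), the key identity is that $\gt_a$ is an algebra automorphism fixing $a$, so $a b^{\gt_a}=(ab)^{\gt_a}$; since $\gt_a$ is invertible, this gives $ab=0$ iff $ab^{\gt_a}=0$. If $ab\neq 0$ we are done. Otherwise $ab=0$ forces $\ga_b=0$ and $b_{\gl,\gd}=0$ (these being the components of $b$ in the two eigenspaces on which $L_a$ is injective), so $b=b_0\in A_0(a)$, hence $b^{\gt_a}=b$ and $bb^{\gt_a}=b^2=b\neq 0$. When $ab\neq 0$ the path $b-a-b^{\gt_a}$ has length $\le 2$.

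For (ii) and (v) I would use the same induction. Given $x\in X'$ (resp.\ $c\in C$) and $\gs=\gt_{a_1}\cdots\gt_{a_n}$ with each $a_i\in X'$ (resp.\ in $C$), induction on $n$ reduces to the one-step case: by part (i), either $a_ix=x$ (making $x^{\gt_{a_i}}=x$) or the length-$2$ path $x-a_i-x^{\gt_{a_i}}$ connects them, and both intermediate vertices lie in $\Cl(X')$ (resp.\ in the component $C$, using Lemma~\ref{abovename} to see $\calg(C)\subseteq\calg(\Cl(X))=\calg(X)$, so $x^{\gt_{a_i}}\in\Cl(X)$). Combining this with connectedness of $X'$ gives (ii); iterating inside $C$ gives (v).

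For (iii), I would reduce to the case where $a,b$ are adjacent, that is $ab\neq 0$; a general path reduces to this step by step. When $ab\neq 0$ the two-generated subalgebra $\lan\lan a,b\ran\ran$ is a PAJ, and the classification in \cite[Examples 2.6, Proposition 2.12]{RoSe1}, used already in the proof of Corollary~\ref{same type}, forces the types of $a$ and $b$ to agree or to be complementary; this is the one step that actually relies on outside work, and is the main obstacle in the lemma.

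For (iv), suppose for contradiction that $X$ is connected but $a\in\chi(A)$ lies in a different component from $X$. Then $a$ is non-adjacent to every primitive axis in the component $C$ containing $X$, so $ay=0$ for every $y\in C$. By (ii), $\Cl(X)$ is connected and contains $X$, so $\Cl(X)\subseteq C$; thus $ay=0$ for every $y\in \Cl(X)$. Since $\Cl(X)$ spans $A$ by Theorem~\ref{KMt}(i), we conclude $aA=0$, contradicting $a^2=a\neq 0$.
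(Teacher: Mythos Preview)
Your argument is correct and follows essentially the same route as the paper's proof: the same induction on the length of $\gs$ for (ii) and (v), the same appeal to \cite{RoSe1} for (iii), and the same spanning argument via Theorem~\ref{KMt}(i) for (iv). Your treatment of (i) via the identity $ab^{\gt_a}=(ab)^{\gt_a}$ is actually a bit cleaner than the paper's direct computation of $ab^{\gt_a}=\ga_b a-\gl b_{\gl}$, though both reach the same dichotomy.

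One slip to fix: in the one-step case of (ii)/(v) you write ``either $a_ix=x$ (making $x^{\gt_{a_i}}=x$)''. This should read ``either $a_ix=0$''; the condition $a_ix=x$ would force $x=a_i$ by primitivity and does not cover all cases where $x^{\gt_{a_i}}=x$. What you actually established in (i) is that $a_ix=0$ implies $x\in A_0(a_i)$ and hence $x^{\gt_{a_i}}=x$, and that $a_ix\ne 0$ implies $a_ix^{\gt_{a_i}}\ne 0$, giving the path $x$--$a_i$--$x^{\gt_{a_i}}$. With that correction the induction goes through. Also note that in the inductive step the current vertex lies in $\Cl(X')$ rather than $X'$ itself, but your argument applies verbatim to any $x\in\Cl(X')$.
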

 \begin{proof}
(i) Suppose  $a$ is of type $\gl$. If $b^{\gt_a}\ne b$ then $b =
\ga_b a + b_0 + b_{\gl},$ with $b_{\gl}\ne 0$,  so $a b^{\gt_a}
=\ga_b a - \gl b_{\gl}\ne 0$. Likewise for $a$  of type $(\gl,\gd).$
\medskip

(ii) Let $b\in X',$ and $\gr\in \mathcal{G}(X'),$ so that $b^{\gr}$
is a typical element in  $\Cl(X').$  Now $\gr$ is a
product $ \gt _{x_1}\dots \gt _{x_m},$ for $x_i \in X'$. We claim
that $b^{\gt _{x_1}}$ is in the same   component as $b$. This is
clear unless $b^{\gt _{x_1}}\ne b$. Then $b x_1\ne 0 ,$ so the claim
follows from (i), taking $x_1 = a.$

By induction on $m$, $b^\gr$ is in the same component as $b.$
\medskip

(iii)
Suppose that $a$ has
type $\gl.$ Of course we may assume that $ab\ne 0,$ so $b$ is of
type $\gl$ or $1-\gl$, by \cite[Proposition 2.12(vi)]{RoSe1}. The
noncommutative case is by \cite[Examples 2.6]{RoSe1}.

(iv)  Suppose that $X$ is connected.  By Theorem \ref{KMt}(i),
$\Cl(X)$ span $A.$  By (ii), $\Cl(X)$ is connected.
Let $a\in\chi(A).$  Then $ax'\ne 0,$ for some $x'\in\Cl(X),$
otherwise $aA=0,$ which is false.  Hence $\chi(A)$ is connected.
\medskip

(v) Let $X'$ be a component of $\Cl(X).$  Then, by (ii), $\Cl(X')$
is connected and is contained in $\Cl(X).$
But $X'\subseteq \Cl(X'),$ and any connected set containing
a component equals the component.
\end{proof}

We say that an element $y\in A$ is {\it annihilating} if $Ay = 0$
(note that $Ay=0$ iff $yA=0,$ because $A$ is spanned by primitive
axes and $ay=0\iff ya=0,$ for a primitive axis $a.$)

\begin{thm}[The axial decomposition] \label{miya1}
 Let $A = \langle\langle
X\rangle\rangle,$ with $\{X_i:i \in I\}$  the components of
$\Miy(X)$, and $A_i = \langle\langle X_i\rangle\rangle$.
\begin{enumerate}\eroman
\item $ A = \sum A_i.$
\item For each
$i\ne j,$ $ A_i A_j=0,$ and  $ A_i \cap A_j$ is annihilating.

\item  If $\sum _i y_i = 0$ for $y_i\in A_i,$ then each
$y_i$ is annihilating in $A.$

\end{enumerate}
\end{thm}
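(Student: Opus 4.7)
My plan is to combine Theorem~\ref{KMt}(i) with the closedness of components from Lemma~\ref{gr11}(v). For part (i), Theorem~\ref{KMt}(i) spans $A$ by $\Cl(X) = \bigcup_{i \in I} X_i$, and since each $X_i \subseteq A_i$ we obtain $A = \sum_i A_i$ at once.

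For (ii), the first step will be to show that $A_i$ is itself spanned by the set $X_i$. Since $\mathcal{G}(X_i) \le \mathcal{G}(X)$ and $X_i$ is invariant under $\mathcal{G}(X)$ by Lemma~\ref{gr11}(v), we get $\Cl(X_i) = X_i$ when computed inside $A_i$; applying Theorem~\ref{KMt}(i) to $A_i = \langle\langle X_i \rangle\rangle$ then yields that $A_i$ is spanned by $X_i$. Bilinearity reduces $A_iA_j = 0$ to showing $xy = 0$ for $x \in X_i$, $y \in X_j$ with $i \ne j$; but $x$ and $y$ lie in distinct components of the axial subgraph on $\Cl(X)$, which means no edge connects them, i.e.\ $xy = 0$. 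For the intersection claim, any $z \in A_i \cap A_j$ will annihilate every $A_k$ (use $z \in A_j$ and $A_jA_k = 0$ when $k \ne j$; use $z \in A_i$ and $A_iA_j = 0$ when $k = j$), hence $zA = 0$.

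For (iii), assume $\sum_i y_i = 0$ with $y_i \in A_i$ and fix $j$. By (ii), $y_j A_k \subseteq A_j A_k = 0$ for every $k \ne j$, while multiplying the relation on the right by $A_j$ gives $y_j A_j = -\sum_{i \ne j} y_i A_j \subseteq \sum_{i \ne j} A_i A_j = 0$. Thus $y_j A = 0$, as required. The only delicate point in the entire argument is the identification $A_i = \operatorname{span}(X_i)$, which hinges on reading Lemma~\ref{gr11}(v) correctly---that no Miyamoto involution generated from $X_i$ can carry an axis in $X_i$ outside $X_i$---and once that is in place the proof is essentially combinatorial bookkeeping riding on the fact that primitive axes in different components of the axial graph multiply to zero.
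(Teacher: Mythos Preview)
Your proof is correct and follows essentially the same approach as the paper: both hinge on Theorem~\ref{KMt}(i) together with Lemma~\ref{gr11}(v) to get $A_i = \operatorname{span}(X_i)$, then use $x_ix_j=0$ for axes in distinct components, and finish (iii) by the same substitution trick $y_j = -\sum_{i\ne j} y_i$. One small remark: Lemma~\ref{gr11}(v) literally asserts only that $X_i$ is closed under $\mathcal{G}(X_i)$, not under all of $\mathcal{G}(X)$; the stronger invariance you state is still true (since $\tau_x$ fixes $X_i$ pointwise whenever $x\in X_j$ with $j\ne i$), but you only need the weaker statement anyway.
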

\begin{proof}
(i) By Lemma~\ref{gr11}(v), and Theorem \ref{KMt}(i), $A_i$ is spanned by $X_i.$ Furthermore
${\Cl(X) = \bigcup X_i}$, so
$A = \sum _{x\in\Cl(X)} \ff x = \sum _{i\in I} \sum _{x\in X_i} \ff x
= \sum A_i.$

(ii) By definition $x_ix_j = 0$ for each $x_i\in X_i$, $\ x_j\in
X_j$.
 Since $ A_i $  is spanned by
$X_i,$ we obtain the first part of (ii);
the latter part follows from (i).

(iii) $A_j y_i = 0 $ for $j\ne i.$ Since $\sum y_i = 0$ we have $y_i
= - \sum _{j\ne i} y_j \in \sum _{j\ne i} A_j,$ so $A_i y_i
\subseteq \sum _{j\ne i} A_i A_j = 0,$ by (ii), implying $Ay_i = 0$
by (i).
\end{proof}

One can cull more information.  But first a remark.

\begin{remark}\label{B}
\item
Uniformly  2-generated CPAJ's are described in
\cite[Proposition~4.1]{HRS}, denoted $B(\gl, \varphi)$ there. These
are spanned by primitive axes $a$ and~$b$ of type $\gl$ and the
element  $\gs $ satisfying $ab = \gs +\gl(a+b),$ $a\gs = \gc a,$
$b\gs = \gc b,$ and $\gs ^2 = \gc \gs,$ where $\gc = (1-\gl)\varphi
- \gl.$ It follows  that the algebra  $B(\gl, \varphi)$ is uniquely
determined up to isomorphism by $\gl$ and~$\gc.$
\end{remark}

\begin{lemma}\label{gr1}
Suppose $a,b$ are primitive axes, with $a$ of type $\gl$
and $A: =\langle\langle a,b \rangle\rangle$ commutative. Then  $b^{\gt_a}$ is
a neighbor of $b$ unless   $2w$ is the multiplicative unit of $A$,
where
\[
\textstyle{w: =  \half( b + b^{\gt_a})=\ga_b a + b_0,}
\]
in which case $A$ is the algebra $B(\half,\half)$ of
 \cite{HRS}.
 \end{lemma}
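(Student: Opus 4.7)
Decompose $b$ along the $L_a$-eigenspaces as $b=\alpha a+b_0+b_\lambda$ (where $\alpha:=\alpha_b$), so that $b^{\tau_a}=\alpha a+b_0-b_\lambda$, and hence $w=\half(b+b^{\tau_a})=\alpha a+b_0$ while $b_\lambda=\half(b-b^{\tau_a})$. By commutativity,
\[
b\cdot b^{\tau_a}=(w+b_\lambda)(w-b_\lambda)=w^2-b_\lambda^2.
\]
Since $\tau_a$ is an automorphism, both $b$ and $b^{\tau_a}$ are idempotents; expanding $b^2=b$ and $(b^{\tau_a})^2=b^{\tau_a}$ and then adding (resp.~subtracting) yields
\[
w^2+b_\lambda^2=w, \qquad 2\,wb_\lambda=b_\lambda.
\]
In particular $(2w)^2=2w+2\,b\cdot b^{\tau_a}$, so $b\cdot b^{\tau_a}=0$ is equivalent to $2w$ being an idempotent of $A$; together with the first identity this forces $w^2=b_\lambda^2=\half w$.

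Assume from now on that $b\cdot b^{\tau_a}=0$, so $2w$ is idempotent. Immediately $(2w)\cdot b=b^2+b\cdot b^{\tau_a}=b$ and likewise $(2w)\cdot b^{\tau_a}=b^{\tau_a}$, and moreover $(2w)\cdot a=2\alpha a+2b_0a=2\alpha a$. Thus the key task is to show $\alpha=\half$, for then $2w$ will coincide with the identity on the generators $a,b$ of $A$, and hence on all of $A$.

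To nail down $\alpha$ (and $\lambda$) I will invoke the \cite{HRS} classification of commutative $2$-generated primitive axial algebras of Jordan type: $A$ is a quotient of $B(\lambda,\varphi)$ for some $\varphi$, and a direct computation using $ab=\sigma+\lambda(a+b)$ together with the decomposition $b=\alpha a+b_0+b_\lambda$ gives $\sigma=\bigl((1-\lambda)\alpha-\lambda\bigr)a-\lambda b_0$, so that $\varphi=\alpha$ and $\gc=(1-\lambda)\alpha-\lambda$. The relation $\sigma^2=\gc\sigma$ then forces $b_0^2=-(\gc/\lambda)\,b_0$. Substituting into $w^2=\half w$ and equating $\ff a$- and $A_0(a)$-components gives
\[
\alpha^2=\tfrac12\alpha, \qquad -\gc/\lambda=\tfrac12.
\]
The first equation gives $\alpha\in\{0,\half\}$; if $\alpha=0$ then $\gc=-\lambda$, and the second equation forces $\lambda=0$, contradicting $\lambda\notin\{0,1\}$. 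Hence $\alpha=\half$, and then $\gc=\half-\tfrac{3\lambda}{2}=-\lambda/2$ yields $\lambda=\half$. This pins down $A\cong B(\half,\half)$ and simultaneously shows $(2w)\cdot a=a$, so $2w$ is the identity of $A$.

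The main subtlety lies in the last paragraph: extracting $\lambda=\alpha=\half$ requires using the structural relations of $B(\lambda,\varphi)$ (essentially the information $b_0^2\in A_0(a)$, which is the redundant HRS axiom later reproved as Theorem G). Degenerate sub-cases must be briefly dispatched — if $b_0=0$ then $b=\alpha a+b_\lambda$ leads, via $b^2=b$ and the $\zz_2$-grading $A_\lambda^2\subseteq A_1(a)+A_0(a)$, to $\alpha\lambda=\half$ and then $b\cdot b^{\tau_a}=0$ forces $\lambda=1$, contradiction — so in every surviving case $A$ is genuinely $B(\half,\half)$, completing the proof.
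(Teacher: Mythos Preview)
Your opening is identical to the paper's: decompose $b=\ga_b a+b_0+b_\gl$, compute $b\,b^{\gt_a}=w^2-b_\gl^2$, and from $b^2=b$ extract $w=2w^2$ and $b_\gl=2wb_\gl$. From there the two arguments diverge in how they rule out $\ga_b=0$ and pin down $\gl$. The paper uses the elementary fact $A_0(a)^2\subseteq A_0(a)$ (\cite[Corollary~2.11]{RoSe1}) to split $w=2w^2$ into $2\ga_b^2=\ga_b$ and $2b_0^2=b_0$; it then eliminates $\ga_b=0$ by a direct eigenspace argument (if $\ga_b=0$ then $bA\subseteq\ff b_0+\ff b_\gl$, forcing three independent $L_b$-eigenvectors into a $2$-dimensional space). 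Only \emph{after} establishing $\ga_b=\half$ and $2w=\one$ does the paper compute $\gs=ab-\gl'a-\gl b$ with $\gl'$ the (a priori unknown) type of $b$, match coefficients against $\one=2w$, and conclude $\gl'=\half$, whence $\gl=\half$ by \cite[Proposition~2.12(iv)]{RoSe1}.

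Your route through $\gs^2=\gc\gs$ to get $b_0^2=-(\gc/\gl)b_0$ is perfectly legitimate, but you have silently assumed that $b$ has the \emph{same} type $\gl$ as $a$ when you write $ab=\gs+\gl(a+b)$. The lemma does not assume this: $b$ could have type $\gl'=1-\gl$. With the correct general formula $\gs=ab-\gl'a-\gl b$ one has $\gc=(1-\gl)\ga_b-\gl'$, and then your elimination of $\ga_b=0$ via ``$-\gc/\gl=\half$'' yields $\gl'=\gl/2$; combined with $\gl'\in\{\gl,1-\gl\}$ this leaves the residual possibility $\gl=\tfrac{2}{3}$, $\gl'=\tfrac{1}{3}$, which your argument does not dispatch. (It \emph{is} excluded, e.g.\ by Proposition~\ref{c+}(3)(vi), which forces $\ga_b=1-\half\gl\ne 0$ in that case, or by the paper's eigenspace argument --- but you need to say so.) Once $\ga_b=\half$ is secured, your computation adapts cleanly to general $\gl'$: $\gc=\half(1-\gl)-\gl'=-\gl/2$ gives $\gl'=\half$, and then $\gl=\half$. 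A minor slip: when $\ga_b=0$ and $\gl'=\gl$, the equation $-\gc/\gl=\half$ reads $1=\half$, an immediate contradiction in characteristic $\ne 2$, not ``$\gl=0$'' as you wrote.
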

 \begin{proof} We may assume that $b^{\gt_a}\ne b$, so $b_{\gl}\ne 0,$ and $ab \ne 0.$  We are done unless  $b  b^{\gt_a}= 0.$
  Then $b = w + b_\gl,$ so
\begin{equation}
b b^{\gt_a} = (w + b_\gl)(w- b_\gl)  = w^2- b_\gl^2 .
\end{equation}
Hence $w ^2  = b_\gl^2,$ implying
$b = b^2 = 2w^2 + 2w b_\gl$,  so matching components in
$A_1(a)+A_0(a)$ and $A_\gl(a)$ using the fusion rules,
\[
w=2w^2\text{ and }b_\gl =2w b_\gl.
\]
Now  $A_0^2(a) \subseteq A_0(a)$ by
\cite[Corollary 2.11]{RoSe1}, so, matching components in
 $$\ga_b a+b_{0}= 2(\ga_b ^2 a+b_0^2 ) $$ shows that
$$2b_0 ^2 =b_0  ,\qquad 2 \ga_b ^2=
 \ga_b ,$$ so $ \ga_b \in \{ 0,\half\}.$

If $\ga_b = 0$ then $b = b_0 + b_\gl.$ Then
\[
0 = bb^{\gt_a} = b(b_0 -b_\gl),
\]
implying
$b b_0 = b b_\gl.$ Then $b = b^2 = b( b_0+b_\gl) = 2bb_0,$ so $bb_0 = \half b = b b_\gl.$
Hence $bA = b( \ff a +\ff b_0 + \ff b_\gl) \subseteq\ff b a + \ff b b_\gl = \ff b_0 + \ff b_\gl,$
because $ba\in\ff b_{\gl},$ and $bb_0=bb_{\gl}\in\ff b\subseteq \ff b_0 + \ff b_\gl.$  Hence the
two-dimensional space $\ff b_0 + \ff b_\gl$ contains the two
eigenvectors of $b$ having nonzero eigenvalues, as well the
$0$-eigenvector  $b_0 - b_\gl,$ a contradiction.

Thus $\ga_b = \half$. Hence $2w$ is the unit element of $A$, since
\[
(2w)a = 2\ga_b a = a,\qquad (2w)w = 2 w^2 = w, \qquad (2w) b_{\gl}
= b_{\gl}.
\]

To conclude, let $b$ have type $\gl'$. We need to show that $\gl =
\gl' = \half.$ Recall from \cite[Proposition 2.10(1)]{RoSe1} that
\begin{gather*}
\textstyle{\gs : = ab - \gl' a - \gl b = \half a+\gl b_{\gl}- \gl' a- \frac{\gl}{2}a -\gl b_0-\gl b_{\gl}}\\
\textstyle{=(\half-\gl'-\frac{\gl}{2})a-\gl b_0.}
\end{gather*}
satisfies $\frac{\gs} \gc
 = \one = 2w = 2 b_0+a,$ where $\gc = \half(1-\gl) - \gl'.$ Matching coefficients of $b_0$ shows $-\frac
 \gl \gc
 = 2,$ so $$ -\gl = 2 \gc = (1-\gl) - 2\gl',$$ implying $\gl' =
 \half$ and thus also $\gl =
 \half,$ by \cite[Proposition 2.12(iv)]{RoSe1}.
\end{proof}

We would like the sum in Theorem~\ref{miya1}(i) to be a direct
product, but this need not be the case:

\begin{example}\label{non}$ $
\begin{enumerate}\eroman
\item
The algebra  $B(\gl,\gvp)$ (see Remark \ref{B}) is $2$-dimensional precisely when $ab=0$ (in which case
$\gs=-\gl(a+b),$ and $\gvp=0$)  or
$\gl\in\{-1,\half\}$ and $\gs=0.$

Let $A=\lan\lan a,b'\ran\ran$ be a CPAJ such that $a,b'$ are
primitive axes, $ab'\ne 0,$ and $a\ne b'.$  If $\dim A=2,$ then by
\cite[Theorem A and Proposition~C]{RoSe1}, $A\cong B(\gl,\gvp),$ and
the type of $a$ equals the type of $b'.$

Otherwise, by \cite[Theorem 2.16]{RoSe3}, $\dim A=3,$ and by
\cite[Theorem B]{RoSe1}, $A$ is one of the CPAJ's of \cite{HRS}.
Suppose $a$ is of type~$\gl.$ By \cite[Proposition
2.12(iii)\&(vi)]{RoSe1} (see also Remark \ref{gen} below), if $\gl\ne -1,$ then one can find a
primitive axis $b$ of type $\gl$, with $A = \langle\langle
a,b\rangle\rangle.$  When $a$ is of type~$-1,$ then $b'$ is of type~
$2$ and one can find a primitive axis $b$ of type $2,$ with $A =
\langle\langle b,b'\rangle\rangle.$

Suppose $\dim B(\gl,\gvp)=3.$
When $\gc \ne 0,$ $\frac 1{\gc} \gs$ is the unit element of $B(\gl,
\varphi)$, which we denote as $\one =\one_{B(\gl, \varphi)}.$ In
this case $\one - a $ is a primitive axis of type $1-\gl$ in $B(\gl,
\varphi),$ so  when $\gl\ne 2,$ $B(\gl, \varphi)= B(1-\gl, \varphi') $ for some
$\varphi'$, leading to a slight ambiguity.

The remaining case is when $\dim(B(\gl,\gvp))=3,$ and $\gc = 0.$ In this case
 $\gl\in \{\half, -1\}$, which does occur, as explained in
\cite[Theorem~1.1]{HRS}. Then $\gs A =0,$ i.e., $\gs$ is
annihilating.

\item
(An example of a CPAJ $\bar A =  \langle\langle \bar X \rangle\rangle $
having  two subalgebras generated by disjoint components of $\Cl(\bar X)$, with nontrivial intersection.)
 Suppose $B(\gl',\varphi')= \langle\langle a',b'\rangle\rangle  $ and $ B(\gl'',
\varphi'')= \langle\langle a'',b''\rangle\rangle $ are 3-dimensional
CPAJ's which have $\gc' = \gc'' = 0$, i.e, the  respective elements
$\gs'$ and $\gs''$ have square zero. Let $A =B(\gl', \varphi')
\times B(\gl'', \varphi'')$ and $\bar A = A/I,$ where $I$ is the
ideal generated by $\gs'- \gs''.$ Let $\bar X$ be the set of images
$\{\bar a',\bar b', \bar a'',\bar b''\}$, so $\Miy(\bar X)$ has
disjoint
 components  $\Miy(\bar a',\bar b' )$ and
$\Miy(\bar a'',\bar b'')$.  $\bar A =  \langle\langle \bar X
\rangle\rangle$ is a CPAJ
 in which  $\bar \gs'= \bar\gs'' \in
\overline{B(\gl', \varphi')} \cap \overline{B(\gl'', \varphi'')}.$
(As noted in (i), one could have different types $\gl'=\half$ and
$\gl''=-1$.)

\item
The noncommutative situation will be treated in depth in
\S\ref{nonPAJ}, but now let us quickly recall from \cite[Examples
2.6]{RoSe1} that there are two fundamental cases:

In Example 2.6(1), $\dim A =2$ and all primitive axes are of the same type.

In Example 2.6(2), $\dim A = 3$ and $A$  cannot be generated by primitive axes
of the same type, although $A$  is uniformly generated. In this
situation we can construct the same kind of example as in (ii), in
which the algebras of disjoint components intersect at a nonzero
annihilating ideal.
\end{enumerate}
   \end{example}

\begin{prop}\label{A032}
Let $\{X_i : i \in I\}$  be the
components of $\Miy(X)$. Then for any $j$, $N_j:=\langle\langle
X_j\rangle\rangle \cap \langle\langle \Miy(X)\setminus
X_j\rangle\rangle )$ is an annihilating ideal of $A:=\langle\langle
X\rangle\rangle $, and
$$ A/N_j \cong \langle\langle
X_j\rangle\rangle /N_j \ \times \ \langle\langle X\setminus
X_j\rangle\rangle/N_j.$$
\end{prop}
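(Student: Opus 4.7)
The plan is to leverage Theorem~\ref{miya1}, which gives $A=\sum_i A_i$ with $A_i:=\langle\langle X_i\rangle\rangle$ satisfying $A_iA_{i'}=0$ for $i\ne i'$, and forces any relation $\sum y_i=0$ (with $y_i\in A_i$) to make each $y_i$ annihilating. Everything else reduces to bookkeeping around this decomposition.

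First I would identify $B_j:=\langle\langle \Miy(X)\setminus X_j\rangle\rangle$ with $\sum_{i\ne j}A_i$: monomials crossing two components vanish by Theorem~\ref{miya1}(ii), so every monomial in $\bigcup_{i\ne j}X_i$ lies in some single $A_i$. The same identity should hold with $\langle\langle X\setminus X_j\rangle\rangle$ in place of $B_j$; the subtle point is that $A_i=\langle\langle X\cap X_i\rangle\rangle$ for every $i$. This uses that $\gt_x$ fixes any axis outside the component of $x$ (since $\gt_x(b)=b\iff xb=0$, and $xb=0$ for axes in different components), so when one expands $a^{\gr}$ for $a\in X\cap X_i$ and $\gr=\gt_{x_1}\cdots\gt_{x_m}\in\mathcal G(X)$, only the factors with $x_k\in X\cap X_i$ act nontrivially. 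Combined with Theorem~\ref{miya1}(i) this yields $A=A_j+B_j$.

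Next I would show $N_j=A_j\cap B_j$ consists of annihilating elements: for $y\in N_j$, write $y=\sum_{i\ne j}y_i$ with $y_i\in A_i$; setting $z_j:=y$ and $z_i:=-y_i$ for $i\ne j$ gives $\sum z_i=0$, so Theorem~\ref{miya1}(iii) makes $y$ annihilating. Since annihilating elements absorb all products, $N_j$ is automatically an ideal.

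Finally the isomorphism is $\psi:A_j/N_j\times B_j/N_j\to A/N_j$, $(\overline a,\overline b)\mapsto\overline{a+b}$. Well-definedness and surjectivity follow from $N_j\subseteq A_j\cap B_j$ and $A=A_j+B_j$; injectivity follows because $a+b\in N_j\subseteq A_j$ forces $b\in A_j\cap B_j=N_j$ and hence $a\in N_j$; multiplicativity follows because $A_jB_j=\sum_{i\ne j}A_jA_i=0$ kills the cross terms $a_1b_2$ and $b_1a_2$. The main obstacle is the bridging identity $\langle\langle X\setminus X_j\rangle\rangle=\langle\langle \Miy(X)\setminus X_j\rangle\rangle$, which is what makes the statement even well-posed (the second factor must contain $N_j$); everything after that is essentially the second isomorphism theorem for vector spaces, rescued as an algebra map by the orthogonality $A_jB_j=0$.
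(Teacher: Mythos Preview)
Your proof is correct and follows the same route as the paper's: both deduce that $N_j$ is annihilating directly from Theorem~\ref{miya1}(iii), after which the direct-product isomorphism is the routine consequence of $A=A_j+B_j$ together with $A_jB_j=0$. Your treatment is considerably more explicit than the paper's two-line argument; in particular, your verification that $\langle\langle X\setminus X_j\rangle\rangle=\langle\langle \Miy(X)\setminus X_j\rangle\rangle=\sum_{i\ne j}A_i$ (via the observation that $\tau_x$ fixes axes outside the component of $x$) is a point the paper leaves entirely implicit but which is indeed needed for the second quotient in the displayed isomorphism to make sense.
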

\begin{proof} $N_j \triangleleft A$ is annihilating by Theorem~\ref{miya1}(iii), and the
next assertion follows at once.
\end{proof}

\subsection{The strong axial graph}$ $
\medskip

Each  component of the axial graph can be refined further.

\begin{Def}
Define the {\it strong axial
graph}  of~$\chi(A)$ to be the graph whose vertex set is again
 $\chi(A)$, and whose edges connect axes $x'$ and $x''$
if $x'=x''$ or $\dim \langle\langle x',x'' \rangle\rangle = 3.$
We then say that $x'$ is {\it strongly connected} to $x''.$

A subset  $X'\subseteq \chi(A)$ is strongly connected if it is connected in
the strong axial graph.

An axis $a$ is {\it strongly connected} to a subset $
X'\subseteq\chi(A)$ if $a$ is strongly connected to some primitive
axis of~$X'$.

Suppose that $A$ is $n$-generated.  For any generating set
$Y\subseteq \chi(A)$ of $A$ of size $n$ we let $Y_1, Y_2,\dots, Y_k$
be the strong connected components of $Y$ (as a full subgraph of the
strong axial graph),  with $|Y_1|\ge |Y_2|\ge\dots\ge |Y_k|\ge 1.$
If~$Y'$ and $Y'_1, Y'_2,\dots Y'_{k'}$ are as above, we write
$Y>>Y'$ if for the first $i$ such that $|Y_i|\ne |Y'_i|,$ we have
$|Y_i| > |Y'_i|.$  $Y$ is a {\it good generating set of size $n,$}
if it is maximal in the above order relation $>>.$
\end{Def}

\begin{thm}\label{strongcon}
Let $ A =\langle\langle X \rangle\rangle$ where $X$
is a good generating set of $A$ of size $n.$  Let $X_1,\dots,X_k$
be the strong connected components of $X.$
Then $ A = \sum
\langle\langle X_j \rangle\rangle,$  so $\dim A \le \sum \dim
\langle\langle X_j \rangle\rangle.$
\end{thm}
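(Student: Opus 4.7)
Set $V := \sum_{j=1}^k \langle\langle X_j\rangle\rangle$, so that $X\subseteq V\subseteq A$. By Theorem~\ref{KMt}(ii) it suffices to show $xV\subseteq V$ for every $x\in X$. Expanding $V$ as a sum, this reduces to proving $x\langle\langle X_j\rangle\rangle\subseteq V$ for every $x\in X$ and every $j\in\{1,\dots,k\}$; the case $x\in X_j$ is trivial, so fix $x\in X_i$ with $i\ne j$.

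Apply Theorem~\ref{KMt}(i) to the sub-PAJ $\langle\langle X_j\rangle\rangle$ generated by $X_j$: its Miyamoto-closure $\Cl(X_j)$ spans $\langle\langle X_j\rangle\rangle$, so it suffices to verify $xw\in V$ for every primitive axis $w\in \Cl(X_j)$, say $w=y^\rho$ with $y\in X_j$ and $\rho\in\mathcal{G}(X_j)$. Consider the $2$-generated PAJ $B:=\langle\langle x,w\rangle\rangle$, which has dimension at most $3$. If $x=w$, then $xw=x\in V$. If $x\ne w$ and $\dim B\le 2$, then $\{x,w\}$ is a basis of $B$, whence $xw\in \ff x+\ff w\subseteq \langle\langle X_i\rangle\rangle+\langle\langle X_j\rangle\rangle\subseteq V$.

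The remaining case is $\dim B=3$, i.e., that $x$ and $w$ are strongly connected. Here we aim to contradict the goodness of $X$ by exhibiting a generating set $X^*\subseteq \chi(A)$ of cardinality $n$ with $X^*>>X$. The natural candidate is the swap $X^*:=(X\sminus\{y\})\cup\{w\}$: since $\rho\in\mathcal{G}(X_j)$ and $y=w^{\rho^{-1}}$, Lemma~\ref{abovename1}(ii) lets us recover $y$ inside $\langle\langle X^*\rangle\rangle$, so $X^*$ still generates $A$. Because $\dim\langle\langle x,w\rangle\rangle=3$, the axes $x$ and $w$ now lie in a common strong component of $X^*$, merging $X_i$ together with $w$ into a strictly larger strong component — producing $X^*>>X$, contradicting maximality of $X$ in the $>>$-ordering. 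This forces $\dim B\le 2$ in the first place, and the previous paragraph then yields $xw\in V$.

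The main obstacle is executing this substitution rigorously. The delicate point is that $\rho$ may involve $\gt_y$ itself, so $\rho^{-1}\notin\mathcal{G}(X^*)$ a priori; one must rewrite $\rho$ — using $\gt_y(y)=y$ and the conjugation identity $\gt_{a^\rho}=\gt_a^\rho$ of Lemma~\ref{abovename} — until it is expressed in terms of $\mathcal{G}(X_j\sminus\{y\})\subseteq\mathcal{G}(X^*)$. One must also check that the multiset of strong-component sizes of $X^*$ really beats that of $X$ lexicographically, for which it helps that Miyamoto involutions are automorphisms and thus preserve the dimensions of $2$-generated subalgebras (hence strong connectivity). Once the hard case is ruled out we conclude $V=A$, and then $\dim A\le\sum_j\dim\langle\langle X_j\rangle\rangle$ is immediate from subadditivity of dimension over a sum.
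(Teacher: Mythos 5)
Your reduction is sound and follows the paper's skeleton: via Theorem~\ref{KMt}(i)--(ii) it suffices to show that axes lying in the Miyamoto closures of distinct strong components $X_i,X_j$ are never strongly connected, and one argues by contradiction with the goodness of $X$. The gap is in how you execute the swap. You form $X^*=(X\sminus\{y\})\cup\{w\}$ with $w=y^{\gr}$, $\gr\in\calg(X_j)$. First, as you yourself flag but do not resolve, it is unclear that $X^*$ generates $A$: Lemma~\ref{abovename1}(ii) licenses replacing a \emph{whole} set by its image under an element of its \emph{own} Miyamoto group, not replacing a single element, and the proposed rewriting of $\gr$ in terms of $\calg(X_j\sminus\{y\})$ is neither carried out nor obviously possible. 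Second, and fatally even if generation were granted, $X^*>>X$ can simply fail: deleting $y$ may fragment the strong component $X_j$ into several smaller pieces, and $>>$ compares the \emph{sorted} sequence of component sizes starting from the largest. For instance, if $|X_j|=5$ is the unique largest component, $|X_i|=3$, and $X_j\sminus\{y\}$ splits into two pieces of size $2$, then $X^*$ has sorted sizes $(4,2,2)$ against $(5,3)$ for $X$, so $X>>X^*$ and no contradiction with maximality arises.

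The paper avoids both problems by deleting nothing: it replaces the entire component $X_j$ by $X_j^{\gr}$ (and likewise $X_k$ by its image under the relevant element of $\calg(X_k)$), so that both strongly connected axes land inside the new generating set. These are automorphic images, hence still strongly connected sets of the same cardinality, and Lemma~\ref{abovename1}(ii) applies verbatim to preserve generation. Then two components of unchanged sizes merge, the new partition of $n$ into component sizes is a strict coarsening of the old one, and a strict coarsening is always lexicographically larger, giving $X'>>X$ and the desired contradiction. Your contradiction step needs to be restructured along these lines; the rest of your argument (the case $\dim\lan\lan x,w\ran\ran\le 2$ giving $xw\in\ff x+\ff w\subseteq V$, and the appeal to Theorem~\ref{KMt}(ii)) is fine.
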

\begin{proof}
We may assume that  $|X_1|\ge |X_2|\ge\dots\ge|X_k|.$
We claim that if $a\in \Cl(X_j)$ and $b\in \Cl(X_k)$ for $j< k$,
then $ab \in \ff a + \ff b$, i.e., $a$ and $b$ are not strongly
connected. We prove the claim by showing that the negation, that $a$
and $b$ are strongly connected, leads to a contradiction. Write $a =
{x}^{\gr}$ for $x\in X_j$ and $\gr \in \calg (X_j).$
 We may replace
$X_j$ by $X_j^\gr,$  by Lemma~\ref{abovename1}(ii) and still obtain
a generating set of $A$ of size $n.$ Now write $b = \gr'(x')$ for
$x' \in X_k$ and $\gr' \in \calg(X_k).$ In view of
Lemma~\ref{abovename1}(ii), we may replace $X_k$ by~
$X_k^{{\gr'}^{-1}}.$  After these replacements we obtain a
generating set $X'$ of $A$ of size~$n,$ with $X'>> X.$ Indeed
$X_j^{\gr}\cup\{b\}$ is strongly connected. This contradicts the
maximality of $X.$

The claim implies that $ \sum \langle\langle X_j \rangle\rangle$ is
multiplicatively closed (by means of~Theorem~\ref{KMt}(i)) and
contains $X$, so
 we see that $ \sum \langle\langle X_j
\rangle\rangle =A .$
\end{proof}

\section{The structure of noncommutative PAJ's}\label{dePAJ}$ $

In this section we strengthen Theorem~\ref{miya1}, showing that for
any noncommutative component $X_i$, $A_i = \langle\langle
X_i\rangle\rangle$ is a factor in a direct product.

The following lemma is very useful.

\begin{lemma}\label{e}
Suppose $A=\lan\lan X\ran\ran$ and let $e\in A$ be an idempotent.
Then
\begin{enumerate}
\item
If $a\in\ff e +A_0(e)+A_{\gl,\gd}(e),$ (we allow $\gl=\gd$) for all
$a\in\Cl(X),$ then $A=\ff e+A_0(e)+A_{\gl,\gd}(e).$

\item
Assume that (1) holds and for any $a, b\in\Cl(X)$ we have
\begin{equation}\label{eqe}
a_0b_0, a_{\gl,\gd}b_{\gl,\gd}\in \ff e+A_0(e),\qquad a_0b_{\gl,\gd}, b_{\gl,\gd}a_0\in A_{\gl,\gd}(e),
\end{equation}
where $a=\ga_a e+a_0+a_{\gl,\gd}$ and $b=\ga_b e+b_0+b_{\gl,\gd},$ are the decompositions
of $a, b$ with respect to $e.$

If $L_eR_e=R_eL_e,$  then $e$ is a primitive axis in $A.$
\end{enumerate}
\end{lemma}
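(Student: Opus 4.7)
The plan is as follows. Part (1) is immediate: the subspace $V := \ff e + A_0(e) + A_{\gl,\gd}(e)$ contains $\Cl(X)$ by hypothesis, and $\Cl(X)$ spans $A$ by Theorem~\ref{KMt}(i), so $V = A$.

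For part (2) I would first upgrade the sum from (1) to a direct sum and then read off the eigenspace decomposition for $e$. The three summands $\ff e$, $A_0(e) = A_0(L_e)\cap A_0(R_e)$, and $A_{\gl,\gd}(e) = A_\gl(L_e)\cap A_\gd(R_e)$ lie in the $L_e$-eigenspaces for the pairwise distinct eigenvalues $1, 0, \gl$, so the sum $A = \ff e \oplus A_0(e) \oplus A_{\gl,\gd}(e)$ is direct. Given the unique decomposition $y = \alpha_y e + y_0 + y_{\gl,\gd}$ for any $y\in A$, applying $L_e$ and comparing components against the direct sum forces $A_1(L_e) = \ff e$, $A_0(L_e) = A_0(e)$, and $A_\gl(L_e) = A_{\gl,\gd}(e)$; the symmetric argument handles $R_e$. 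In particular (\ref{J}) holds since $A_{\gl,0}(e) = A_{\gl,\gd}(e)\cap A_0(e) = 0$ and similarly $A_{0,\gd}(e) = 0$; combined with $A_1(L_e) = \ff e = A_1(R_e)$ and the assumed commutation $L_e R_e = R_e L_e$, this shows $e$ is a primitive two-sided axis of putative type $(\gl,\gd)$.

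It remains to verify the fusion rules of the $\mathbb{Z}_2$-grading (\ref{dec1}) with $0$-part $\ff e + A_0(e)$ and $1$-part $A_{\gl,\gd}(e)$. Products involving $e$ itself are automatic from the eigenspace identifications, so the content reduces to $A_0(e)^2,\, A_{\gl,\gd}(e)^2 \subseteq \ff e + A_0(e)$ and $A_0(e)\cdot A_{\gl,\gd}(e) + A_{\gl,\gd}(e)\cdot A_0(e) \subseteq A_{\gl,\gd}(e)$. Here I will use hypothesis (\ref{eqe}) together with bilinearity: since $\Cl(X)$ spans $A$ by Theorem~\ref{KMt}(i) and the projections along the direct sum are linear operators, any $u \in A_0(e)$ can be written $u = \sum_i \beta_i a^{(i)}_0$ with $a^{(i)}\in\Cl(X)$, and similarly any $v \in A_{\gl,\gd}(e)$ is $\sum_j \gamma_j b^{(j)}_{\gl,\gd}$. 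Expanding products termwise and applying (\ref{eqe}) then yields the required inclusions.

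The only technical point is the bookkeeping when promoting (\ref{eqe}) from generators to all of $A$; once the direct sum is established this is a purely formal consequence of bilinearity, so I do not anticipate any serious obstacle beyond tidy linear algebra.
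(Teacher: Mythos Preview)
Your proposal is correct and follows essentially the same approach as the paper's proof: both use Theorem~\ref{KMt}(i) for part~(1), and for part~(2) both observe that since $\Cl(X)$ spans $A$, the linear projections onto $A_0(e)$ and $A_{\gl,\gd}(e)$ reduce the fusion rules to the hypothesis~\eqref{eqe} on generators. You are in fact more careful than the paper in spelling out the directness of the sum, the identification of the $L_e$- and $R_e$-eigenspaces, and the verification of~\eqref{J} and primitivity; the paper leaves these implicit and jumps straight to the fusion rules.
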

\begin{proof}
Part (1) holds as by Theorem \ref{KMt}(i), $A$ is spanned by
$\Cl(X).$ For part~(2) note that as $A$ is spanned by $\Cl(X),$
using (1) we see that any element in $A_{\gl,\gd}(e)$ can be written
as $\sum_{x\in\Cl(X)}\ga_x x_{\gl,\gd},$ where this is a finite sum
and $\ga_x\in \ff.$  Similarly the same holds for any element in
$A_0(e).$  So to check the fusion rules it suffices to check that
equation \eqref{eqe} holds.  Thus if $L_eR_e=R_eL_e,$ then
$e$ is a primitive axis in $A.$
\end{proof}

\subsection{Uniformly generated noncommutative  PAJ's} \label{nonPAJ}$ $
\medskip

First we classify all non-commutative PAJ's, starting from
\cite[Example~2.6]{RoSe1}.

\begin{remark}\label{ugen}
By \cite[Theorem 2]{RoSe2} every noncommutative 2-generated
PAJ is either {\cite[Example~2.6(1)]{RoSe1}}, defined via
multiplication
\[
ab = \gd a + \gl b; \qquad ba = \gd b + \gl a,
\]
with $\gl\ne\gd,$ and $\gl + \gd = 1$, or  the ``exceptional'' axial algebra,
\cite[Example~2.6(2)]{RoSe1}, to be described presently in greater
detail. These examples show that {\it every connected set of
noncommutative primitive axes is uniform.}
\end{remark}

Let us elaborate.

\begin{example}[{The ``exceptional'' axial algebra, \cite[Example~2.6(2)]{RoSe1}}]\label{try1}

Let  $B:=A_{\operatorname{exc},3}(\{a,b\},\gl)$ denote the 3-dimensional
algebra spanned by primitive  axes $a,b$ and an element $y,$ where
\[
ab = ay= yb =\gl y ,\quad ba = ya =  \gd y=by,\quad y^2=0,
\]
for $\gd = 1-\gl, \gd\ne\gl.$

Note the decomposition $b = (b-y) +y,$ so $b_0 = b-y$.
 Hence  $b_0^2
  = (b-y)^2 = b - (\gl+\gd)y = b_0.$

In fact, it was shown in \cite[Example~2.6(2)]{RoSe1} that all
 idempotents of~$B$ are either  $a+b-y$, or the primitive axis  $a + \mu y,$ of type
$(\gl,\gd)$, or the primitive axis $b + \mu y$ of type $(\gd,\gl)$, where
$\mu\in \ff$. Note that
\[
\textstyle{b^{\gt_a} = b-2y, a^{\gt_b}=a-2y,\text{ so }y = \half(b-b^{\gt_a})=\half(a-a^{\gt_b}).}
\]
The  idempotent $a+b-y$ is the multiplicative unit   since
$$(a+b-y)y = \gl y + \gd y +y^2 = y+0 =y,$$ $$(a+b-y)b = \gl y + b -\gl
y = b ,\qquad (a+b-y)a = a+ \gd y  -\gd y = a  .$$ While $B$ is
uniformly generated, its subalgebra $\ff a + \ff y$ only has primitive axes of
type $(\gl,\gd)$, whereas $\ff b + \ff y$  only has primitive axes of type
$(\gd,\gl)$.
   \end{example}

\subsubsection{The noncommutative uniformly generated PAJ\, $\uu_E(\gl)$}

\begin{Def}\label{uuI}
$\uu_E (\gl)$ ($\gl \ne 0,1$)  is the algebra  having as a basis the set of idempotents
$E:= \{ e_i: i \in I\}$, with multiplication given by $e_ie_j = \gd e_i + \gl e_j,$ for each $i,j\in I$, where $\gd = 1 - \gl.$
As we shall see in Lemma \ref{p1}
below, $e_i$ is a primitive axis, for all $i\in I,$ so $U_E(\gl)$ is a PAJ.

When $|E| = n,$ we write $\uu_n(\gl)$ for $\uu_E (\gl)$.
\end{Def}

\begin{lemma}\label{p1}
Let $A = \uu_E(\gl)$. For any $a\in E$,
\[
A_{\gl,\gd}(a)= \sum_{i,j\in I} \ff (e_i - e_j)=\sum _{i\in I}\ff (e_i - a),
\]
so $A_{\gl,\gd}(a)$ is a square-zero ideal.
$A = A_{\gl,\gd}(a) \oplus \ff a,$ and $A/A_{\gl,\gd}(a) \cong \ff a
\cong \ff,$ implying $A$ is local and $A_{0}(a)= 0.$ In particular,  if $Az = 0$ for $z \in A,$ then $z
= 0$.

The nonzero idempotents of $\uu_E(\gl)$ all are $\{ a +z: z \in
A_{\gl,\gd}(a)\}$, and these are all primitive axes. In particular
the elements of $E$ are primitive axes.  If $e,f\in \uu_E(\gl)$
are idempotents, then $ef=\gd e+\gl f$ and $e-f\in A_{\gl,\gd}(e)=A_{\gl,\gd}(a).$
\end{lemma}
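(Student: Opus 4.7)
The plan is to set up the eigenspace decomposition of $A$ with respect to $a$ from the defining relations, and then bootstrap this structure to an arbitrary idempotent.

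First I would compute directly from $e_ie_j = \gd e_i + \gl e_j$ that
\[
a(e_i - a) = (\gd a + \gl e_i) - a = \gl(e_i - a), \qquad (e_i - a)a = \gd(e_i - a),
\]
placing each $e_i - a$ in $A_{\gl,\gd}(a)$. Since $\{e_i\}$ is a basis of $A$, every element has the form $\ga a + \sum_i \gb_i(e_i - a)$, so $A = \ff a + \sum_i \ff(e_i - a)$, and the sum is direct because its summands lie in distinct eigenspaces of $L_a$. Matching this with the eigenspace decompositions of $L_a$ and $R_a$ forces $A_1(a) = \ff a$, $A_0(a) = 0$, and $A_{\gl,\gd}(a) = \sum_i \ff(e_i - a) = \sum_{i,j} \ff(e_i - e_j)$ (using $e_i - e_j = (e_i-a)-(e_j-a)$). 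Since $L_a$ and $R_a$ act as scalars on each summand of this direct sum, they commute, so $a$ is a primitive axis of type $(\gl,\gd)$.

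Next I would verify that $A_{\gl,\gd}(a)$ is a square-zero ideal: expanding $(e_i - a)(e_j - a)$ and using $\gl + \gd = 1$ yields $0$, so $A_{\gl,\gd}(a)^2 = 0$; together with eigenspace invariance and the direct sum $A = \ff a \oplus A_{\gl,\gd}(a)$ this makes $A_{\gl,\gd}(a)$ a two-sided ideal with quotient $\ff$, whence $A$ is local. The no-annihilator assertion then falls out: if $Az = 0$, write $z = \ga a + w$ with $w \in A_{\gl,\gd}(a)$; then $0 = az = \ga a + \gl w$, and directness of the sum forces $z = 0$.

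For the classification of idempotents I would write $f = \ga a + w$ and exploit $aw + wa = (\gl+\gd)w = w$ together with $w^2 = 0$ to obtain $f^2 = \ga^2 a + \ga w$. Matching with $f$ forces $\ga \in \{0,1\}$, yielding the parametrization $f = a + w$ for $w \in A_{\gl,\gd}(a)$. To show such an $f$ is itself a primitive axis, I replay the first paragraph: since $(e_i - a)w \in A_{\gl,\gd}(a)^2 = 0$, I get $e_iw = \gl w$ and $we_i = \gd w$, hence
\[
f(e_i - f) = \gl(e_i - f), \qquad (e_i - f)f = \gd(e_i - f).
\]
The basis decomposition $A = \ff f \oplus \sum_i \ff(e_i - f)$ then gives primitivity, and $A_{\gl,\gd}(f) \subseteq A_{\gl,\gd}(a)$ is forced to be an equality by equal codimension (equivalently, by uniqueness of the maximal ideal in the local algebra $A$). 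The last identity $ef = \gd e + \gl f$ for nonzero idempotents $e = a + w$, $f = a + w'$ is a one-line expansion using $ww' = 0$. Conceptually the only subtlety is this transfer of eigenspace structure from the distinguished axis $a$ to a general idempotent, which works precisely because they differ by a square-zero element of the unique maximal ideal $A_{\gl,\gd}(a)$.
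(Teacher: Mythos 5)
Your proof is correct and follows essentially the same route as the paper: the same eigenvector computations for $e_i-a$, the same expansion showing $(e_i-a)(e_j-a)=0$, the same classification of idempotents via $f^2=\ga^2a+\ga w$, and the same transfer of the eigenspace structure to $a+w$ using $A_{\gl,\gd}(a)^2=0$. The only cosmetic difference is that you deduce $L_aR_a=R_aL_a$ from the simultaneous-eigenspace decomposition rather than by the paper's explicit computation with $\gd^2+\gl=\gl^2+\gd$; both are valid.
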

\begin{proof}
 Take $a = e_1$. Then $e_j^2 = e_j=\gd e_j + \gl e_j,$ and for each
$i\ne 1$,
\begin{align*}
a (x_i -a) &= \gd a+\gl x_i-(\gd a+\gl a) = \gl (x_i-a),\\
(x_i-a)a&=\gd x_i+\gl a-(\gd a+\gl a)=\gd(x_i-a),
\end{align*}
proving $\sum _{i\in I} \ff (x_i - a)\subseteq A_{\gl,\gd}(a),$
 and  has codimension $1$,
so $A_{\gl,\gd}(a) = \sum _i \ff (x_i - a)$ and $A = \ff a \oplus
A_{\gl,\gd}(a).$ Furthermore
\[
(x_i -a)(x_j-a) = x_ix_j - (x_ia+
ax_j) +a = \gd x_i + \gl x_j - (\gd x_i +  \gl x_j)  = 0,
\]
showing that $A_{\gl,\gd}(a)$ is a square-zero ideal and
yielding the fusion rules.

Next, since  $\gd^2+\gl=\gl^2+\gd:$
\begin{align*}
&L_aR_a(e_i)=a(e_ia)=a(\gd e_i+\gl a)=\gd(\gd a+\gl e_i)+\gl a\\
&=(\gd^2+\gl)a+\gd\gl e_i=(\gl^2+\gd)a+\gl\gd e_i=\gd a+\gl(\gd e_i+\gl a)\\
&=(\gd a+\gl e_i)a=(ae_i)a=R_aL_a(e_i).
\end{align*}
As this holds for all $i,$ we have $L_aR_a=R_aL_a.$
Thus $a$ is a primitive axis.

To see when $y =\nu  a + z$ is idempotent, where $z\in A_{\gl,\gd}(a),$  compute
\[
y^2 = \nu^2 a + \nu(az + za) = \nu^2 a + \nu(\gl z + \gd z) =  \nu^2 a +\nu z.
\]
So $\nu = 1,$ and then any $z$ works.

Take the idempotent $y=a+z$. Then
for any $z'$ in $A_{\gl,\gd} (a)$,
\[
yz'=(a+z)z' = az' = \gl z',\quad z'(a+z)=z'a=\gd z'.
\]
Thus $A_{\gl,\gd}(y)=A_{\gl,\gd}(a).$  Also
\[
(a+z)(a+z')=a+\gl z'+\gd z=\gd (a+z)+\gl (a+z').
\]
Thus, as above $L_yR_y=R_yL_y,$ and $y$ is a primitive axis.
It is easy to check that $e-f\in A_{\gl,\gd}(e).$
\end{proof}

We want the converse.

 \begin{thm}\label{U1}
Let $A$ be a PAJ generated by a set of primitive axes $X$ of type $(\gl, \gd)$ for $\gl \ne \gd.$
Then $A$ is spanned by $X.$ Furthermore
\begin{enumerate}
\item
If $X$ is connected then $A$ has a basis $\calb\subseteq X$ such that
$A=\uu_{\calb}(\gl).$  Moreover,
if $a, b\in \Cl(X),$ then $ab\ne 0.$

\item
Let $\calb\subseteq X$ be a basis of $A,$ and let $\{\calb_i\mid i\in I\}$
be the connected components of $\calb.$  For each $i\in I,$ let
$X_i$ be the connected component of $\Cl(X)$ containg $\calb_i,$
and let $A_i:=\lan\lan X_i\ran\ran.$
Then $\calb_i$ is a basis of $A_i,$ and $A_i=\uu_{\calb_i}(\gl).$
Furthermore
\[
\textstyle{A=\prod_{i\in I} A_i=\prod_{i\in I}\uu_{\calb_i}(\gl).}
\]

\item
Let $a\in A$ be a nonzero idempotent.  Then $a$ is a primitive axis
in $A$ iff $a\in A_i,$ for some $i.$  In particular, all axes in
$A$ are of type $(\gl,\gd).$
\end{enumerate}
\end{thm}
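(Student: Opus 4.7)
My plan is to settle the spanning first, prove Part~(1), and then leverage it together with Theorem~\ref{miya1} and Lemma~\ref{p1} to obtain Parts~(2) and~(3). To show $A=\operatorname{span}(X)$: by Remark~\ref{ugen}, whenever $a,x\in X$ satisfy $ax\ne 0$ one has $ax=\gd a+\gl x$ and $xa=\gd x+\gl a$, so relative to $a$ the axis $x$ decomposes as $x=a+(x-a)$ with $x-a\in A_{\gl,\gd}(a)$ and vanishing $0$-component; hence $x^{\gt_a}=2a-x\in\operatorname{span}(X)$, while if $ax=0$ then $x^{\gt_a}=x$. Thus $\operatorname{span}(X)$ is stable under each $\gt_a$ ($a\in X$), contains $\Cl(X)$, and equals $A$ by Theorem~\ref{KMt}(i).

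The crux is Part~(1), specifically the sub-claim that any two distinct elements $x,y$ of a connected $X$ satisfy $xy\ne 0$, proved by induction on graph distance. In the essential distance-$2$ case, take $x,z,y\in X$ with $xz,zy\ne 0$ and suppose $xy=0$; then $y\in A_0(x)$ and $z-x\in A_{\gl,\gd}(x)$, while Remark~\ref{ugen} gives $y(z-x)=yz=\gd y+\gl z$. Fusion demands this lie in $A_{\gl,\gd}(x)$, but applying $L_x$ yields $\gl\gd x+\gl^2 z$ rather than $\gl(\gd y+\gl z)=\gl\gd y+\gl^2 z$, forcing $x=y$, a contradiction. Paths of distance $\ge 3$ reduce to this via a common neighbor. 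Once the sub-claim is established, a maximal linearly independent $\calb\subseteq X$ is a basis of $A$ (by the spanning) with $ef=\gd e+\gl f$ for distinct $e,f$, so $A=\uu_\calb(\gl)$; the ``moreover'' follows from Lemma~\ref{p1}, which computes $ef=\gd e+\gl f\ne 0$ for any two distinct idempotents of $\uu_\calb(\gl)$, and every element of $\Cl(X)\subseteq\chi(A)$ is such an idempotent.

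For Part~(2), Theorem~\ref{miya1} gives $A=\sum_j A'_j$, where $A'_j=\lan\lan X'_j\ran\ran$ for the components $X'_j$ of $\Cl(X)$, with pairwise intersections annihilating. By Part~(1), each $A'_j\cong\uu_{E_j}(\gl)$, and Lemma~\ref{p1} rules out nonzero annihilating elements, so the sum is direct: $A=\prod_j A'_j$. Each $b\in\calb$ then lies in a unique $A'_j$ (from the disjointness of the $X'_j$), giving a partition $\calb=\bigsqcup_j(\calb\cap X'_j)$ with nonempty parts (else $\calb$ could not span $A$), and each part is a complete induced subgraph by the ``moreover'' of Part~(1). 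Hence the $\calb\cap X'_j$ are precisely the components $\calb_i$ of $\calb$, with $X_i=X'_j$ accordingly, and the identity $\sum|\calb_i|=\dim A=\sum\dim A_i$ together with $|\calb_i|\le\dim A_i$ forces $|\calb_i|=\dim A_i$, so $\calb_i$ is a basis of $A_i$ and $A_i=\uu_{\calb_i}(\gl)$.

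For Part~(3): write a nonzero idempotent $a=\sum a_i$ with $a_i\in A_i$; each $a_i$ is idempotent and, when nonzero, a primitive axis of $A_i$ with $A_0(a_i)=0$ inside $A_i$ by Lemma~\ref{p1}. Then $A_1(L_a)=\bigoplus_{a_i\ne 0}\ff a_i$, which is $\ff a$ iff exactly one $a_i$ is nonzero, i.e.~$a\in A_i$. Conversely, any nonzero idempotent $a\in A_i$ is primitive of type $(\gl,\gd)$ in $A_i$ by Lemma~\ref{p1}, and the direct-product structure promotes it to a primitive axis of the same type in $A$; this also yields the ``In particular''. The main obstacle is the distance-$2$ fusion contradiction in Part~(1); the rest is essentially a bookkeeping application of Theorems~\ref{KMt}, \ref{miya1} and Lemma~\ref{p1}.
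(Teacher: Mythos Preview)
Your approach is essentially identical to the paper's: the spanning (the paper does it by direct monomial induction, you via $\gt_a$-stability, but both are immediate), the distance-$2$ fusion contradiction for Part~(1), and the use of Theorem~\ref{miya1} and Lemma~\ref{p1} for Parts~(2) and~(3) all match.

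One technical caveat in Part~(2): your dimension count $\sum|\calb_i|=\dim A=\sum\dim A_i$ together with $|\calb_i|\le\dim A_i$ forcing equality is only valid when all the dimensions involved are finite, and the theorem makes no such assumption. The fix is immediate and is what the paper does: since $A=\bigoplus_j A_j$ and each $b\in\calb$ lies in some $A_j$, projecting the $\calb$-expansion of any $v\in A_i$ onto the $A_i$-summand shows $\calb_i$ spans $A_i$ directly (equivalently, as in the paper, any $x\in X_i$ written in the basis $\calb$ must have vanishing $\calb\setminus\calb_i$ coefficients, since that part lies in $(A_i)_0(b)=0$ for $b\in\calb_i$).
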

\begin{proof}
Note that by Remark \ref{ugen} and by Example \ref{try1},
$ab \in\{ 0, \gd a +\gl b\},$ for all  $a,b\in X,$
so $A$ is spanned by $X,$ by induction on the length of a monomial in $X.$
\medskip

(1)
Let $a, b\in \Cl(X).$  By Lemma \ref{gr11}(ii), $\Cl(X)$ is connected.
Suppose  that $ab = 0.$  We may assume that
there exists $c\in \Cl(X),$ with $ac\ne 0\ne bc.$
Thus $ac = \gd a + \gl c,$ and
$c b= \gd c + \gl b.$ Suppose $\ga a+\gb b+\gc c=0,\ \ga,\gb,\gc\in\ff.$
Multiplying by $a$ we get $\ga a+\gc ac=0.$  Since $a,c$ are linearily independent,
we see that $\ga=0=\gc,$ so $\gb=0,$ and $a,b,c$ are linearly independent.

Now  $c-a\in A_{\gl,\gd}(a),$ by Lemma \ref{p1}.
Since $b\in A_0(a),$  the fusion rules imply that $cb=(c-a)b\in A_{\gl,\gd}(a).$
But $cb=\gd c+\gl b,$ so $\gd ac=a(cb)=\gl cb.$  This is impossible
since $a,b,c$ are linearly independent.

Hence $X$ contains a basis $\calb$ such that $A =\uu_{\calb}(X).$

\medskip
(2)
Let $x_j\in X_j,$ and write
\[
\textstyle{x_j=\sum_{x\in \calb_j}\ga_x x+\sum_{x\in \calb\sminus \calb_j}\ga_x x.}
\]
Suppose that $\ga_{x'}\ne 0,$ for some $x'\in\calb\sminus \calb_j.$
Then
\[
\textstyle{0\ne v:=\sum_{x\in \calb\sminus \calb_j}\ga_x x\in  A_j.}
\]
By (1), $A_j:=\uu_{\calb'}(\gl),$ for some basis $\calb'$ of $A_j,$
and $v\in A_0(b),$ for each $b\in\calb_j.$
But by Lemma \ref{p1}, $A_0(b)=0,$ a contradiction.

Hence $X_j$ is contained in the span of $\calb_j,$ and so $\calb_j$ is a basis of $A_j,$
and $A_j=\uu_{\calb_j}(\gl).$ Now $A=\sum_{i\in I}A_i,$ and since, by Lemma \ref{p1},
$A_i$ contains no annihilating elements, $A=\prod_{i\in I}A_i,$ by Theorem \ref{miya1}.
\medskip

(3)
Clearly $a=\sum_{i\in I}a_i,$ where $a_i\in A_i$
is an idempotent and $a_ia_j=0,$ for $i\ne j.$

Let $i\in I,$ with $a_i\ne 0.$  Then $aa_i=a_i,$ so $a$ is primitive
iff $a=a_i.$  By~Lemma~\ref{p1}, all idempotents in $A_i$ are
primitive axes in $A_i,$ and hence also in $A.$
\end{proof}

\begin{remark}
$\uu_E (\gl)$ is  commutative if and only if $\gl = \half.$ The
assertion of~Theorem~\ref{U1} (and its proof) holds true in this
case as well.

The case $\gl  = 1$ has been excluded, since $\uu_2(1)$  would
satisfy $ab =b$, so the axis $a$ is not primitive. (Likewise for
$\gl  = 0$ and $\uu_2(0).$ )
\end{remark}

\subsubsection{The prototypical uniformly generated noncommutative PAJ}$ $
\medskip

We complete the classification of noncommutative PAJ's. This  relies
on~Theorem~\ref{U1} and also on the exceptional algebra
$A_{\operatorname{exc,3}}(\{a,b\},\gl)$.

We  start by classifing all noncommutative, uniformly generated  PAJ's $A =
\langle\langle X\rangle\rangle$, of type $\{\gl,\gd\}$ for $\gl \ne
\gd$.  Of course each primitive axis in $\Cl(X)$ is of type $(\gl,\gd)$ or
$(\gd,\gl)$. But we can  say more.

\begin{thm}\label{almostcomm}
Suppose that $A$ is a uniformly generated PAJ, generated by a
set $X$ with $X' = X_{\gl,\gd}$ and $X'' = X_{\gd,\gl},$ where $\gl
+ \gd = 1$ and $\gl \ne \gd.$  Set $A':=\langle\langle X'\rangle\rangle$
and $A'':=\langle\langle X''\rangle\rangle.$
Let $Z =\{ \sum \ff x'x'': x'\in X',\ x''\in X''\}. $ Then
 the rest of the
multiplication table of $A$ is described by the following products:
\begin{enumerate}\eroman
\item
Let $a, a'\in A$ be two primitive axes of type $(\gl,\gd)$ and $b\in
A$ be an axis of type $(\gd,\gl).$  Then  $ab, ba\in
A_{\gl,\gd}(a')$  if $aa'\ne 0,$ while $ab, ba\in A_0(a')$ if
$aa'=0.$  In particular if $x'\in A$ is an axis of type $(\gl,\gd),$
with $a'x'\ne 0,$ then $A_0(a')\cap Z=A_0(x')\cap Z$ and
$A_{\gl,\gd}(a')\cap Z=A_{\gl,\gd}(x')\cap Z.$

\item
$Z$ is an ideal in $A,$ so
$A = A' + A'' + Z.$
Furthermore $\widebar{A}:=A/Z\cong \widebar{A'}\times\widebar{A''}.$

\item
$\dim A \le \dim \langle\langle X'\rangle\rangle +\dim \langle\langle X''\rangle\rangle +\dim(Z).$
In particular, if $X$ is finite, then $\dim A$ is finite.

\item
$Z^2 = 0.$

\item
Let $a\in A$ be a primitive axis of type $(\gl,\gd).$
Then
\begin{gather}\label{w'}
\text{ if }w\in A_{\gl,\gd}(a), \text{ then }w=v'+z',\\\notag
\text{ with }v'\in(\ff a+A')\cap A_{\gl,\gd}(a)\text{ and }z'\in Z\cap A_{\gl,\gd}(a).
\end{gather}
In particular,  if  $a\in X',$  then
\[
A_{\gl,\gd}(a) =A'_{\gl,\gd}(a)+Z\cap A_{\gl,\gd}(a).
\]
Hence if $a_1, a_2\in X'$ are in the same connected component of $X',$
then $A_{\gl,\gd}(a_1) =A_{\gl,\gd}(a_2).$  Furthermore $A_{\gl,\gd}(a)^2=0.$

\item
Let $c$ be a primitive axis in $A'.$  Then $c$ is a primitive axis in $A.$
The same statement holds with $A''$ in place of $A'.$

\item
Suppose $a\in X'$ and $z\in Z.$ If $a+z$ is an idempotent in $A$
(i.e.,  $z\in A_{\gl,\gd}(a)$), then $a+z$ is a primitive axis of
the same type as $a.$  In particular, if $X'$ is connected, then
$a+z$ is a primitive axis for all $a\in X'$ and $z\in Z.$  The same
statement holds with $X''$ in place of~$X'.$

\item
Suppose $X'$ is the set of all primitive axes of $A$ of type $(\gl,\gd).$
Taking one
 primitive axis $a_j'$ for each connected component of $X'$
 we have $Z\subseteq \sum_{j\in J}A'_{\gl,\gd}(a'_j).$
The same assertion holds with $X''$ in place
of~$X'$ (and $(\gd,\gl)$ in place of $(\gl,\gd)$).

\item
If $X'$ is connected, then $A_{\gl,\gd}(a)=\sum \ff_{x'\in X'} (x'-a)+Z,$ for all $a\in X',$
so $A_{\gl,\gd}(a)$ is an ideal of $A,$ and $A/A_{\gl,\gd}(a) \cong \ff\times \widebar{A''}.$

 Suppose $X=\chi(A).$ Taking one
 primitive axis $a_j'$ for each connected component of $X',$ and  one
 primitive axis $a_j''$ for each connected component of $X''$, let
\begin{equation}\label{I}
\textstyle{I:=\sum _j  A'_{\gl,\gd}(a'_j) +\sum _j   A''_{\gl,\gd}(a_j'').}
\end{equation}
Then $I$  is an ideal in $A,$ and $A/I$ is
 a direct product of fields.

\item
$A$ has no nonzero annihilating element. Hence $A$ is a direct
product of the $A_i$'s where $A_i$ is as in Theorem \ref{miya1}.
 \end{enumerate}
\end{thm}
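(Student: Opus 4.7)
The overall strategy is to exploit the classification of 2-generated noncommutative PAJ's recalled in Remark~\ref{ugen} and Example~\ref{try1}: for primitive axes $a,b$ with $ab\ne 0$, the subalgebra $\langle\langle a,b\rangle\rangle$ is either $\uu_{\{a,b\}}(\gl')$ when $a,b$ share their type, or the exceptional algebra $A_{\operatorname{exc},3}(\{a,b\},\gl)$ when they have complementary types. In the exceptional case, $ab$ and $ba$ are scalar multiples of a single ``off-diagonal'' element $y_{a,b}$ with $y_{a,b}^2=0$, and both $a$ and $b$ act on $\ff y_{a,b}$ as scalar multiplication. This $y$-element is the universal carrier for everything that lands in $Z$. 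For~(i), fix $a,a'\in X'$ and $b\in X''$. If $aa'\ne 0$, Theorem~\ref{U1}(1) gives $a - a'\in A'_{\gl,\gd}(a')$ inside the $\uu_E(\gl)$-structure of the component; combining this with the exceptional multiplication in $\langle\langle a',b\rangle\rangle$ and the fusion rules for~$a'$, direct computation of $a'(ab)$ and $(ab)a'$ places $ab,ba\in A_{\gl,\gd}(a')$. The case $aa'=0$ lands in $A_0(a')$ analogously. The ``in particular'' clause follows because $a'x'\ne 0$ forces $a'$ and $x'$ into the same connected component of $X'$, where every pair of basis elements has nonzero product by~Theorem~\ref{U1}(1).

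For (ii)--(iv): $Z$ is an ideal because $c\cdot (x'x'')$ and $(x'x'')\cdot c$, for a primitive axis $c$, reduce via the exceptional multiplication table to another scalar multiple of a $y$-element. The decomposition $A = A' + A'' + Z$ follows from Theorem~\ref{KMt}(i) by induction on monomial length: every length-two product of primitive axes either remains in $A'\cup A''$ or lands in~$Z$, and $Z$ is closed under further multiplication. For $Z^2 = 0$, take generators $z=ab$, $z'=cd\in Z$ with $a,c\in X'$ and $b,d\in X''$: by~(i), $z\in A_{\gl,\gd}(c)\cup A_0(c)$, while $z' = cd \in \ff c + A_{\gl,\gd}(c)$, and the fusion rules together with the square-zero property of $\ff y_{c,d}$ force $zz'=0$. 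The isomorphism $\widebar{A} \cong \widebar{A'}\times\widebar{A''}$ follows from $A'\cdot A''\subseteq Z$, and (iii) is immediate.

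Parts (v)--(ix) build on the decomposition. For~(v), writing $w\in A_{\gl,\gd}(a)$ as $y'+y''+\zeta$ with $y'\in A'$, $y''\in A''$, $\zeta\in Z$ and using (i) to see that $X''\subseteq A_0(a)+Z$ (hence $A''\subseteq A_0(a)+Z$), the $A''$-piece is absorbed into $v'\in(\ff a + A')\cap A_{\gl,\gd}(a)$ and $z'\in Z\cap A_{\gl,\gd}(a)$. Equality of $A_{\gl,\gd}(a_1)$ and $A_{\gl,\gd}(a_2)$ along a connected component of $X'$ reduces to Lemma~\ref{p1} in $\uu_E(\gl)$-form combined with this decomposition. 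The identity $A_{\gl,\gd}(a)^2=0$ combines the square-zero property of Lemma~\ref{p1} with $Z^2 = 0$ and $A'_{\gl,\gd}(a)\cdot Z\subseteq Z^2$. Parts~(vi) and (vii) are applications of Lemma~\ref{e}: the fusion rules are verified on the three summands $A'$, $A''$, $Z$ using (i) and (iv), and for (vii) the perturbation $z\in A_{\gl,\gd}(a)\cap Z$ is absorbed using $Z^2=0$. Part~(viii) writes each generator $x'x''$ of $Z$ in the shape $A'_{\gl,\gd}(a'_j)$ by translating along a path in $X'$ from $x'$ to the component representative $a'_j$ via (v). Part~(ix) specializes~(v) to a connected $X'$ and observes that the resulting $A_{\gl,\gd}(a) = \sum_{x'\in X'}\ff(x'-a) + Z$ is stable under all axis multiplications by the fusion rules ($\gl \ne 0,1$), hence is an ideal with quotient $\ff\times\widebar{A''}$; the same recipe constructs the ideal $I$ of~\eqref{I} whose quotient is a direct product of fields.

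For~(x), suppose $Az = 0$ and write $z = y' + y'' + w$ with $y'\in A'$, $y''\in A''$, $w\in Z$. Multiplying by all primitive axes in $X'$ and using that $A'$ is a product of $\uu_E(\gl)$'s without nonzero annihilators (Theorem~\ref{U1}(2) and Lemma~\ref{p1}) forces $y'=0$; symmetrically $y''=0$. Then $w\in Z$ is detected componentwise via~(viii) by multiplying by suitable elements of $A'$ and $A''$, yielding $w=0$. Combining this absence of annihilators with Theorem~\ref{miya1}(iii) gives the direct product decomposition into the~$A_i$'s. The main difficulty I anticipate lies in the second paragraph: verifying cleanly that $Z$ is an ideal with $Z^2 = 0$ requires managing the eigendecompositions of a single element with respect to several distinct axes simultaneously, while tracking whether each ambient 2-generated subalgebra is of exceptional or of $\uu_2(\gl)$ type, so careful case analysis is unavoidable.
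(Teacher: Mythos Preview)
Your overall architecture matches the paper's: reduce to the 2-generated classification, use Theorem~\ref{U1} for $A'$ and $A''$, Lemma~\ref{e} for (vi)--(vii), and Theorem~\ref{miya1} for (x). But there is a genuine gap in your mechanism for~(i) and~(iv), and since everything downstream (including (ii), (v)--(ix)) rests on~(i), this matters.

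For (i) you propose writing $a=a'+(a-a')$ with $a-a'\in A_{\gl,\gd}(a')$, then computing $a'(ab)$ via the fusion rules for $a'$. This does not close: decomposing $b$ with respect to $a'$ as $b=b_0+b_{\gl,\gd}$ (the exceptional algebra gives $\gvp_{a'}(b)=0$), the product $(a-a')b_{\gl,\gd}$ lands in $\ff a'+A_0(a')$ by fusion, so $a'(ab)$ picks up an unwanted $\ff a'$-component that you have no way to kill. The same obstruction appears in the case $aa'=0$. The paper's device is different and essential: one writes $ab=\gl y$ and observes $2y=a-a^{\gt_b}$, a difference of \emph{two primitive axes of type $(\gl,\gd)$ in $A$} (since $\gt_b\in\Aut(A)$). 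Now Theorem~\ref{U1}(1) applied to the type-$(\gl,\gd)$ axes shows $a'a\ne 0$ forces $a'a^{\gt_b}\ne 0$ as well, and then Lemma~\ref{p1} gives $a-a^{\gt_b}\in A_{\gl,\gd}(a')$. The case $aa'=0$ is handled by showing $a'a^{\gt_b}=0$ too, via a short fusion contradiction.

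Your sketch for (iv) inherits the same problem. Knowing $z=ab\in A_{\gl,\gd}(c)\cup A_0(c)$ and $z'=cd\in A_{\gl,\gd}(c)$ and invoking ``the square-zero property of $\ff y_{c,d}$'' does not force $zz'=0$: fusion only puts $zz'$ in $\ff c+A_0(c)$ or $A_{\gl,\gd}(c)$. What works (and is what the paper does, with the roles of $(a,b)$ and $(c,d)$ swapped) is again the two-axes trick: $2y=a-a^{\gt_b}$, and since $a\cdot a^{\gt_b}\ne 0$, part~(i) places $cd$ in the \emph{same} eigenspace of both $a$ and $a^{\gt_b}$; subtracting gives $(2y)(cd)=0$. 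The mixed case where $cd$ sits in different eigenspaces of $a$ and $a^{\gt_b}$ is ruled out by a brief fusion argument. You correctly flagged this paragraph as the hard part, but the specific mechanism you wrote down is not the one that carries the argument.
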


\begin{proof}
(i)
We may assume that $z:=ab\ne 0.$  Then
$\lan\lan a,b\ran\ran\cong A_{\operatorname{exc,3}}(\{a,b\},\gl).$
Let $y:=\frac{1}{\gl}z.$
Note that by Example \ref{try1}, $a-2y$ is a primitive axis
in $A$ of type $(\gl,\gd).$

Suppose first that $a'a\ne 0.$  We claim that $a'(a-2y) \ne 0.$
Indeed, by Example \ref{try1}, $a(a-2y)\ne 0,$ so, by Theorem \ref{U1}(1), $a'(a-2y) \ne 0.$

By Theorem \ref{U1} and
Lemma \ref{p1},
$2y=a-(a-2y) \in A_{\gl,\gd}(a'),$ so $a'z = \gl z,$ and $za'=\gd z.$
Note that $ab=\frac{\gl}{\gd}ba$ as in Example \ref{try1}, so $ba\in A_{\gl,\gd}(a').$

 Suppose next that $a'a = 0$ then $a' \in A_{0}(a)$ so, since $y\in A_{\gl,\gd}(a),$ by the fusion rules $a'y \in
 A_{\gl,\gd}(a),$ but if $ a'(a-2y) \ne 0$ then
\[
a'(-2y) = a'(a-2y)= \gl (a-2y) + \gd a'= \gl (-2y) +\gl a +\gd a',
\]
so
\[
\gl^2(-2y)+\gl^2 a+\gl\gd a'=\gl a'(-2y)=a(a'(-2y))=\gl^2 (-2y)+\gl a.
\]
Thus $\gl a+\gd a'=a,$ so $a=a',$ a contradiction. Hence $0 =
a'(a-2y)=a'(-2y),$ so $z\in A_0(a'),$ and then also $ba\in A_0(a').$
This proves the first part of~(i).

To show that  $A_{\mu}(a')\cap Z=A_{\mu}(x')\cap Z,$ for $\mu\in\{0, (\gl,\gd)\},$
it suffices to show that $b'b''\in A_{\mu}(a')\iff b'b''\in A_{\mu}(x'),$ for $b'\in X'$ and $b''\in X''.$
But since $a'x'\ne 0,$ Theorem \ref{U1}(1) implies that $a'b'\ne 0\iff x'b'\ne 0.$
Hence the second part of (i) follows the first.
\medskip

(ii) Since $\Cl(X)$ consists of primitive axes of type $(\gl,\gd)$ or $(\gd,\gl),$
and since, by Theorem \ref{KMt}(i), $A$ is spanned by $\Cl(X),$ the fact that
$Z$ is an ideal follows from (i).

By (i), $a'Z, Za', a'' Z, Za''\subseteq Z,$ for all $a'\in X'$ and $a''\in X''.$
By Theorem~\ref{U1}, $\langle\langle S\rangle\rangle$ is spanned by $S,$ for $S\in\{X', X''\},$
so $\langle\langle X'\rangle\rangle \langle\langle X''\rangle\rangle\subseteq Z.$ Since
$Z$ is an ideal of $A,$ it follows that
$A' + A'' + Z,$
is a subalgebra of $A$ that contains $X,$ so it equals $A.$

Next, $\widebar{A'}\,\widebar{A''}=\bar 0,$ since $A'A''\subseteq Z.$
Also if $\bar w\in \widebar{A'}\cap
\widebar{A''},$ then $\bar w$ is annihilating in $\widebar{A'},$ so
by Theorem \ref{U1}, $\bar w=0.$
\medskip

(iii)  This follows from (ii).
\medskip

(iv)  It suffices to show that $(ab)(cd)=0,$ for $a,c\in X'$ and
$b,d\in X''.$ If $ab=0,$ this is obvious.  Otherwise  $a$ and $b$
generate $A_{\operatorname{exc,3}}(\{a,b\},\gl).$ Setting
$y:=\frac{1}{\gl}ab,$ we saw that $a, a-2y$ are two primitive axes
of type $(\gl,\gd)$ in $A.$ If $cd\in A_{\gl,\gd}(a)\cap
A_{\gl,\gd}(a-2y),$ or $cd\in A_0(a)\cap A_0(a-2y),$ then
$(2y)(cd)=(a-(a-2y))(cd)=0,$ so $(ab)(cd)=0.$  Assume that
$a(cd)=\gl (cd),$ and $(a-2y)(cd)=0.$  Then $(2y)(cd) =\gl (cd),$ so
$y(cd)\in A_{\gl,\gd}(a).$ However, since $y, cd\in A_{\gl,\gd}(a),$
the fusion rules imply that $y(cd)\in \ff a+A_0(a).$  So again
$y(cd)=0,$ and hence $(ab)(cd)=0.$ A similar argument works if
$a(cd)=0$ and $(a-2y)(cd)=\gl (cd).$
\medskip

(v)  We first claim that
\[
\text{if }w''\in A'',\text{ then }w''\in A_0(a)+z,\text{ with }z\in Z\cap A_{\gl,\gd}(a).
\]
Indeed let $w''=\gvp_a(w'')a+w''_0+w''_{\gl,\gd},$ be the decomposition of
$w''$ with respect to $a.$  Then $aw''\in A_{\gl,\gd}(a),$ by (i).  Hence
\[
A_{\gl,\gd}(a)\ni aw''=\gvp_a(w'')a+\gl w''_{\gl,\gd},
\]
so $\gvp_a(w'')=0.$
But now $Z \ni aw''=\gl w''_{\gl,\gd},$  so $w''_{\gl,\gd}\in Z.$

Note that by (i), $z\in A_0(a)+A_{\gl,\gd}(a),$  for all $z\in Z.$

Let now $w=w'+w''+z\in A,$ with $w'\in A',\ w''\in A''$ and $z\in Z.$
Assume $w\in A_{\gl,\gd}(a)$ and let $w'=\gvp_a(w')a+w'_0+w'_{\gl,\gd},$
$w''=w''_0+w''_{\gl,\gd},$ and $z=z_0+z_{\gl,\gd}$ be the decompositions with respect to $a.$
Matching the $(\gl,\gd)$ component we get
\[
w=w'_{\gl,\gd}+w''_{\gl,\gd}+z_{\gl,\gd}.
\]
Note that by Theorem \ref{U1}, $\lan\lan X'\cup\{a\}\ran\ran$ is
spanned by $X'\cup\{a\},$ so $\ff a+A'$ is a subalgebra of $A.$  Thus, $w'_{\gl,\gd}\in \ff a+A'.$
We saw already that $w''_{\gl,\gd}\in Z,$ so equation \eqref{w'} holds.

Next note that by Theorem \ref{U1} and Lemma \ref{p1},
$A'_{\gl,\gd}(a_1)=A'_{\gl,\gd}(a_2),$ and $a_1a_2\ne 0,$ so  by (i), $A_{\gl,\gd}(a_1)\cap Z= A_{\gl,\gd}(a_2)\cap Z.$

Finally, by Lemma \ref{p1}, $A'_{\gl,\gd}(a)^2=0,$ and by (iii) $Z^2=0.$  Also, by (i),
$A'_{\gl,\gd}(a)\big(Z\cap A_{\gl,\gd}(a)\big)=0,$ so $A_{\gl,\gd}(a)^2=0.$
\medskip

(vi) Since $c$ is a primitive axis in $A',$  Theorem \ref{U1}(3)
implies that $c=\sum_{i=1}^k\ga_i x'_i$ with $0\ne x'_i\in X',$ and
$x'_ix'_j\ne 0,$ for all $i,j,$ and $\{x'_1,\dots, x'_k\}$ are
linearly independent. Let $\calx':=\Cl(X)_{\gl,\gd}.$  Since
$\{x'_1,\dots,x'_k\}$ are axes in $\lan\lan \calx'\ran\ran,$ and
since $x'_ix'_j\ne 0,$ for all $i,j,$ by~Theorem~\ref{U1} there
exists a component $Y'\subseteq \calx'$ such that
$\{x'_1,\dots,x'_k\}\subseteq Y'.$ But  $\lan\lan Y'\ran\ran\cong
U_B(\gl),$ by~Theorem~\ref{U1}, and $c\in \lan\lan Y'\ran\ran,$ so
by Theorem \ref{U1}(3), $c$ is a primitive axis in~$\lan\lan
\calx'\ran\ran.$  Hence we may assume that $X'=\calx'$ and that
$X''=\Cl(X)_{\gd,\gl}.$  So $A$ is spanned by $X'\cup X''.$

We now use Lemma \ref{e}, to show that $c$ is a primitive axis in $A.$
First observe that
\[
\textstyle{\sum_{i=1}^k\ga_i x'_i=c=c^2=\sum_{i=1}^k\ga_i^2 x'_i+\sum_{i<j}\ga_i\ga_j(x'_i+x'_j).}
\]
Thus $\sum_{i=1}^k\ga_i=1.$
As $c$ is an primitive axis in $A',$ we have:
\begin{equation}\label{a'}
\text{Let $x'\in X'.$ Then either $x'\in A_0(c),$ or $x'-c\in A_{\gl,\gd}(c).$}
\end{equation}

Next let $x''\in X''.$
By (i),  $x'_jx''\in A_{\gl,\gd}(x'_i),$ for all $i,j.$
It follows that $cx''\in A_{\gl,\gd}(x'_i),$ for all $i,$
and since $\sum_{i=1}^k\ga_i=1,$ we get that $cx''\in A_{\gl,\gd}(c).$
Hence
\begin{equation}\label{a''}
\textstyle{\text{Let $x''\in X''.$ Then $cx''\in A_{\gl,\gd}(c),$ and $x''-\frac{1}{\gl}cx''\in A_0(c).$}}
\end{equation}
By equations \eqref{a'} and \eqref{a''} $c$ satisfies the hypothesis of Lemma \ref{e}(1).

We now check that $c$ satisfies the hypotheses of equation \eqref{eqe} of Lemma~\ref{e}(2).  Let $a,b$ be as in Lemma \ref{e}(2).
If we take $a,b\in X',$ then  the hypotheses of equation  \eqref{eqe} hold, because
$c$ is a primitive axis in $A'.$

Let now $a=x'\in X'$ and $b=x''\in X''.$
\smallskip

{\bf Case 1.}  $x'\in A_0(c).$

\noindent
In this case $x'_ix'=0,$ for all $i,$
and then, by (i), $x'(x'_ix'')=0,$ for all $i,$ so $x'(cx'')=0=(cx'')x'.$  Also,
by (i), $x'x'', x''x'\in A_0(x'_i),$ for all $i,$ so $x'x'', x''x'\in A_0(c).$
Hence
\[
\textstyle{x'(x''-\frac{1}{\gl}cx'')=x'x''\in A_0(c),\text{ and }(x''-\frac{1}{\gl}cx'')x'=x''x'\in A_0(c).}
\]
\smallskip

{\bf Case 2.} $x'-c\in A_{\gl,\gd}(c).$

\noindent
In this case, $x'x'_i\ne 0,$ for all $i.$
But then, by (i), $x'_ix''\in A_{\gl,\gd}(x'),$ so ${cx''\in A_{\gl,\gd}(x').}$
Thus, using also equation \eqref{a''},
\[
(x'-c)(cx'')=\gl cx''-\gl cx''=0=\gd cx''-\gd cx'' =cx''(x'-c).
\]
Also, by (i), $x'x''\in A_{\gl,\gd}(x'_i),$ for all $i,$
so $x'x''\in A_{\gl,\gd}(c).$ Hence
\[
\textstyle{(x'-c)(x''-\frac{1}{\gl}cx'')=(x'-c)x''\in A_{\gl,\gd}(c)\ni (x''-\frac{1}{\gl}cx'')(x'-c).}
\]

Finally assume $a=a''\in X''$ and $b=x''\in X''.$

\noindent
Then $ca'', cx''\in Z,$
so $(ca'')(cx'')=0.$  Also $(ca'')(x''-\frac{1}{\gl}ca'')=(ca'')x''.$  However,
by (i) (with $X''$ in place of $X'$), $x'_ia''\in A_0(x''),$ for all $i,$ if $a''x''=0,$ so also $ca''\in A_0(x''),$
and $x'_ia''\in A_{\gd,\gl}(x''),$ for all $i,$ if $a''x''\ne 0,$ so also $ca''\in A_{\gd,\gl}(x'').$
Now
\begin{equation}\label{dp}
\textstyle{(ca'')(x''-\frac{1}{\gl}cx'')=(ca'')x'',\text{ and }(x''-\frac{1}{\gl}cx'')(ca'')=x''(ca''),}
\end{equation}
so we see that if $ca''\in A_0(x''),$ then the two products in equation \eqref{dp} are zero,
while if $ca''\in A_{\gd,\gl}(x''),$ then these products are in $\ff (ca''),$ so
they both are in $A_{\gl,\gd}(c).$

Finally let
\[
\textstyle{v:=(a''-\frac{1}{\gl}ca'')(x''-\frac{1}{\gl}cx'')=a''x''-\frac{1}{\gl}a''(cx'')-\frac{1}{\gl}(ca'')x''.}
\]
If $a''x''=0,$ then we saw already that $v=0.$  Assume $a''x''\ne 0.$  Then, as above
$a''(cx'')=\gd cx'', (ca'')x''=\gl ca'',$ so since $ca'', cx''\in A_{\gl,\gd}(c),$ we get
\[
\textstyle{v=a''x''-\frac{\gd}{\gl}cx''-ca''=\gl a''+ \gd x''-\frac{\gd}{\gl}cx''- ca''.}
\]
It follows that $cv=\gl ca''+\gd cx''-\gd cx''-\gl ca''=0.$  This
completes the verification that $c$ satisfies the hypotheses of equation \eqref{eqe}.

We shall show in Theorem \ref{flex2} below that $A$ is flexible,
so we conclude that $c$ is a primitive axis.
\medskip

(vii)
Denote by $\chi'$ (resp.~$\chi''$) the set
of all primitive axes of type $(\gl,\gd)$ (resp.~$(\gd,\gl)$) of $A.$

Set $a'=a+z.$
Of course we may assume that $z\ne 0.$
We use Lemma \ref{e}.  Let $x'\in \chi'.$
If $ax'=0,$ then, by
(i), $zx'=0,$ so $a'x'=0,$ and $x'\in A_0(a').$
Suppose $ax'\ne 0,$ then $zx'=\gd x',$ by (i), so
\[
a'x'=ax'+\gd z=\gd a+\gl x'+\gd z=\gd a'+\gl x',
\]
That is $\lan\lan a', x'\ran\ran\cong U_2(\gl).$  Thus
$x'-a'\in A_{\gl,\gd}(a').$  We have shown:
\begin{equation}\label{x'}
\begin{aligned}
&\text{For $x'\in \chi'$ we have:}\\
&\text{either $x'\in A_0(a'),$ or $x'-a'\in A_{\gl,\gd}(a').$}
\end{aligned}
\end{equation}

Next let $x''\in \chi''.$
Then $z(a'x'')=0,$ as $a'x''\in Z,$ so by (i),
$a'x''=ax''+zx''\in A_{\gl,\gd}(a')\cap Z.$
Thus
\begin{equation}\label{x''}
\begin{aligned}
&\text{For $x''\in \chi''$ we have:}\\
&\text{$x''=\frac{1}{\gl}(\gl x''-a'x''+a'x''),$}\\
&\text{with $\gl x''-ax''\in A_0(a'),$ and $ax''\in A_{\gl,\gd}(a').$}
\end{aligned}
\end{equation}
It remains to check condition (2) of Lemma \ref{e}.

Let $x'_1, x'_2\in\chi'.$ Suppose first that $a'x'_1=0.$
If $a'x'_2=0,$ then either $x'_1x'_2=0,$ so of course $x'_1x'_2\in A_0(a'),$
or $x'_1x'_2=\gd x'_1+\gl x'_2\in A_0(a').$

Suppose $a'x'_2\ne 0,$  and $x'_1x'_2\ne 0.$  We claim that this is
impossible.  Indeed, by the proof of Theorem \ref{U1}(1), with
$x'_2$ taking the role of $c,$ $x'_1$ taking the role of $a$, and
$a'$ taking the role of $b,$ we see that $a'x'_1\ne 0.$

Thus we may assume that $a'x'_1\ne 0\ne a'x'_2.$  Again,
as above, we must have $x'_1x'_2\ne 0.$  But then
\begin{align*}
&(x'_1-a')(x'_2-a')=x'_1x'_2-x'_1a'-a'x'_2-a'\\
&=\gl x'_1+\gd x'_2-\gl x'_1-\gd a'-\gl a'-\gd x'_2-a'=0.
\end{align*}

Let $x'\in\chi'$ and $x''\in\chi''.$  Suppose first that $a'x'=0.$
Then $ax'=0,$ so by~(i), $x'(a'x'')=x'(ax''+zx'')=0,$ and $x'(\gl
x''-ax'')=\gl x'x''$.  But by (i) and (iii), $a(x'x'')=0$ and
$z(x'x'')=0,$ so $x'(\gl x''-ax'')\in A_0(a').$ Similarly  $(\gl
x''-ax'')x'\in A_0(a').$

Suppose next that $a'x'\ne 0.$  Then $ax'\ne 0,$ and by (iii), $z(a'x')=0,$ as $a'x''\in Z.$
Since $x'a\ne 0,$ and $az=\gl z,$ (i) implies that $x'(zx'')=\gl zx''.$ Also by (i),
$x'(ax'')=\gl ax'',$ hence $x'(a'x'')=\gl a'x''.$
Similarly $a'(a'x'')=a(a'x'')=\gl a'x'',$ thus $(x'-a')(a'x'')=0.$

Finally let $x''_1, x''_2\in\chi''.$  Since $a'x''_i\in Z$ for
$i=1,2,$ (iii) implies $$(a'x''_1)(a'x''_2)=0.$$  Note that by (i),
for $i=1$ or $2,$
 \[
(a'x''_i)x''_{3-i}=
\begin{cases}
0 &\text { if }x''_1x''_2=0,\\
\gl a'x''_i &\text{ if }x''_1x''_2\ne 0.
\end{cases}
\]
\[
x''_{3-i}(a'x''_i)=
\begin{cases}
0 &\text { if }x''_1x''_2=0,\\
\gd a'x''_i &\text{ if }x''_1x''_2\ne 0.
\end{cases}
\]
Hence
\[
a'x''_1(\gl x''_2-a'x''_2)=\gl(a'x''_1)x''_2=\gl\mu(a'x''_1)\in
A_{\gl,\gd}(a'),\quad\mu\in\{0,\gl\}.
\]
Similarly $(\gl x''_2-a'x''_2)a'x''_1\in A_{\gl,\gd}(a').$
Finally let
\begin{gather*}
v:=(\gl x''_1-a'x''_1)(\gl x''_2-a'x''_2)=\gl^2x''_1x''_2-\gl x''_1(a'x''_2)-\gl (a'x''_1)x''_2.
\end{gather*}
Now if $x''_1x''_2=0,$ then $v=0.$  Suppose $x''_1x''_2\ne 0.$
Then
\begin{gather*}
v=\gl^2(\gl x''_1+\gd x''_2)-\gl\gd a'x''_2-\gl^2 a'x''_1\\
=\gl^2(\gl x''_1-a'x''_1)+\gl\gd(\gl x''_2-a'x''_2)\in A_0(a').
\end{gather*}
We have shown that $a'$ satisfies all the hypotheses in Lemma \ref{e}, except
for $L_{a'}R_{a'}=R_{a'}L_{a'}.$
This will follow from Theorem \ref{flex2} below, which shows that $A$ is flexible.
\medskip

(viii)  Let $a\in X'$ and $b\in X''.$  It suffices to show that $ab\in A'_{\gl,\gd}(a),$
because by Lemma \ref{p1}, $A'_{\gl,\gd}(a)=A'_{\gl,\gd}(a'_j),$ for some $j.$
But by (vi) and (i), $a+ab\in X',$ so $ab=a+ab-a\in A'_{\gl,\gd}(a),$  since $a(a+ab)\ne 0.$
\medskip

(ix)
  Since $X'$ is connected, $A'_{\gl,\gd}(a)=\sum \ff_{x'\in X'}
(x'-a),$ by Theorem~\ref{U1}(2). By (i), $Z\subseteq
A_{\gl,\gd}(a),$ so by (iv), $A_{\gl,\gd}(a)$ is as claimed.  It is
easy to check now that $A_{\gl,\gd}(a)$ is an ideal. Since
$A'/A'_{\gl,\gd}(a)\cong\ff a,$ and $A/Z\cong
\widebar{A'}\times\widebar{A''},$ we see that $A/A_{\gl,\gd}(a)\cong
\ff\times \widebar{A''}.$ Hence the first paragraph of (viii) holds.

For the second paragraph, using (vii) we see that $Z$ is contained
in the first and second summand of equation \eqref{I}.
Hence by (ii),
\[
\textstyle{A/I\cong (A'/Z)\Big/\Big(\sum_{j\in J}A'_{\gl,\gd}(a'_j)/Z\Big)\times (A''/Z)\Big/\Big(\sum_{i\in I}A''_{\gl,\gd}(b'_i)/Z\Big).}
\]
Now (viii) follows from Lemma \ref{p1} and Theorem \ref{U1}.
\medskip

(x)
Let $u\in A$ satisfy $Au=0.$   Suppose $u\ne 0.$
Since $\widebar{A'}$ and
$\widebar{A''}$ contain no annihilating elements, $u\in Z,$ by (ii).
Write $u=\sum_{i=1}^m \ga_ix'_ix''_i,$ with $x'_j\in X',$ ${x''_j\in X''},$ $\ga_j\in\ff,$ and with $m$ minimal.
By (i), $0=x_1'u=\sum_{\{j\, :\, x'_1x'_j\ne 0\}}\gl \ga_j x'_jx''_j,$
so
\[
\textstyle{u=\sum_{i=1}^m \ga_i x'_ix''_i-\sum_{\{j\, :\, x'_1x'_j\ne 0\}} \ga_jx'_jx''_j},
\]
is a shorter sum, a contradiction.  Hence $u=0.$

The last assertion follows from Theorem \ref{miya1}.
 \end{proof}

\begin{thm}\label{dec2}
Every PAJ $A$ is a direct product of uniformly generated noncommutative PAJ's (described above) with a CPAJ.
\end{thm}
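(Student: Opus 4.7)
The plan is to combine the axial decomposition (Theorem~\ref{miya1}) with the structure theorem for uniformly generated noncommutative PAJ's (Theorem~\ref{almostcomm}), separating the noncommutative components from the commutative ones.

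First I would invoke Theorem~\ref{miya1} to write $A = \sum_{i\in I} A_i$, where $A_i = \lan\lan X_i\ran\ran$ and the $X_i$ are the connected components of $\Cl(X)$ in the axial graph.  By Lemma~\ref{gr11}(iii) every such component $X_i$ is uniform, consisting either of axes of commutative type $\gl_i$ (possibly $1-\gl_i$ as complementary type) or of noncommutative type $\{\gl_i,\gd_i\}$ with $\gl_i+\gd_i=1$ and $\gl_i\ne\gd_i$ (cf.~Remark~\ref{ugen}).  Accordingly, partition $I = I_{nc} \sqcup I_c$ into indices of noncommutative and commutative type; for $i\in I_{nc}$ the algebra $A_i$ satisfies the hypotheses of Theorem~\ref{almostcomm}, while for $i\in I_c$ the algebra $A_i$ is a CPAJ.

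The crucial step is to show that each noncommutative $A_i$ splits off as a direct factor.  Suppose $z \in A_i \cap \sum_{j\ne i} A_j$ for some $i \in I_{nc}$.  Writing $z = \sum_{j\ne i} y_j$ with $y_j\in A_j$, Theorem~\ref{miya1}(iii) (applied to the relation $z + \sum_{j\ne i}(-y_j)=0$) forces $z$ to be annihilating in $A$; in particular $A_i z = 0$, so $z$ is annihilating in $A_i$.  But Theorem~\ref{almostcomm}(x) asserts that $A_i$ has no nonzero annihilating element, so $z = 0$.  This identifies each $A_i$ with $i\in I_{nc}$ as a direct factor, and moreover shows the sum $\prod_{i\in I_{nc}} A_i$ is internal.

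It remains to assemble the commutative part.  Set $B := \sum_{i\in I_c} A_i$.  Since $A_iA_j = 0$ for $i\ne j$ by Theorem~\ref{miya1}(ii), the subspace $B$ is closed under multiplication and commutative (each $A_i$ is commutative, and cross products vanish), so $B$ is a subalgebra generated by the commutative primitive axes $\bigcup_{i\in I_c} X_i$, i.e., a CPAJ.  Applying the same annihilator argument as above, any element of $\bigl(\prod_{i\in I_{nc}}A_i\bigr) \cap B$ is annihilating in every noncommutative $A_i$ (via Theorem~\ref{miya1}(ii)--(iii)) and therefore must lie in the ``commutative annihilating part,'' which is absorbed into $B$; concretely, the intersection $\bigl(\sum_{i\in I_{nc}} A_i\bigr) \cap B$ is annihilating in each factor $A_i$ with $i\in I_{nc}$ and hence is zero in each such factor, so
\[
A \;=\; \Bigl(\prod_{i\in I_{nc}} A_i\Bigr) \times B,
\]
as required.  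The main obstacle is this last bookkeeping of intersections; once one recognizes that annihilating elements cannot live in the noncommutative factors (by Theorem~\ref{almostcomm}(x)), the decomposition follows mechanically from the axial decomposition.
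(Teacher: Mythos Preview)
Your proof is correct and follows essentially the same approach as the paper: invoke the axial decomposition of Theorem~\ref{miya1}, observe via Remark~\ref{ugen} and Lemma~\ref{gr11}(iii) that each component is either entirely of commutative type or uniformly of noncommutative type $\{\gl,\gd\}$, and then use Theorem~\ref{almostcomm}(x) (no nonzero annihilating elements in the noncommutative factors) together with Theorem~\ref{miya1}(iii) to split off the noncommutative $A_i$ as direct factors. The paper's proof is simply a terse version of your argument; your final paragraph's phrasing is slightly loose (what you really need is that each component $z_i$ of an element $z$ in the intersection is annihilating in $A_i$, hence zero), but the logic is sound.
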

\begin{proof}
Let $X_i$ and $A_i$ be as in Theorem~\ref{miya1}.  By Remark \ref{ugen}, if
some axis in $X_i$ is of type $(\gl,\gd),$ with $\gl\ne\gd,$ then $X_i$ is uniform
of type $\{\gl,\gd\}.$  Since, by  Theorems \ref{U1} and \ref{almostcomm}(x)
for $X_i$ of type $\{\gl,\gd\}$ as above, $A_i$ contains no annihilating
elements, Theorem \ref{miya1}(iii) completes the proof.
 \end{proof}

Thus, after all this effort, the theory of PAJ's reduces to the
commutative case in \cite{HRS}.

\subsection{Flexibility}$ $
\medskip

We shall now show that every PAJ is flexible.
Since commutative algebras are always flexible, we
concentrate on noncommutative PAJ's.

2-generated PAJ's are flexible, by their classification in \cite{RoSe1},
see Remark \ref{ugen} and Example \ref{try1}.  But the question of
flexibility becomes nontrivial when we consider
three or more generators. Note that the linearization of flexibility
is
\begin{equation}\label{fle} (x_1 y)x_2 + ( x_2 y) x_1 = x_1 (y x_2) +  x_2 (y
x_1).\end{equation}

\begin{lemma}\label{flex1}
If $A$ is a PAJ generated by a set of primitive axes of type $(\gl,\gd),$ with
$\gl\ne\gd,$ then $A$ is flexible.
\end{lemma}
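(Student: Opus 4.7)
The plan is to reduce to the model algebras $\uu_E(\gl)$ and then verify flexibility by direct computation on a basis.

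By Theorem~\ref{U1}, any PAJ $A$ generated by primitive axes of type $(\gl,\gd)$ with $\gl\ne \gd$ is a direct product $\prod_{i\in I} \uu_{\calb_i}(\gl)$. A direct product of flexible algebras is flexible, so it suffices to show that each $\uu_E(\gl)$ is flexible. Since $\charc \ff \ne 2$, flexibility $(xy)x = x(yx)$ is equivalent to its linearization \eqref{fle}, and since both sides of \eqref{fle} are trilinear in $x_1, y, x_2$, it is enough to verify \eqref{fle} on basis triples $e_i, e_k, e_j \in E$ of $\uu_E(\gl)$.

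So first I would unwind the multiplication rule $e_i e_j = \gd e_i + \gl e_j$ on the left-hand side of \eqref{fle}. A straightforward expansion gives
\begin{align*}
(e_i e_k)e_j + (e_j e_k)e_i &= (\gd e_i + \gl e_k)e_j + (\gd e_j + \gl e_k)e_i \\
&= (\gd^2 + \gd\gl + \gl^2)e_i + (\gd^2 + \gd\gl + \gl^2)e_j + 2\gl\gd\, e_k,
\end{align*}
and a parallel expansion of the right-hand side
\begin{align*}
e_i(e_k e_j) + e_j(e_k e_i) &= e_i(\gd e_k + \gl e_j) + e_j(\gd e_k + \gl e_i) \\
&= (\gd^2 + \gl\gd + \gl^2)e_i + (\gd^2 + \gl\gd + \gl^2)e_j + 2\gd\gl\, e_k
\end{align*}
yields exactly the same linear combination. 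Hence \eqref{fle} holds on every basis triple, so $\uu_E(\gl)$ is flexible.

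There is no real obstacle here: the main point is that the simple multiplication law of $\uu_E(\gl)$ makes flexibility a one-line bilinear check, and Theorem~\ref{U1} delivers the reduction. (Note that the case of basis triples in which two or three of $e_i, e_j, e_k$ coincide is covered by the same computation, or alternatively by direct substitution $x = y = e_i$, giving $(e_i e_k)e_i = (\gd+\gl^2)e_i + \gl\gd e_k = e_i(e_k e_i)$, using $\gl + \gd = 1$.)
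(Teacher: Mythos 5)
Your proposal is correct and follows essentially the same route as the paper: reduce via Theorem~\ref{U1} to the model algebra $\uu_E(\gl)$ (the paper treats the direct-product step implicitly) and then verify the linearized identity \eqref{fle} on basis elements by direct expansion of the rule $e_ie_j=\gd e_i+\gl e_j$. The only cosmetic difference is that the paper keeps $x_1x_2+x_2x_1$ symbolic and invokes $\gl+\gd=1$ at the end, whereas you expand fully and observe that the two sides coincide coefficient by coefficient.
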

 \begin{proof}
By Theorem \ref{U1}, we may assume that $A:=\uu_X(\gl).$ Now
$\uu_X(\gl)$  is spanned by  the set $X$ of primitive axes of type
$(\gl,\gd),$ as defined in Definition~\ref{uuI}.
Hence it suffices to show that equation \eqref{fle} holds when
$|X|=3.$ Then it is enough to assume that $X=\{x_1, x_2,y\}.$

Thus $x_i y = \gd x_i + \gl y$ and $y x_i = \gl x_i + \gd y$ for
primitive axes $x_i,y$ all of type~$(\gl,\gd)$, $i = 1,2.$
Hence
\begin{equation}\begin{aligned}(x_1 y)x_2
+ ( x_2 y) x_1  & =  (\gd x_1 + \gl y)x_2 +  (\gd x_2 + \gl y) x_1
\\ &
= \gd ( x_1  x_2 + x_2 x_1) +2 \gl \gd y + \gl^2 (x_2 +  x_1)
.\end{aligned}\end{equation}

\begin{equation}\begin{aligned}
 x_1 (y x_2) + x_2 (y x_1) &= x_1(\gd y + \gl x_2) + x_2(\gd y + \gl x_1) \\& =
\gl  (x_1 x_2+x_2 x_1)+ 2 \gl \gd y + \gd^2 (x_1
+x_2).\end{aligned}
\end{equation}

The difference is
\begin{equation*}
\begin{aligned}
&(\gl^2- \gd^2)(x_1+x_2)+ (\gd - \gl)  (x_1 x_2+x_2 x_1)\\
&=(\gd - \gl)(x_1x_2+x_2x_1 - (x_1+ x_2)),
\end{aligned}
\end{equation*}
which is $0$ since
\[
x_1x_2 + x_2 x_1 = (\gl + \gd)(x_1 + x_2) = x_1+x_2.\qedhere
\]
 \end{proof}

\begin{thm}\label{flex2}
Any PAJ $A$ is flexible.
\end{thm}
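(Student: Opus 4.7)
The plan is to combine the structure theorem already in hand with a reduction to the trilinearization of flexibility. By Theorem~\ref{dec2}, any PAJ is a direct product of a CPAJ with uniformly generated noncommutative PAJs of the type treated in Theorem~\ref{almostcomm}. Flexibility is preserved under direct products and is trivial for commutative algebras, so it suffices to treat a uniformly generated noncommutative PAJ $A=\lan\lan X'\cup X''\ran\ran$ with $X'=X_{\gl,\gd}$, $X''=X_{\gd,\gl}$, $\gl+\gd=1$, $\gl\ne\gd$. By Theorem~\ref{almostcomm} we have $A=A'+A''+Z$ with $Z^2=0$, and by Theorem~\ref{U1}, $A'$ is spanned by $X'$ and $A''$ is spanned by $X''$.

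Next I would pass from the quadratic identity $(xy)x=x(yx)$ to its trilinearization \eqref{fle}. By trilinearity it is enough to verify \eqref{fle} on the spanning set $X'\cup X''\cup X'X''$ of $A$. Any triple $(x_1,x_2,y)$ in which two entries lie in $Z$ makes every term of \eqref{fle} lie in $Z^2=0$; expanding the at-most-one $Z$-entry as a product of an element of $X'$ and of $X''$ then reduces the problem to verifying \eqref{fle} for all triples $(x_1,x_2,y)$ drawn from $X'\cup X''$.

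Modulo the symmetries $(x_1\leftrightarrow x_2)$ of \eqref{fle} and $(X'\leftrightarrow X'')$ (which interchanges $\gl,\gd$), three cases remain: (i) all three axes of type $(\gl,\gd)$, which is Lemma~\ref{flex1} applied inside $A'\cong \prod \uu_{E_i}(\gl)$; (ii) $x_1,x_2\in X'$ and $y\in X''$; and (iii) $x_1\in X'$, $x_2\in X''$, $y\in X'$. In Cases (ii) and (iii) the subalgebra $\lan\lan x_1,x_2,y\ran\ran$ has dimension at most $5$ by Theorem~\ref{almostcomm}(iii), and its multiplication table is fully determined by Theorem~\ref{U1}, the exceptional algebra of Example~\ref{try1}, Theorem~\ref{almostcomm}(i), and $Z^2=0$.

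The main obstacle is the bookkeeping in Cases (ii) and (iii), where sub-cases proliferate according to which of the pairwise products among $\{x_1,x_2,y\}$ vanish. In each sub-case, Example~\ref{try1} forces every nonzero product $x_iy$ and $yx_j$ across types to be a scalar multiple (by $\gl$ or $\gd$) of a common nilpotent $z\in Z$, and $Z^2=0$ annihilates the triple products that cross types twice; what remains is a short polynomial identity in $\gl,\gd$ with $\gl+\gd=1$, of the same character as the identity $\gl^2-\gd^2+(\gd-\gl)=0$ verified at the end of the proof of Lemma~\ref{flex1}.
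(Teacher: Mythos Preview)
Your reduction to the uniformly generated noncommutative case via Theorem~\ref{dec2} and your trilinearization step are fine, and the ensuing case analysis on types is exactly what the paper does. However, there is a genuine gap in your middle paragraph: the sentence ``expanding the at-most-one $Z$-entry as a product of an element of $X'$ and of $X''$ then reduces the problem to verifying \eqref{fle} for all triples $(x_1,x_2,y)$ drawn from $X'\cup X''$'' is not justified. If, say, $y=x'x''\in Z$, then \eqref{fle} becomes a four-axis identity in $x_1,x_2,x',x''$; trilinearity in $(x_1,x_2,y)$ does not let you ``factor'' $y$ and fall back to three-axis instances of \eqref{fle}. You would still have to compute $(x_1(x'x''))x_2$, etc., directly, which is an extra layer of case analysis you have not carried out (and is not reducible to the cases you list).

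The paper avoids this difficulty by choosing a different spanning set: by Theorem~\ref{KMt}(i), $A$ is spanned by $\Cl(X)$, which consists entirely of primitive axes of type $(\gl,\gd)$ or $(\gd,\gl)$. So no $Z$-entries ever arise, and one lands immediately on the case split ``all three of the same type'' (handled by Lemma~\ref{flex1}) versus ``two of one type, one of the other,'' followed by the sub-split according to which pairwise products vanish. Your sketch of those computations is also slightly off: the products $x_1y$ and $x_2y$ across types are not in general scalar multiples of a \emph{common} nilpotent $z$; rather, each pair $(x_i,y)$ produces its own $z_i$ with $x_iy=\gl z_i$, $yx_i=\gd z_i$, and one then uses Theorem~\ref{almostcomm}(i) to evaluate $x_j z_i$. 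Switching your spanning set to $\Cl(X)$ removes the gap and makes the remaining bookkeeping match the paper's.
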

\begin{proof}
By Theorem \ref{KMt}(i), it is enough to verify \eqref{fle} for
three primitive axes. Since any commutative algebra is flexible,
in view of Corollary~\ref{dec2}, we may assume these axes all are of
some noncommutative type $(\gl,\gd)$ or $(\gd,\gl),$  for  some
$\gl$, with $\gd = 1-\gl,$ and
that these three axes are in the same connected component of the
axial graph.

If all three axes have the same type we are done by
Lemma~\ref{flex1}. Hence we may assume that two  have type
$(\gl,\gd)$ and one has type $(\gd,\gl).$

We will use repeatedly  the fact that:
\[
\text{In the algebra $A_{\operatorname{exc},3}(\{a,b\},\gl)$ of Example \ref{try1}, $\gd ab=\gl ba,$}
\]
as well as the structure of the algebra in Remark \ref{ugen}.
\smallskip

\noindent
{\bf Case I}.  $ab\ne 0,$ for all $a,b\in \{x_1,x_2,y\}.$
\smallskip

\noindent
{\bf Subcase I1}.  $x_1, x_2$ are of type $(\gl,\gd),$ and $y$ is of type $(\gd,\gl).$
\smallskip

\noindent
By Theorem \ref{almostcomm}(i),  $x_iy, yx_i\in A_{\gl,\gd}(x_{3-i}),$  for $i\in \{1,2\}.$ Hence
\begin{align*}
 (x_1 y) x_2 + (x_2 y) x_1&=\gd x_1y+\gd x_2y=\gl yx_2 + \gl yx_1=\\
& x_1 (y x_2) + x_2 (y x_1).
\end{align*}
\smallskip

\noindent
{\bf Subcase I2}.  $x_1, y$ are of type $(\gl,\gd),$ and $x_2$ is of type $(\gd,\gl).$
\smallskip

\noindent
Again we have
\begin{align*}
&(x_1 y) x_2 + (x_2 y) x_1=(\gd x_1+\gl y)x_2+\gd (x_2y)\\
&=\gd (x_1x_2)+\gl (yx_2)+\gd (x_2y)=\gl (yx_2)+\gd(x_2y)+\gl (x_2x_1)\\
&=x_1 (y x_2) + x_2 (\gd y+\gl x_1)=x_1 (y x_2)+x_2(yx_1).
\end{align*}
\medskip

\noindent
{\bf Case II}.  $ab= 0,$ for some $a,b\in \{x_1,x_2,y\}.$
\smallskip

\noindent
{\bf Subcase II1}.  $ac=0,$ for all $c\in\{x_1,x_2,y\}\sminus\{a\}.$
\smallskip

\noindent
Of course we may assume that $a=x_1,$ and that $yx_2\ne 0.$ We must show that
\begin{equation}\label{=}
(x_2y)x_1=x_1(yx_2).
\end{equation}
If $x_2$ and $y$ have the same type, then $x_2y$ and $yx_2$ are
linear combinations of~$x_2$ and $y,$ so both sides in equation
\eqref{=} are equal to~$0.$  If $x_2$ and $y$ are of different
types, then, by Theorem \ref{almostcomm}(i), both sides of equation
\ref{=} equal~$0.$
\smallskip

\noindent
{\bf Subcase II2}.  $x_1, x_2$ are of type $(\gl,\gd),$ and $y$ is of type $(\gd,\gl).$
\smallskip

\noindent
If $x_1x_2=0,$ then, by Theorem \ref{almostcomm}(i), $(x_iy)x_j=x_i(yx_j)=0,$ for $\{i,j\}=\{1,2\},$
so  equation \eqref{fle} holds.

Suppose $x_1y=0.$  Then
\begin{align*}
(x_1 y) x_2 + (x_2 y) x_1&=(x_2 y) x_1=\gd (x_2y)=\gl (yx_2)\\
&=x_1 (y x_2)=x_1 (y x_2) + x_2 (y x_1).
\end{align*}
\smallskip

\noindent
{\bf Subcase II3}.  $x_1, y$ are of type $(\gl,\gd),$ and $x_2$ is of type $(\gd,\gl).$
\smallskip

\noindent
If $x_1y=0,$ then
\begin{align*}
&(x_1 y) x_2 + (x_2 y) x_1=(x_2 y) x_1=\gd (x_2y)\\
&=\gl (yx_2)=x_1(yx_2)=x_1(yx_2)+x_2(yx_1).
\end{align*}
By symmetry, equation \eqref{fle} holds if $x_2y=0.$

Finally assume $x_1x_2=0,$ then
\begin{align*}
&(x_1 y) x_2 + (x_2 y) x_1=(\gd x_1+\gl y)x_2+\gd (x_2y)=\gl (yx_2)+\gd (x_2y)\\
&=\gl (yx_2)+x_2(\gd y+\gl x_1)=x_1(yx_2)+x_2(yx_1).
\end{align*}
\end{proof}

\begin{cor}\label{KMt01}
For any PAJ $A,$ and any idempotent $a\in A,$ we have
$L_a^2 - L_a = R_a^2 - R_a$.
\end{cor}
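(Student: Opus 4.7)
The plan is to derive the identity directly from flexibility (Theorem \ref{flex2}), which we now have in hand for any PAJ. No structural theory is needed beyond that; this really is a formal consequence of the flexible law applied to an idempotent.

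First, I recall the linearization of flexibility (equation \eqref{fle}):
\begin{equation*}
(x_1 y) x_2 + (x_2 y) x_1 = x_1 (y x_2) + x_2 (y x_1)
\end{equation*}
which holds for all $x_1, x_2, y \in A$ by Theorem \ref{flex2}.

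Next, I specialize by taking $x_1 = y = a$ and $x_2 = z$, where $z \in A$ is arbitrary. The left-hand side becomes $(a\cdot a)z + (za)a = a z + (za)a$ (using $a^2 = a$), and the right-hand side becomes $a(az) + z(a\cdot a) = a(az) + za$. In operator notation on $z$, this reads
\begin{equation*}
L_a z + R_a^2 z = L_a^2 z + R_a z,
\end{equation*}
which rearranges to $(L_a^2 - L_a)z = (R_a^2 - R_a)z$. Since $z$ was arbitrary, $L_a^2 - L_a = R_a^2 - R_a$ as operators.

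There is really no obstacle: the only nontrivial input is flexibility, which has already been established. The substitution $x_1 = y = a$ is exactly the one that makes $a^2 = a$ collapse two of the four terms into linear ones, leaving precisely the desired equality.
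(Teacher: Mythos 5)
Your proof is correct and takes essentially the same route as the paper: the paper also deduces the identity from flexibility (Theorem \ref{flex2}), citing \cite[Lemma~2.3]{RoSe2} for the computation, which is exactly the linearization-and-substitution argument you carry out explicitly.
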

\begin{proof}  Since $A$ is flexible, this follows from \cite[Lemma~2.3]{RoSe2}.
\end{proof}

\section{Properties of primitive axes}\label{lowdim}$ $

In this section we delve deeper into properties of primitive axes. This
depends on our determining the structure of 2-generated PAJ's,
having classified the noncommutative ones.

The following concept is useful (and will be needed) for arbitrary
PAJ's. Following \cite[P.~95]{HRS}, for any primitive axis $a$ we define the function
\[
\gvp_a\colon A\to\ff,
\]
where $\gvp_a(y)a$ is the projection of $y$ onto the 1-eigenspace
$\ff a$ of $a$.

\begin{prop}\label{sym}
$\gvp_a(b)=\gvp_b(a)$ for any primitive axes $a,b\in A$ under any of
the following conditions:
 \begin{enumerate}\eroman
\item
$ab = 0$, in which case $\gvp_a(b)=\gvp_b(a) = 0;$

\item
$a$ or $b$ has type $(\gl,\gd)$ with $\gl \ne \gd;$

\item
$\langle\langle a,b\rangle\rangle$ is commutative, and $a,b$ are of the same type.
\end{enumerate}
\end{prop}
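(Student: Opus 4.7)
The plan is to reduce the problem in all three cases to a computation inside the $2$-generated subalgebra $B := \langle\langle a,b\rangle\rangle$, invoking the classification of such algebras: Remark \ref{ugen} together with Example \ref{try1} for the noncommutative case, and Remark \ref{B} for the commutative case.

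For (i), if $a$ has type $(\lambda,\delta)$, write $b = \varphi_a(b)\,a + b_0 + b_{\lambda,\delta}$. Then
\[
0 = ab = \varphi_a(b)\,a + \lambda\,b_{\lambda,\delta},
\]
and since $a \notin A_{\lambda,\delta}(a)$, this forces $\varphi_a(b)=0$. Since $ab = 0 \iff ba = 0$ for primitive axes of Jordan type (as noted in the setup), the symmetric argument applied to $ba = 0$ yields $\varphi_b(a)=0$.

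For (ii), after disposing of the case $ab = 0$ via (i), assume without loss of generality that $a$ has noncommutative type $(\lambda,\delta)$, so $B$ is a noncommutative $2$-generated PAJ. By Remark \ref{ugen}, $B$ is either the algebra of \cite[Example~2.6(1)]{RoSe1} or the exceptional algebra $A_{\operatorname{exc},3}(\{a,b\},\lambda)$ of Example \ref{try1}. In the first case, $b$ also has type $(\lambda,\delta)$ and $ab = \delta a + \lambda b$; comparing with $ab = \varphi_a(b)\,a + \lambda b_{\lambda,\delta}$ after substituting $b = \varphi_a(b)\,a + b_0 + b_{\lambda,\delta}$ gives $\delta\,\varphi_a(b) = \delta$, hence $\varphi_a(b)=1$, and symmetrically $\varphi_b(a)=1$. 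In the exceptional case, the explicit basis $\{a,b,y\}$ of Example \ref{try1} yields $a(b-y)=0$ together with $ay = \lambda y$ and $ya = \delta y$, so the decomposition of $b$ with respect to $a$ is $b = 0\cdot a + (b-y) + y$, giving $\varphi_a(b)=0$; the computation with $a,b$ interchanged gives $\varphi_b(a) = 0$.

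For (iii), $B$ is a $2$-generated commutative PAJ whose generators are both of type $\lambda$, so by Remark \ref{B}, $B \cong B(\lambda,\varphi)$ with $\sigma := ab - \lambda(a+b)$ satisfying $a\sigma = \gamma a$ and $b\sigma = \gamma b$ for the common scalar $\gamma = (1-\lambda)\varphi - \lambda$. Substituting $b = \varphi_a(b)\,a + b_0 + b_\lambda$ into $\sigma = ab - \lambda(a+b)$ gives
\[
\sigma = \bigl(\delta\,\varphi_a(b) - \lambda\bigr)\,a - \lambda\,b_0,
\]
while the relation $a\sigma = \gamma a$ forces $\sigma \in \gamma a + A_0(a)$. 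Matching the $\ff a$-component yields $\varphi_a(b) = (\gamma + \lambda)/\delta$. The roles of $a$ and $b$ in $B(\lambda,\varphi)$ are symmetric and produce the same $\gamma$, so the identical calculation gives $\varphi_b(a) = (\gamma + \lambda)/\delta$, and the two values coincide. No step is genuinely hard once the appropriate classification is in hand; the only real care needed is in tracking the noncommutative eigenspace decomposition in (ii).
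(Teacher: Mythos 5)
Your proof is correct and follows essentially the same route as the paper: parts (i) and (ii) reduce to the classification of $2$-generated noncommutative PAJ's via Remark \ref{ugen} and Example \ref{try1}, exactly as in the paper's own proof (which observes $b=a+(b-a)$ with $b-a\in A_{\gl,\gd}(a)$ in the first case and $b=(b-y)+y$ in the exceptional case). The only divergence is in (iii), where the paper simply cites \cite{HRS}, whereas you carry out the computation inside $B(\gl,\varphi)$ directly; your value $\gvp_a(b)=(\gc+\gl)/(1-\gl)=\gvp_b(a)$ agrees with the paper's equation \eqref{eqg} specialized to $\gl=\gl'$, so this is a harmless (and self-contained) substitute for the citation.
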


\begin{proof} (i) Since $ab=0,$ also $ba=0,$ so the projections are $0.$

(ii) We may assume that  $a$ is of  type $(\gl,\gd).$ By Remark
\ref{ugen}, if $b$~has type~$(\gl,\gd)$, then $\gvp_a(b) = \gvp_b(a)
= 1$ since $b = a +(b-a),\ a=b+(a-b),$ and $a-b\in
A_{\gl,\gd}(a)\cap A_{\gl,\gd}(b).$ Otherwise, by Example
\ref{try1}, $\gvp_a(b) = \gvp_b(a) = 0$ since $b = (b-y) + y$ and
$a=(a-y)+y.$

(iii) This appears in \cite{HRS}.
 \end{proof}

Equality  fails in $B(\gl, \varphi)$ (see Example \ref{non}(i)), when $\gl \ne \half.$
\begin{lemma}\label{uni25}
Let $A=B(\gl,\gvp),$ and assume $a, b\in A$ are primitive axes, with
$a$ of type~$\gl$ and  $b$ of type $1-\gl.$ If
$\gvp_a(b)=\gvp_b(a),$ then $\gl = \half.$
 \end{lemma}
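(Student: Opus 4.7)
The plan is to exploit the self-duality $B(\gl,\gvp)\cong B(1-\gl,\gvp')$ with $\gvp'=(1-\gl)\gvp/\gl$, induced by the involution $x\mapsto\mathbf{1}-x$ where $\mathbf{1}=\gc^{-1}\gs$ is the identity element and $\gc=(1-\gl)\gvp-\gl$. I first treat the main case $\gc\ne 0$ (so that $\mathbf{1}$ exists in $A$); the degenerate case $\gc=0$ should be handled separately by showing that in that situation $A$ admits no primitive axis of type $1-\gl$, so the hypothesis is vacuous.

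First I set $a':=\mathbf{1}-b$, which is a primitive axis of type $\gl$ by Example~\ref{non}(i). Since $a(\mathbf{1}-a)=a-a=0$, we have $\mathbf{1}-a\in A_0(a)$, so $\gvp_a(\mathbf{1})=1$, and hence
\[
\gvp_a(b)=\gvp_a(\mathbf{1})-\gvp_a(a')=1-\gvp_a(a').
\]
By Proposition~\ref{sym}(iii) applied to the two type-$\gl$ primitive axes $a,a'$, the common value $\eta:=\gvp_a(a')=\gvp_{a'}(a)$ is symmetric.

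Next I verify the duality formula on the level of structure constants: a direct computation from the defining relations of $B(\gl,\gvp)$ (applied to a pair $c,c'$ of type-$\gl$ generators with $\gvp_c(c')=\eta$) shows that the type-$(1-\gl)$ axes $\mathbf{1}-c,\mathbf{1}-c'$ generate the dual algebra $B(1-\gl,\eta')$ with $\eta'=(1-\gl)\eta/\gl$ (equivalently, $\gc'=\gc+2\gl-1$). Applying this inside the subalgebra $\langle\langle a,a'\rangle\rangle$ gives $\gvp_{\mathbf{1}-a}(\mathbf{1}-a')=(1-\gl)\eta/\gl$. Then Proposition~\ref{sym}(iii) applied to the two type-$(1-\gl)$ axes $\mathbf{1}-a$ and $b=\mathbf{1}-a'$ yields
\[
\gvp_b(\mathbf{1}-a)=\gvp_{\mathbf{1}-a}(b)=\frac{1-\gl}{\gl}\,\eta,
\]
and combined with $\gvp_b(\mathbf{1})=1$ (same reasoning as before) this gives
\[
\gvp_b(a)=\gvp_b(\mathbf{1})-\gvp_b(\mathbf{1}-a)=1-\frac{1-\gl}{\gl}\,\eta.
\]

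Finally, equating $\gvp_a(b)$ with $\gvp_b(a)$ produces $\eta=(1-\gl)\eta/\gl$, i.e.\ $\eta(2\gl-1)=0$, whence $\gl=\half$ provided $\eta\ne 0$. The main obstacle will be disposing of the residual possibility $\eta=0$: one would argue that $\gvp_a(a')=0$ corresponds to a degenerate configuration (the subalgebra $\langle\langle a,a'\rangle\rangle$ collapsing into one of the 2-dimensional cases of Example~\ref{non}(i), or the axes falling in different components of the axial graph of $A$), and rule it out under the lemma's hypotheses---either directly from the structure of $B(\gl,\gvp)$ or by invoking Theorem~H to replace $a'$ with a Miyamoto-conjugate primitive axis of type $\gl$ yielding a nonzero value of $\gvp_a$.
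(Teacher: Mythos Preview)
Your approach is more roundabout than the paper's and leaves real gaps. The paper's proof is two lines: it simply invokes the identity
\[
(\gvp_a(b)-1)(1-\gl)=(\gvp_b(a)-1)\gl=\gc_{a,b}
\]
from \cite[Proposition~2.10(4)]{RoSe1} (this is equation~\eqref{eqg} with $\gl'=1-\gl$). Setting $\gvp_a(b)=\gvp_b(a)=:p$ gives $(p-1)(1-\gl)=(p-1)\gl$; if $p\ne 1$ then $\gl=\half$, while $p=1$ forces $\gc_{a,b}=0$, whence $\gl=\gl'$ by \cite[Proposition~2.12(iii)]{RoSe1}, again $\gl=\half$. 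No duality, no case $\eta=0$ to chase.

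Your detour through $a'=\mathbf{1}-b$ and the duality formula $\gvp_{\mathbf{1}-c}(\mathbf{1}-c')=(1-\gl)\gvp_c(c')/\gl$ is in fact equivalent to rederiving the displayed identity above: your expressions $\gvp_a(b)=1-\eta$ and $\gvp_b(a)=1-(1-\gl)\eta/\gl$ satisfy it automatically. So the ``different route'' collapses to the same relation, but with two admitted holes. The $\gc=0$ case you defer is indeed vacuous (Lemma~\ref{gc=0} shows all axes have type $\gl$ there), so that one is harmless. The $\eta=0$ case, however, is not handled by your sketch: talk of ``degenerate configurations'' or Miyamoto conjugates does not pin it down. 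The clean way to close it is to note that $\eta=0$ means $\gvp_a(b)=1$, and for $\gl\ne\half$ Proposition~\ref{c+}(3)(vi) gives $\gvp_a(b)=1-\half\gl\ne 1$ (or, in the degenerate subcase $b=\mathbf{1}-a$, one gets $a'=a$ and $\eta=1$). Either way, $\eta=0$ cannot occur---but you do not establish this, and the mechanisms you propose for ruling it out are not the right ones.
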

 \begin{proof}
Assume that $\gl\ne\half.$  By \cite[Prop.~C]{RoSe1}, $\dim A=3.$
By \cite[Proposition 2.10(4)]{RoSe1}, taking $\gl' = 1-\gl$, we have
\[
(\gvp_a(b) -1)(1-\gl) = (\gvp_b(a)-1)\gl = \gc.
\]
Suppose that $\gvp_a(b)=\gvp_b(a).$
If $\gvp_a(b)=1,$ then $\gc=0,$ so by \cite[Proposition 2.12(iii)]{RoSe1},
$\gl=\gl'.$  Otherwise $\gl=\half,$ a contradiction.
\end{proof}

Ironically, the structure of the
CPAJ's is more complicated than the noncommutative PAJ's, which we
have already classified in Theorem~\ref{almostcomm}. The first step
is to determine all of the idempotents of a 2-generated CPAJ.

\subsection{The idempotents of CPAJ's of dimension $2$}$ $
\medskip

We have already
considered the easy case of noncommutative PAJ's  of dimension $2$.

\begin{lemma}\label{dim2}
Suppose that $X=\{a,b\}$ with $ab\ne 0,$
that $A=\lan\lan X\ran\ran$ is a CPAJ of dimension $2$
and that $a$ is of type $\gl.$   Then $b$ is also of type $\gl,$ and
\begin{enumerate}
\item
$\gl\in\{-1,\half\}$ and $ab=\gl a+\gl b.$

\item
If $\gl=\half,$ the idempotents in $A$ are $c_{\mu}:=\mu a+(1-\mu)b,\ \mu\in\ff,$
and they are all primitive axes of type $\half,$ and $\lan\lan a, c_{\mu}\ran\ran=A,$ for $c_\mu\ne a.$
Letting $x=a-b,$ we have
\[\tag{$*$}
a^{(\gt_b\gt_a)^k}=a+2kx,\qquad a^{(\gt_b\gt_a)^k\gt_b}=b-(2k+1)x,
\]
and $a^{\gt_{c_{\mu}}} = 2c_{\mu}-a =(2\mu -1)a + 2(1-\mu)b,$
in particular $a^{\gt_{1/2}}=b.$

\item
If $\gl=-1\ne\half,$ then the idempotents in $A$ are $a,b,-a-b,$ they are primitive axes of type $-1,$ and
$-a-b = b^{\gt_a} = a^{\gt_b}.$
\end{enumerate}
\end{lemma}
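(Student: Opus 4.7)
The starting point is Example~\ref{non}(i), which classifies two-dimensional two-generated CPAJs of the form $B(\gl,\gvp)$ with $ab\ne 0$: such an algebra forces $\gl\in\{-1,\half\}$, $\gs=0$, and the type of $b$ equal to the type of $a$. Applied to the present $A$, this yields part~(1) together with the formula $ab=\gl(a+b)$, read off from $\gs=ab-\gl a-\gl b=0$.

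For part~(2), with $\gl=\half$, the plan is to parametrize candidate idempotents as $c=\ga a+\gb b$ and expand $c^2$ using $ab=\half(a+b)$; matching coefficients gives $c^2=\ga(\ga+\gb)a+\gb(\ga+\gb)b$, so $c^2=c$ forces $\ga+\gb=1$ unless $c=0$. Thus the nonzero idempotents are precisely the $c_\mu=\mu a+(1-\mu)b$. Setting $x=a-b$, a short calculation yields the clean multiplication rule $c_\mu c_{\mu'}=c_{(\mu+\mu')/2}$, from which $c_\mu x=\half x$ drops out immediately. Since $\dim A=2$, this forces $A=\ff c_\mu\oplus\ff x$ with $A_0(c_\mu)=0$ and $A_{1/2}(c_\mu)=\ff x$, so every $c_\mu$ is a primitive axis of type $\half$; moreover $a$ and $c_\mu$ are linearly independent whenever $\mu\ne 1$, so together they generate $A$. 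For the Miyamoto formulas, every $\gt_{c_\mu}$ fixes $c_\mu$ and negates $x$; computing $\gt_a(b)=2a-b$ and $\gt_b(a)=2b-a$ and iterating yields $(*)$ by induction on $k$, while writing $a=c_\mu+(1-\mu)x$ directly gives $a^{\gt_{c_\mu}}=2c_\mu-a$, which specializes to $b$ at $\mu=\half$.

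For part~(3), with $\gl=-1$, the same parametrization $c=\ga a+\gb b$ combined with $ab=-(a+b)$ produces the quadratic system $\ga(\ga-2\gb-1)=0=\gb(\gb-2\ga-1)$, whose only nonzero solutions are $a$, $b$, and $-a-b$. To verify that $-a-b$ is a primitive axis of type $-1$, I would compute its action on $x=a-b$, yielding $A=\ff(-a-b)\oplus\ff x$ with $x$ in the $(-1)$-eigenspace. For the Miyamoto formulas, I would diagonalize $L_a$: its matrix in the basis $\{a,b\}$ has $(-1)$-eigenvector $a+2b$, and decomposing $b=-\half a+\half(a+2b)$ gives $\gt_a(b)=-\half a-(\half a+b)=-a-b$. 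Symmetry (swapping the roles of $a$ and $b$) supplies $\gt_b(a)=-a-b$.

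The main obstacle is the induction in part~(2), where one must simultaneously track the base idempotent and the coefficient of $x$ through alternating applications of $\gt_a$ and $\gt_b$; the observation that simplifies everything is that $x=a-b$ lies in the $\gl$-eigenspace of every nontrivial idempotent (both for $\gl=\half$ and for $\gl=-1$), so all the Miyamoto involutions act on $\ff x$ by $-1$. Once this is noted, the remaining work reduces to direct computation in a two-dimensional algebra with an explicit multiplication table.
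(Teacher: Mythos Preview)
Your proof is correct and complete. The approach differs from the paper's in that the paper proceeds almost entirely by citation: it invokes \cite[Proposition~C(1)]{RoSe1} and \cite{HRS} for part~(1), appeals to Lemma~\ref{p1} (the structure of $\uu_E(\gl)$, which for $\gl=\half$ is exactly this algebra) for the idempotent classification and primitivity in part~(2), quotes \cite[Remark~2.14]{RoSe1} for the formulas~$(*)$, and defers part~(3) to \cite[Lemma~3.1.8]{HSS}. You instead carry out all the computations directly from the multiplication table $ab=\gl(a+b)$: solving $c^2=c$ by hand, verifying the eigenspace decomposition for each idempotent, and iterating the Miyamoto involutions explicitly. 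Your route is more self-contained and arguably more transparent for a two-dimensional algebra; the paper's route is terser and ties the result into the general $\uu_E(\gl)$ framework. One small point worth making explicit in your write-up: when you assert that $c_\mu$ (for $\gl=\half$) and $-a-b$ (for $\gl=-1$) are primitive axes, you should record the fusion check $x^2\in\ff c_\mu+A_0(c_\mu)$, which in both cases is a one-line computation ($x^2=0$ when $\gl=\half$, and $x^2=-3(-a-b)$ when $\gl=-1$).
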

\begin{proof}
By \cite[Proposition C(1)]{RoSe1}, $b$ is also of type $\gl.$
\medskip

(1)    Appears in \cite{HRS}.
\medskip

(2)  By Lemma \ref{p1} the primitive axes in $A$ are as given. ($*$)
holds by \cite[Remark~2.14]{RoSe1}.  Let $c:=c_{\mu}.$ By Lemma
\ref{p1}, $a=c+a-c,$ with $a-c\in A_{1/2}(c),$ so $a^{\gt_c} = 2c-a
= (2\mu -1)a + 2(1-\mu)b.$
\medskip

(3)  See \cite[Lemma 3.1.8]{HSS}.
\end{proof}

\begin{example}\label{Uprime}
 \cite{HRS} gives names to $\ff \times
\ff$ and $\uu_2(\half)$. In its classification of CPAJ's of
dimension 2, \cite{HRS} defines one more 2-dimensional algebra
$3C(-1)^\times$, as the algebra spanned by primitive axes $a$ and $b$ with $ab
=-a-b.$  So in analogy to Definition~\ref{uuI},  we define an
algebra $\uu_E' $  to be the algebra having as a basis the idempotents
$E:=\{ e_i: i \in I \}$, with multiplication given by
\[
e_ie_j =   -e_i  - e_j , \qquad i\ne j.
\]
Here the $(-1)$-eigenspace of any idempotent $a\in E$
is $\sum_{a\ne e\in E} \ff(a+2e)$ since
\[
a(a+2e) = a - 2a - 2e =-(a+2e).
\]
It is easy to check that $a$ is a primitive axis in $U'_E.$
Thus $3C(-1)^{\times} = \uu_{\{a,b\}}'$.
\end{example}

\subsection{The idempotents of 2-generated
CPAJ's of dimension $3$}\label{sub 2-gen}$ $
\medskip

We can also obtain rather precise information about the primitive
axes of 2-generated CPAJ's of dimension 3. Suppose throughout this subsection that
$A = \langle\langle a,b\rangle\rangle $ is a CPAJ with $\dim A=3,$
where $a,b$ are primitive axes of respective types $\gl, \gl'$. As
in \cite{RoSe1}, inspired by \cite{HRS} we let
\[
\gs =ab-\gl'a-\gl b
\]
and
\begin{equation}\label{eqg}
\gc:=\ga_b(1-\gl)-\gl'=\gb_a(1-\gl')-\gl\in \ff,
\end{equation}
where $\ga_b=\gvp_a(b),$ and $\gb_a=\gvp_b(a),$ cf.~\cite[Proposition 2.10(iv)]{RoSe1}.
By  \cite[Proposition~2.12]{RoSe1},  $A : = \ff a + \ff b + \ff \gs,$
and $\gs\ne 0,$ since $\dim A=3.$

 Recall from \cite[Proposition 2.12(v)]{RoSe1} that $\gl = \gl'$
or $\gl+\gl' = 1.$   If $\gl = \half$, then all primitive axes of $A$ are
of type $\half.$

\begin{lemma}\label{gc=0}
Assume $\gc=0.$  Then
\begin{enumerate}
\item
$\gl=\gl'\in\{\half, -1\}.$

\item
If $\gl=\half,$ then the idempotents in $A$ are
\[
c_{\mu}:=\mu a+(1-\mu)b+2\mu(1-\mu)\gs,
\]
which are all primitive axes of type $\half,$ and $\lan\lan a, c_{\mu}\ran\ran=A,$ for all $c_{\mu}\ne a.$
We have  $a^{\gt_c}=b,$ where $c=\half a+\half b+\half \gs.$

\item
If $\gl=-1\ne\half,$ then the idempotents in $A$ are $a, b, -a-b+2\gs.$
These are all primitive axes of type $-1,$ and $c=a^{\gt_b}=b^{\gt_a},$ for $c=-a-b+2\gs.$
\end{enumerate}
\end{lemma}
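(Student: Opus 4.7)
The plan is to first eliminate the possibility $\gl\ne\gl'$ and pin down $\gl\in\{\half,-1\}$ in part~(1), then within the resulting $B(\gl,\gvp)$-setup to enumerate idempotents by solving $y^2=y$, and finally to verify primitivity and the Miyamoto action by direct eigenspace computation.

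\smallskip

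\emph{For part (1).} Equation \eqref{eqg} with $\gc=0$ reads $\ga_b(1-\gl)=\gl'$ and $\gb_a(1-\gl')=\gl$. By \cite[Proposition~2.12(v)]{RoSe1}, either $\gl=\gl'$ or $\gl+\gl'=1$. In the latter case, substituting $\gl'=1-\gl$ yields $\ga_b=\gb_a=1$ (using that $\gl,1-\gl\ne 0$), so $\gvp_a(b)=\gvp_b(a)$; Lemma~\ref{uni25} then forces $\gl=\half$, whence $\gl'=\half=\gl$, contradicting $\gl\ne\gl'$. Hence $\gl=\gl'$ and $A\cong B(\gl,\gvp)$ as in Remark~\ref{B}, where $\gc=0$ supplies the relations $a\gs=b\gs=0$ and $\gs^2=0$. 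Since $\dim A=3$ we have $\gs\ne 0$; the permissible values $\gl\in\{\half,-1\}$ then follow from Example~\ref{non}(i).

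\smallskip

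\emph{For parts (2) and (3).} Write a general element of $A$ as $y=\alpha a+\beta b+\gamma\gs$. Using $ab=\gs+\gl(a+b)$ together with $a\gs=b\gs=\gs^2=0$, one obtains
\[
y^2=(\alpha^2+2\alpha\beta\gl)a+(\beta^2+2\alpha\beta\gl)b+2\alpha\beta\gs.
\]
Imposing $y^2=y$ gives $\gamma=2\alpha\beta$ together with $(\alpha-\beta)(\alpha+\beta-1)=0$. The diagonal branch $\alpha=\beta$ reduces to $\alpha\bigl((1+2\gl)\alpha-1\bigr)=0$, contributing $\half a+\half b+\half\gs$ when $\gl=\half$ and $-a-b+2\gs$ when $\gl=-1$. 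The affine branch $\alpha+\beta=1$ reduces to $(1-2\gl)(\alpha-1)=0$: for $\gl=\half$ every $\mu\in\ff$ yields $c_\mu=\mu a+(1-\mu)b+2\mu(1-\mu)\gs$, whereas for $\gl=-1$ only $\alpha\in\{0,1\}$ survives, recovering $a$ and $b$.

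\smallskip

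Primitivity of each listed idempotent follows by applying Lemma~\ref{e}, or directly: for $\gl=\half$, any $c_\mu\ne a$ together with $a$ spans $A$ (the coefficient determinant in the basis $\{a,b,\gs\}$ is nonzero), so $\lan\lan a,c_\mu\ran\ran=A$, and the $2$-generated CPAJ theory of \cite{RoSe1} supplies primitivity; in the $\gl=-1$ case the roles of $a,b,-a-b+2\gs$ are interchangeable. For the Miyamoto formulas I decompose the target in eigenspaces: with $\gl=\half$ and $c=\half a+\half b+\half\gs$, check $c(a-b)=\half(a-b)$ and $c\gs=0$, giving $a=c+(-\half\gs)+\half(a-b)$, whence $a^{\gt_c}=c-\half\gs-\half(a-b)=b$; with $\gl=-1$, use $\gb_a=-\half$ to write $a=-\half b+\gs+(a+\half b-\gs)$, and the identities $b\gs=0$ and $b(a+\half b-\gs)=-(a+\half b-\gs)$ then yield $a^{\gt_b}=-a-b+2\gs$, with $b^{\gt_a}$ equal by symmetry. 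The main obstacle is the bookkeeping in part~(1)---in particular, closing off the case $\gl\ne\gl'$ through Lemma~\ref{uni25}; once $A\cong B(\gl,\gvp)$ with $\gc=0$ is secured, the remainder is routine polynomial algebra in three generators.
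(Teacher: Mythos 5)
Your overall route matches the paper's: pin down $\gl=\gl'\in\{\half,-1\}$, enumerate idempotents by solving $y^2=y$ in the basis $\{a,b,\gs\}$ using $a\gs=b\gs=\gs^2=0$, then verify primitivity and the Miyamoto formulas. Your enumeration is correct (the paper reaches the same list by passing modulo $\ff\gs$ and invoking Lemma~\ref{dim2}(2)), and your computations of $a^{\gt_c}=b$ and $a^{\gt_b}=-a-b+2\gs$ check out. One small slip: on the affine branch the condition is $(1-2\gl)\,\alpha(\alpha-1)=0$, not $(1-2\gl)(\alpha-1)=0$; your stated conclusion ($\alpha\in\{0,1\}$ when $\gl=-1$, all $\mu$ when $\gl=\half$) is nevertheless the right one.

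The genuine gap is the primitivity of $c_\mu$ for general $\mu$ in part (2). Your justification --- that $\lan\lan a,c_\mu\ran\ran=A$ and ``the $2$-generated CPAJ theory of \cite{RoSe1} supplies primitivity'' --- does not work: the assertion that every nontrivial idempotent of a $2$-generated CPAJ is a primitive axis is precisely Theorem~\ref{axes}, whose proof (case (4)) cites this very lemma, so the appeal is circular; and knowing that $a$ and $c_\mu$ generate $A$ says nothing about $c_\mu$ being semisimple with the right eigenvalues and fusion rules. You mention Lemma~\ref{e} as an alternative, but that still requires exhibiting the eigenspace decomposition, which you never do. The missing (routine) computation is the one the paper carries out: $A_1(c_\mu)=\ff c_\mu$, $A_0(c_\mu)=\ff\gs$, $A_{\half}(c_\mu)=\ff\bigl(a-b+2(1-2\mu)\gs\bigr)$, and the fusion check $\bigl(a-b+2(1-2\mu)\gs\bigr)^2=(a-b)^2=-2\gs\in A_0(c_\mu)$. (In part (3) the idempotent $-a-b+2\gs$ needs no such check once you have shown it equals $a^{\gt_b}$, being the image of a primitive axis under an automorphism; but you assert ``interchangeability'' before establishing that identity, so the order of your argument should be reversed there.)
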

\begin{proof}
(1)  This is \cite[Prop.~2.12(iii)]{RoSe1}.
\medskip

(2)
Since $\gs$ is annihilating,
passing modulo~$\ff\gs$, we  get from Lemma \ref{dim2}(2), that any idempotent must
have image $\mu \bar a + (1-\mu)\bar b$, so the idempotent is of the
form $\mu a + (1-\mu) b+\nu \gs,$  for $\mu,\nu\in\ff.$
We have
\begin{equation}
\begin{aligned}
&\mu  a + (1-\mu) b+\nu\gs   =(\mu a + (1-\mu) b)^2\\
&=\textstyle{ \mu^2 a +(1-\mu)^2 b + 2\mu(1-\mu)(\gs+\half a +\half b)}\\
& =(\mu^2+ \mu(1-\mu))a + ((1-\mu)^2+  \mu(1-\mu))b+2\mu(1-\mu)\gs\\
&=\mu a+(1-\mu)b+2\mu(1-\mu)\gs.
\end{aligned}
\end{equation}
 Hence $\nu = 2\mu(1-\mu).$  It is easy to check that for
\[
c_{\mu}:=\mu a+(1-\mu)b+2\mu(1-\mu)\gs,
\]
we have $A_{\half}(c_{\mu})=\ff\big(a-b+2(1-2\mu)\gs\big).$
Then $A_1(c_{\mu})=\ff c_{\mu},\ A_0(c_{\mu})=\ff\gs.$
Also,
\begin{gather*}
\textstyle{(a-b+2(1-2\mu)\gs)^2=(a-b)^2=a-2ab+b}\\
\textstyle{=a-2(\gs+\half a+\half b)+b=-2\gs\in A_0(c_{\mu}).}
\end{gather*}
Hence, $c_{\mu}$ satisfies the fusion rules and we see that every
idempotent in $A$ is a primitive axis in $A.$  Suppose $c_{\mu}\ne a,$
and set $B:=\lan\lan a, c_{\mu}\ran\ran.$
Then since $ac_{\mu}\in B,$ we see that $ab\in B,$ and then it is easy to check that
$a,b,\gs\in B,$ so $B=A.$ We saw that the type of $c_{\mu}$ is
$\half.$

 Let $c=\half a+\half b+\half \gs.$  Then $A_{\half}(c)=a-b,$ and $a=c-\half\gs+\half(a-b).$
Hence $a^{\gt_c}=c-\half\gs-\half(a-b)=b.$
\medskip

\noindent
(3)  The calculations are easy and we omit them.
\end{proof}

\begin{remark}\label{annaxis}
$A$ has a nonzero annihilating element iff $\gc =0.$ Indeed if
$y\ne 0$ is any annihilating element, then $0 = y \gs = \gc y,$ so
$\gc = 0.$
\end{remark}

\begin{prop}\label{c+}
\begin{enumerate}
\item
$b_{\gl}=\frac{(\gl'-\ga_b)}{\gl}a+b+\frac{\gs}{\gl}.$

\item
Suppose $\gc=0.$  Then
\begin{itemize}
\item[(i)]
$\gl=\gl'\in\{\half,-1\}.$

\item[(ii)]
$\ga_b=\gb_a=\gl+\half.$

\item[(iii)]
$b_{\gl}^2=(\frac{1}{4\gl^2}-1)a-\frac{1}{\gl}\gs.$
\end{itemize}

\item
Suppose $\gc\ne 0.$  Then
\begin{itemize}
\item[(i)]
$\one:=\frac{\gs}{\gc}$ is the identity element of~$A,$ and if
$\gl'\ne\gl,$ then ${\gl'=1-\gl.}$

\item[(ii)]
We have
\begin{equation}\label{bl1}
\begin{aligned}
\textstyle{b_{\gl}^2=\Big(\frac{(\gl'-\ga_b)^2}{\gl^2}+\frac{2\gl'(\gl'-\ga_b)}{\gl}+\frac{2(\gl'-\ga_b)\gc}{\gl^2}\Big)a}\\
\textstyle{+\underbrace{\textstyle{(1+2\gl'-2\ga_b+\frac{2\gc}{\gl})b}}_{=0}+\Big(\frac{\gc^2}{\gl^2}+\frac{2(\gl'-\ga_b)\gc}{\gl}\Big)\one.}
\end{aligned}
\end{equation}

\item[(iii)]
If $\gl=\gl',$ then $\ga_b=\gb_a.$

\item[(iv)]
If $\gl=\half,$ then $\ga_b=\gb_a$ can be any number.

\item[(v)]
If $\gl=\gl'\ne\half,$ then  $\gl\ne -1,$ $\ga_b=\gb_a=\half\gl$ and $\gc=-\half\gl(\gl+1).$

\item[(vi)]
If $\gl'=1-\gl\ne \half,$ then
\[
\textstyle{\ga_b=1-\half\gl,\quad\gb_a=1-\half\gl',\quad\gc=\half\gl(\gl-1).}
\]
\end{itemize}
\end{enumerate}
\end{prop}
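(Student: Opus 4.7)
The strategy anchors on the eigenspace decomposition $b=\ga_b a+b_0+b_\gl$ of $b$ with respect to $L_a$, expresses $b_0$ and $b_\gl$ in the basis $\{a,b,\gs\}$, and then uses the fusion rules together with $b^2=b$ to extract each assertion. For part~(1): since $ab_0=0$ and $ab_\gl=\gl b_\gl$, substituting into $\gs=ab-\gl' a-\gl b$ and collecting by $a$ and~$b_0$ yields $\gs=\gc a-\gl b_0$, hence $b_0=(\gc a-\gs)/\gl$; then $b_\gl=b-\ga_b a-b_0$ simplifies (using the definition of~$\gc$) to the claimed formula. For (2)(i) invoke Lemma~\ref{gc=0}(1); (ii) is immediate from $\ga_b=\gl'/(1-\gl)$ obtained from $\gc=0$, checking both values $\gl\in\{\half,-1\}$; and (iii) follows by substituting $\ga_b=\gl+\half$ into (1) and squaring, using that $\gs$ is annihilating by Remark~\ref{annaxis}, so $\gs^2=a\gs=b\gs=0$ and $ab=\gs+\gl a+\gl b$.

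For (3)(i), the standard identities $a\gs=\gc a$, $b\gs=\gc b$, $\gs^2=\gc\gs$ (from \cite{RoSe1}) show that $\gs/\gc$ acts as the identity on $a,b$, hence on~$A$; the dichotomy $\gl'\in\{\gl,1-\gl\}$ comes from \cite[Prop.~2.12(v)]{RoSe1}. Part (3)(ii) is obtained by squaring the expression in (1) and replacing each product using the six identities above, then rewriting $\gs=\gc\one$ and collecting by basis vectors. The coefficient of $b$ is automatically zero because the fusion rules force $b_\gl^2\in\ff a+A_0(a)$, and $A_0(a)=\ff b_0\subseteq\ff a+\ff\gs$, leaving no room for $b$. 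Part (3)(iii) is immediate from~\eqref{eqg}: when $\gl=\gl'$ it reduces to $\ga_b(1-\gl)=\gb_a(1-\gl)$, so $\ga_b=\gb_a$ since $\gl\ne 1$.

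For (3)(iv)-(vi), the additional input is a single scalar identity $2\ga_b\gl^2+2\gc(\gl-1)=\gl$ obtained by projecting $b^2=b$ onto $A_\gl(a)$, after computing $b_0 b_\gl=\frac{\gc(\gl-1)}{\gl}b_\gl$ directly from $b_0=(\gc a-\gs)/\gl$, $ab_\gl=\gl b_\gl$, and $\gs b_\gl=\gc b_\gl$. In case~(v), with $\gl=\gl'\ne\half$, substituting $\gc=\ga_b(1-\gl)-\gl$ reduces this identity to $2\ga_b(2\gl-1)=\gl(2\gl-1)$, so $\ga_b=\gl/2$ and $\gc=-\gl(\gl+1)/2$; the condition $\gc\ne 0$ then forces $\gl\ne-1$. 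In case~(vi), with $\gl'=1-\gl\ne\half$, substituting $\gc=(1-\gl)(\ga_b-1)$ gives $\ga_b=1-\gl/2$, $\gc=\gl(\gl-1)/2$, and $\gb_a=1-\gl'/2$ by symmetry. In case~(iv), $\gl=\gl'=\half$ makes the scalar identity vacuous, so $\ga_b=\gb_a$ can be arbitrary subject to $\gc\ne 0$.

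The main obstacle is organizing the web of relations among $\ga_b,\gb_a,\gc,\gl,\gl'$ cleanly enough that each subpart follows without circular reasoning; the ``automatic'' vanishing of the $b$-coefficient in (3)(ii), while transparent from the fusion rules, can also be verified algebraically using exactly the same identity that powers (v)-(vi), providing a consistency check on the entire calculation.
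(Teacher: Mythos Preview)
Your proof is correct and follows the same overall strategy as the paper: express $b_\gl$ in the basis $\{a,b,\gs\}$, square it using the multiplication table, and extract scalar constraints from the fusion rules. There are two minor tactical differences worth noting. For (2)(ii) you read off $\ga_b=\gl/(1-\gl)=\gl+\half$ directly from $\gc=0$ and $\gl=\gl'$, which is quicker than the paper's route (the paper first computes $b_\gl^2$ and then sets the $b$-coefficient to zero to obtain $\ga_b$). For the scalar identity driving (3)(iv)--(vi), you project $b^2=b$ onto $A_\gl(a)$ using the clean formula $b_0 b_\gl=\frac{\gc(\gl-1)}{\gl}b_\gl$, whereas the paper instead sets the $b$-coefficient of $b_\gl^2$ in \eqref{bl1} to zero; unwinding $\gc$ in both expressions gives the identical equation $2\ga_b(2\gl-1)=\gl+2\gl'(\gl-1)$, so the two derivations are equivalent. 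Your closing remark that these two manifestations of the same identity serve as a consistency check is exactly right.
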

\begin{proof}
(1)
We have
\[
\ga_ba+\gl b_{\gl}=ab=\gs+\gl'a+\gl b,
\]
so $b_{\gl}$ is as claimed.
\medskip

\noindent
(2)
(i)
This is \cite[Proposition 2.12(iii)]{RoSe1}.
\medskip

\noindent
(ii)
By (1), and (i), and since $\gs v=0,$  for all $v\in A,$

 \begin{equation}\begin{aligned}\notag
 b_{\gl}^2& =\frac{(\gl-\ga_b)^2}{\gl^2}a+b+\frac{2(\gl-\ga_b)}{\gl}ab\\
\notag &=
\frac{(\gl-\ga_b)^2}{\gl^2}a+b+\frac{2(\gl-\ga_b)}{\gl}(\gs+\gl
a+\gl b) \\ \label{bl0} &=
\Big(\frac{(\gl-\ga_b)^2}{\gl^2}+\frac{2\gl(\gl-\ga_b)}{\gl}
\Big)a+(1+2(\gl-\ga_b))b +\frac{2(\gl-\ga_b)}{\gl}\gs.
\end{aligned}\end{equation}
By the fusion rules the coefficient of $b$ in equation \eqref{bl0}
must be $0,$ so $2(\gl-\ga_b)=-1.$ So $\ga_b-\gl=\half$ and
symmetrically, $\gb_a-\gl=\half;$ hence $\ga_b=\gb_a.$  Substitution
$\ga_b=\gl+\half$ in \eqref{bl0} we get
\[
\textstyle{b_{\gl}^2=(\frac{1}{4\gl^2}-1)a-\frac{1}{\gl}\gs.}
\]
\medskip

\noindent
(3)
(i)  This is \cite[Proposition 2.12(v)\&(vi)]{RoSe1}.
\medskip

\noindent
(ii)  Since $\gs=\gc\one,$ we get from (1) that
\begin{gather*}
\textstyle{b_{\gl}^2=\frac{(\gl'-\ga_b)^2}{\gl^2}a+b+\frac{\gc^2}{\gl^2}\one+\frac{2(\gl'-\ga_b)}{\gl}ab+\frac{2(\gl'-\ga_b)\gc}{\gl^2}a
+\frac{2\gc}{\gl}b},
\end{gather*}
so since $ab=\gc\one+\gl' a+\gl b,$ equation \eqref{bl1} holds.
\medskip

\noindent
(iii) This follows from equation \eqref{eqg}.
\medskip

\noindent
(iv) Since $b_{\gl}^2\in\ff a+\ff\one,$ the coefficient of $b$ in equation \eqref{bl1} is $0.$ So
\begin{equation}\label{bl2}
\begin{aligned}
&\gl+2\gl\gl'-2\ga_b\gl+2\gc=0,\text{ iff}\\
&\gl+2\gl\gl'-2\ga_b\gl+2\big(\ga_b(1-\gl)-\gl'\big)=0,\text{ iff}\\
&\gl+2\gl\gl'-2\gl'=4\ga_b\gl-2\ga_b.
\end{aligned}
\end{equation}
If $\gl=\half,$  then, by \cite[Proposition 2.12(iv)]{RoSe1}, $\gl'=\half,$ so by
part 3(iii), $\ga_b=\gb_a.$  We then see that equation \eqref{bl2} holds automatically, so
there are no other restrictions on $\ga_b.$
\medskip

\noindent
(v)
By equation \eqref{bl2} we see that
\begin{equation}\label{gab}
\textstyle{\ga_b=\frac{\gl+2\gl\gl'-2\gl'}{4\gl-2},}
\end{equation}
so if $\gl=\gl',$ we get $\ga_b=\gb_a=\half\gl.$  The formula for
$\gc$ comes from equation~\eqref{eqg}.  But now if $\gl=-1,$ then $\gc=0,$
a contradiction.
\medskip

\noindent (vi) This follows from equation \eqref{gab}, from
symmetry, and from equation~\eqref{eqg}.
\end{proof}

\subsection{The case where $\gc\ne 0$}$ $
\medskip

From this point to the end of subsection \ref{sub 2-gen}, we assume
that $\gc\ne 0.$ Hence $\one:=\frac{\gs}{\gc}$ is an identity element
of $A.$ Note that $\one-a$ is an axis in $A$ of type~$1-\gl$, with
$A_0(\one-a)=\ff a$ and $A_{1-\gl}(\one-a)=A_{\gl}(a).$ We let
\[
b=\ga_b a + \ga'_b (\one - a)+b_{\gl},\quad \ga'_b\in\ff
\]
be the decomposition of $b$ with respect to $a.$  We define
\begin{equation}\label{eq011}
w_b = \ga_b a + \ga'_b (\one - a)  =(\ga_b - \ga'_b) a +  \ga'_b\one.
\end{equation}
Then
\begin{equation}\label{eq012}
w_b^2 =  \ga_b^2 a +{\ga'_b}^2 (\one - a) =
( \ga_b^2 -{\ga'_b}^2 ) a+ {\ga'_b}^2 \one,
\end{equation}

 \begin{equation}\label{eq013}
b_\gl = b - w_b = b + (\ga'_b - \ga_b) a -\ga'_b\one.
\end{equation}

\begin{lemma}\label{2ndaxis}
Write $b =w_b +b_\gl,$ as above, with $b_\gl \in A_\gl(a).$
\begin{enumerate}\eroman
\item
If $\gl'=\gl\ne\half,$  then $\ga_b=\half\gl$ and $\gl\ne -1.$ If $\gl'=1-\gl\ne\half,$
then $\ga_b=1-\half\gl.$

\item
For ${w_e:=\ga_e a+\gb_e(\one-a),}$ an element $e:=w_e+\gr_eb_{\gl}$
is an idempotent in $A,$  iff
\begin{align}\label{5.8.1}
\gb_e &= \textstyle{\frac{1-2\gl \ga_e}{2(1-\gl)},\text{ and}}\\\label{5.8.2}
\gr_e^2b_\gl^2 &= (\gb_e -\gb_e^2) \one +  (\ga_e -\ga_e^2-(\gb_e -{\gb_e}^2 ) ) a.
\end{align}

\item
Suppose $\gl\ne\half,$ then  $\ga'_b=\frac{\gl+1}{2}$
if $\gl'=\gl$ and $\ga'_b=\frac{1-\gl}{2},$ if $\gl'=1-\gl.$

\item
If $\gl\ne\half,$ then in both cases where $\gl=\gl'$ and where $\gl\ne\gl',$
\begin{gather*}
\textstyle{\mu_b:=\ga'_b-{\ga'_b}^2=\frac{1-\gl^2}{4},\quad  \nu_b:=\ga_b-\ga_b^2=\frac{\gl(2-\gl)}{4},\text{ and}}\\
\textstyle{b_{\gl}^2=\frac{(1-\gl^2)}{4}\one+\frac{(2\gl-1)}{4}a.}
\end{gather*}
\end{enumerate}
\end{lemma}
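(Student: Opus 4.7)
My plan is to proceed part by part, with each piece feeding into the next.

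\textbf{Part (i).} This is immediate: it is simply a restatement of Proposition~\ref{c+}(3)(v) for the case $\gl' = \gl$ and Proposition~\ref{c+}(3)(vi) for the case $\gl' = 1-\gl$ (using $\ga_b = \gvp_a(b)$).

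\textbf{Part (ii).} The key observation is that, since $\gc \ne 0$ and $\one = \gs/\gc$, the elements $a,\, \one - a,\, b_\gl$ are pairwise orthogonal eigenvectors of $L_a$ (with eigenvalues $1, 0, \gl$ respectively) and also of $R_a$. In particular $a^2 = a$, $(\one-a)^2 = \one - a$, $a(\one - a) = 0$, $a b_\gl = \gl b_\gl$, and $(\one-a) b_\gl = (1-\gl) b_\gl$. Expanding
\[
e^2 = w_e^2 + 2\gr_e w_e b_\gl + \gr_e^2 b_\gl^2
\]
and using these relations, the $b_\gl$-component of $e^2$ equals $2\gr_e\bigl[\gl\ga_e + (1-\gl)\gb_e\bigr]b_\gl$, while the $b_\gl$-component of $e$ is $\gr_e b_\gl$; matching yields \eqref{5.8.1} (taking $\gr_e\ne 0$; when $\gr_e=0$ the condition is forced by $b_\gl^2\notin\ff b_\gl$). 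Then $b_\gl^2\in\ff a + \ff\one$ by the fusion rules, so the remaining piece of the equation $e^2 = e$ is an identity in $\ff a + \ff\one$. Writing $w_e = (\ga_e - \gb_e)a + \gb_e \one$ and comparing coefficients of $a$ and $\one$ gives precisely \eqref{5.8.2}.

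\textbf{Part (iii).} Apply part (ii) with $e = b$, $\gr_e = 1$, $\ga_e = \ga_b$, $\gb_e = \ga'_b$; then \eqref{5.8.1} reads $\ga'_b = \frac{1 - 2\gl\ga_b}{2(1-\gl)}$. Substituting the two values of $\ga_b$ from (i):
\[
\ga'_b = \frac{1 - \gl^2}{2(1-\gl)} = \frac{1+\gl}{2}\quad\text{if }\gl'=\gl,\qquad
\ga'_b = \frac{(1-\gl)^2}{2(1-\gl)} = \frac{1-\gl}{2}\quad\text{if }\gl'=1-\gl.
\]

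\textbf{Part (iv).} Direct substitution of the values from (i) and (iii) gives
\[
\ga'_b(1-\ga'_b) = \tfrac{1+\gl}{2}\cdot\tfrac{1-\gl}{2} = \tfrac{1-\gl^2}{4}
\]
in the case $\gl'=\gl$, and likewise $\tfrac{1-\gl}{2}\cdot\tfrac{1+\gl}{2} = \tfrac{1-\gl^2}{4}$ in the case $\gl'=1-\gl$, giving $\mu_b$. Similarly $\ga_b(1-\ga_b)$ equals $\frac{\gl}{2}\cdot\frac{2-\gl}{2}$ in the first case and $(1-\frac{\gl}{2})\cdot\frac{\gl}{2}$ in the second, yielding $\nu_b = \frac{\gl(2-\gl)}{4}$ in both. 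Finally, equation \eqref{5.8.2} applied to $e=b$, $\gr_e=1$ gives $b_\gl^2 = \mu_b\one + (\nu_b - \mu_b)a$, and the arithmetic $\nu_b - \mu_b = \frac{\gl(2-\gl) - (1-\gl^2)}{4} = \frac{2\gl - 1}{4}$ produces the claimed formula.

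The only place requiring any care is the expansion in (ii); once that is in hand, parts (iii) and (iv) are purely substitutional, so I do not expect a substantive obstacle.
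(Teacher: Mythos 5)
Your proof is correct and follows essentially the same route as the paper's: part (i) by citing Proposition~\ref{c+}(3)(v)--(vi), part (ii) by expanding $e^2=w_e^2+2\gr_e w_eb_\gl+\gr_e^2b_\gl^2$ and matching the $A_\gl(a)$-component against the $\ff a+\ff\one$ part via the fusion rules, and parts (iii)--(iv) by substitution. (Your parenthetical about $\gr_e=0$ is not quite right as stated, but the paper glosses over the same degenerate case, which is irrelevant to how the lemma is used.)
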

\begin{proof}
(i)  This is Proposition \ref{c+}(3) parts (v) and (vi).
\medskip

(ii) $e$ is an idempotent iff
$w_e + \gr_eb_\gl = e = e^2 = w_e^2 + 2\gr_e w_e b_\gl  + \gr_e^2b_\gl ^2.$ The fusion rules show that
\[
\gr_eb_\gl =2\gr_ew_e b_\gl =  2\gr_e\big(\gl\ga_e b_\gl +(1-\gl)\gb_e b_\gl\big)=
\gr_e\big(2\gl\ga_e+2\gb_e(1-\gl)) b_\gl,
\]
 implying $1 =2\gl\ga_e +2\gb_e(1-\gl),$ so equation \eqref{5.8.1} holds.
Also
\begin{gather*}
\gr_e^2b_\gl^2 =w_e-w_e^2=\ga_e a+\gb_e(\one-a)  -( \ga_e^2 -{\gb_e}^2) a -{\gb_e}^2 \one \\
(\gb_e -{\gb_e}^2) \one +  (\ga_e - \ga_e^2-( \gb_e-{\gb_e}^2 ) ) a,
\end{gather*}
so (ii) holds.
\medskip

(iii)  This follows from equation \eqref{5.8.1} and (i).
\medskip

(iv)
The calculations for $\mu_b$ and $\nu_b$ come from (i) and (iii).
The calculation for $b_{\gl}^2$ come from applying (i) and (iii) to equation \eqref{5.8.2},
since $\gr_b=1.$
\end{proof}

\subsubsection{\underline{The case where $\gc\ne 0$ and  $\gl \ne \half$}}$ $
\medskip

\begin{thm}\label{nothalf}
Suppose that $\gc\ne 0$ and
$\gl \ne \half.$ Then the idempotents in $A$ distict from $0,\one$ are
$a,\, \one-a,\, b,\, \one-b,\, b^{\gt_a}$ and $\one-b^{\gt_a}.$
All these idempotents are primitive axes, and if $\gl'=\gl,$
then $a^{\gt_b}=b^{\gt_a},$ while if $\gl'=1-\gl,$ then $a^{\gt_b}=\one-b^{\gt_a}.$
\end{thm}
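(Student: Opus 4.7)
The plan is to work in the basis $\{a,\, \one-a,\, b_\gl\}$ of $A$, which is available since $\gc\ne 0$ puts $\one=\gs/\gc$ in $A$. Writing a general element as $e=\ga_e a+\gb_e(\one-a)+\gr_e b_\gl$, I will characterize when $e^2=e$ by splitting on whether $\gr_e=0$ or not.

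When $\gr_e=0$, the element $e=\ga_e a+\gb_e(\one-a)$ is a linear combination of the orthogonal idempotents $a$ and $\one-a$, so $e^2=e$ forces $\ga_e,\gb_e\in\{0,1\}$, yielding the four idempotents $0,\, a,\, \one-a,\, \one$, of which only $a$ and $\one-a$ are among the nontrivial ones listed. When $\gr_e\ne 0$, Lemma~\ref{2ndaxis}(ii) gives $\gb_e=(1-2\gl\ga_e)/(2(1-\gl))$ together with $\gr_e^2 b_\gl^2=(\gb_e-\gb_e^2)\one+(\ga_e-\ga_e^2-(\gb_e-\gb_e^2))a$. I substitute the explicit formula for $b_\gl^2$ from Lemma~\ref{2ndaxis}(iv) and match the coefficients of $\one$ and $a$; eliminating $\gr_e^2$ from the two resulting expressions yields $\gl(2-\gl)(\gb_e-\gb_e^2)=(1-\gl^2)(\ga_e-\ga_e^2)$. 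After substituting the expression for $\gb_e$ and expanding, the factor $(1-2\gl)$ appears on both sides and cancels (this is precisely where $\gl\ne\half$ is used), reducing the problem to the quadratic $4\ga_e^2-4\ga_e+\gl(2-\gl)=0$, whose discriminant is $(1-\gl)^2$ and whose roots are $\ga_e=\gl/2$ and $\ga_e=1-\gl/2$. For either root a direct check gives $\gr_e^2=1$, so $\gr_e=\pm 1$, producing exactly four solutions.

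To identify these four solutions, I invoke Lemma~\ref{2ndaxis}(i): $\ga_b=\gl/2$ if $\gl'=\gl$ and $\ga_b=1-\gl/2$ if $\gl'=1-\gl$. Hence the root $\ga_e=\ga_b$ with $\gr_e=1$ recovers $e=b$, while $\gr_e=-1$ gives $e=w_b-b_\gl=b^{\gt_a}$; the other root $\ga_e=1-\ga_b$ with $\gr_e=\pm 1$ gives $\one-b$ and $\one-b^{\gt_a}$, since $e\mapsto \one-e$ flips all three parameters. This accounts for the six nontrivial idempotents $a,\, \one-a,\, b,\, \one-b,\, b^{\gt_a},\, \one-b^{\gt_a}$. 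Each is a primitive axis: $a,b$ are given, $b^{\gt_a}$ is the image of $b$ under an automorphism, and for any primitive axis $c$ of type $\mu$ the element $\one-c$ is a primitive axis of type $1-\mu$ since $A_1(\one-c)=\ff(\one-c)$, $A_0(\one-c)=\ff c$, and the nontrivial eigenspace is preserved.

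For the final claim about $a^{\gt_b}$, I use the parallel formulas $b^{\gt_a}=2w_b-b$ and $a^{\gt_b}=2w_a'-a$, where $w_a'=\gb_a b+\gb'_a(\one-b)$ is the projection of $a$ onto $\ff b+\ff(\one-b)$. Applying Lemma~\ref{2ndaxis}(i),(iii) together with their symmetric counterparts for $\gb_a,\gb'_a$ (by swapping the roles of $a$ and $b$), expanding both sides in the basis $\{a,b,\one\}$ and matching coefficients gives $a^{\gt_b}=b^{\gt_a}$ when $\gl'=\gl$ and $a^{\gt_b}=\one-b^{\gt_a}$ when $\gl'=1-\gl$. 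The main obstacle I anticipate is the polynomial manipulation in the $\gr_e\ne 0$ case, in particular spotting that the apparent quartic in $\ga_e$ collapses after the common factor $(1-2\gl)$ is extracted; once that clean quadratic is obtained the identification of the four idempotents and the verification of the $a^{\gt_b}$ formulas are straightforward linear algebra.
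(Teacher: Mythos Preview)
Your proof is correct and follows the same overall framework as the paper's: both use Lemma~\ref{2ndaxis}(ii) to parametrize idempotents as $e=\ga_e a+\gb_e(\one-a)+\gr_e b_\gl$ and then constrain $\ga_e$. The execution differs in two pleasant ways. First, the paper splits into the subcases $\gl=-1$ (where $\mu_b=0$) and $\gl\ne -1$, and in the latter shows only that the resulting equation in~$\ga_e$ is a genuine quadratic, concluding indirectly that the two roots must be $\ga_b$ and $1-\ga_b$; you instead push the algebra through uniformly, spot the common factor $(1-2\gl)$, and arrive at the explicit quadratic $4\ga_e^2-4\ga_e+\gl(2-\gl)=0$ with visible roots $\gl/2$ and $1-\gl/2$, which is cleaner and needs no case distinction. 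Second, for the identity $a^{\gt_b}=b^{\gt_a}$ (resp.\ $\one-b^{\gt_a}$), the paper argues by type and elimination among the six idempotents, whereas you compute both sides directly in the basis $\{a,b,\one\}$ via the symmetric form of Lemma~\ref{2ndaxis}. Both routes are short; yours is more self-contained, the paper's is slightly less computational. One minor point worth making explicit in your write-up: the check ``$\gr_e^2=1$'' comes from $\gr_e^2\gl(2-\gl)=4(\ga_e-\ga_e^2)=\gl(2-\gl)$, which is immediate unless $\gl=2$, in which case one uses the companion equation $\gr_e^2(1-\gl^2)=4(\gb_e-\gb_e^2)$ instead; since $\{0,2\}\cap\{\pm 1\}=\emptyset$, one of the two always determines $\gr_e^2$.
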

\begin{proof}
We use the notation of Lemma \ref{2ndaxis}.
Let $e=w_e+\gr_eb_{\gl}$ be an idempotent in $A$ as in Lemma \ref{2ndaxis}(ii),
and assume that $e\notin \{0,\one, a,\one-a\}.$
Notice that by equation \eqref{5.8.2} (first for $b$ and then for $e$),
\begin{equation}\label{xi}
\gr_e^2\mu_b=\gb_e-\gb_e^2\text{ and } \gr_e^2\nu_b=\ga_e-\ga_e^2.
\end{equation}
Also, $\gr_e\ne0,$ as $e\notin\{0,\one, a,\one-a\}.$  We show that
there are at most two possibilities for~$\ga_e.$  Then by equation
\eqref{5.8.1}, $\gb_e$ is determined and $\gr_e$ is determined up to
a sign, by equation \eqref{5.8.2}.  But there are two  distinct
possibilities for $\ga_e,$ these are $\ga_b$ and  $1-\ga_b.$ Indeed
\[
\one-b=(1-\ga_b)a+(1-\ga'_b)(\one-a)-b_{\gl},
\]
and $\ga_b\ne\half,$ by Lemma \ref{2ndaxis}(i).  This will show that
${e\in\{ b, b^{\gt_a}, \one-b, \one-b^{\gt_a}\}.}$

Suppose first that $\mu_b=0.$  Then $\gl=-1,$ and $\gb_e=0$ or $1,$
and then by~equation \eqref{5.8.1}, $\ga_e$ is determined.

Suppose next that $\gl\ne -1.$  In particular, $\mu_b\ne 0.$ Set
$\xi_e:=\frac{\nu_b}{\mu_b}=\frac{\gl(\gl-2)}{\gl^2-1}.$ By \eqref{xi},
\begin{gather*}
\textstyle{\ga_e^2-\ga_e=\xi_e(\gb_e^2-\gb_e)=\xi_e\Big(\frac{(1-2\gl \ga_e)^2}{4(1-\gl)^2}-\frac{(1-2\gl \ga_e)}{4(1-\gl)}\Big)}\\
=\textstyle{\xi_e\Big(\frac{\gl^2}{(\gl-1)^2}(\ga_e^2-\ga_e)+\frac{1}{4(\gl-1)^2}-\frac{(1-2\gl \ga_e)}{4(1-\gl)}\Big).}
\end{gather*}
Thus $\ga_e$ solves a quadratic equation whose coefficient of
$\ga_e^2$ is ${\xi_e \frac{\gl^2}{(\gl-1)^2}-1.}$ So if that
coefficient is zero, then
\begin{equation}\label{xii}
\textstyle{\frac{\gl(\gl-2)}{\gl^2-1}=\xi_e=\frac{(\gl-1)^2}{\gl^2}.}
\end{equation}
so $\gl^4-2\gl^3=(\gl^2-2\gl+1)(\gl^2-1)=\gl^4-2\gl^3+2\gl-1.$
Since $\gl\ne \half,$ the equality in equation \eqref{xii} does not hold,
so $\ga_e$ has at most two solutions.

Now if $\gl=\gl',$ then $a^{\gt_b}\in\{a, b, b^{\gt_a}\}.$  Further,
$a^{\gt_b}\ne b,$ as $a\ne b.$ Also $a^{\gt_b}\ne a,$ since by
\cite[Lemma 2.8(2)]{RoSe1}, $a_{\gl'}\ne 0$ (here $a_{\gl'}$ is the
projection of $a$ on~$A_{\gl'}(b)).$  Hence $a^{\gt_b}=b^{\gt_a}.$
A similar argument works when $\gl+\gl'=1.$
\end{proof}

\subsubsection{\underline{The case where $\gc\ne 0,$ and  $\gl =\half$}}$ $
\medskip

We are left with $\gl = \half,$ which in the set-up of \cite{HRS} is
the algebra $B(\half,\ga_b)$ and is rather different.
By \cite[Proposition 2.12(iv)]{RoSe1}, $\gl'=\half.$
Also, $\gc =\half (\ga_b -1),$ which is  nonzero (since $\gc = 0$ is Case II),
implying $\ga_b \ne 1.$ Also $\ga_b=\gvp _a (b) = \gvp _b (a)=\gb_a,$ by
Proposition~\ref{sym}.

We exploit the primitive axis $a' := \one -a$.
\begin{thm}\label{half}
Let $a':=\one-a,$ and set $\nu_b =\ga_b(1-\ga_b).$
\begin{enumerate}\eroman
\item
An element $e:=\ga_e a+\gb _e(\one-a)+\gr_e b_{\gl}$ is a non-trivial
idempotent in $A$ iff $\gb_e=1-\ga_e,$ and
\begin{equation}\label{rho}
\gr_e^2 b_{\gl}^2=(\ga_e-\ga_e^2)\one.
\end{equation}
These idempotents exist iff either
\begin{itemize}
\item[(1)]
$\ga_b=0,$ and then $\ga_e\in\{0,1\}$ and $\gr_e$ is arbitrary, or

\item[(2)]
$\ga_b\ne 0$ and $\frac{\ga_e(1-\ga_e)}{\nu_b}$ is a square in $\ff,$
in which case $\gr_e=\pm\sqrt{\frac{\ga_e(1-\ga_e)}{\nu_b}}.$
\end{itemize}

\item
$b=\ga_b a+(1-\ga_b)(\one-a)+b_{\gl},$ with $b_{\gl}^2=\nu_b\one.$
Any idempotent $e\in A$ is a primitive axis with $A_1(e)=\ff e,$ $A_0(e)=\ff (\one-e),$
and $A_{\half}(e)=\ff x,$ with
\[
x=4\ga_e(1-\ga_e)a-2\ga_e(1-\ga_e)\one+(1-2\ga_e)\gr_eb_{\gl},
\]
if $e\notin\{a,\one-a\},$ while $A_{\half}(e)=\ff b_{\gl},$ if $e\in\{a,\one-a\}.$
\end{enumerate}
\end{thm}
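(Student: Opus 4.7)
The plan is to first verify the decomposition of $b$ and the value of $b_\gl^2$ stated in (ii), then use these to analyze the idempotent equation $e^2=e$ coordinate-wise and finally check that the idempotents produced in (i) are primitive axes.

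\textbf{Step 1 (part (ii), structural identity).} Since $\gl=\gl'=\half$, equation \eqref{5.8.1} with $\ga_e=\ga_b$ and $\gr_e=1$ forces $\ga'_b=1-\ga_b$, so $b=\ga_b a+(1-\ga_b)(\one-a)+b_\gl$. Using $a^2=a$, $(\one-a)^2=\one-a$, $a(\one-a)=0$, and the fusion rules $ab_\gl=(\one-a)b_\gl=\half b_\gl$, the identity $b^2=b$ reduces, after collecting coefficients of $a$, $\one-a$, and $b_\gl$, to $b_\gl^2=\ga_b(1-\ga_b)a+(1-\ga_b)\ga_b(\one-a)=\nu_b\one$. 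This is a short direct computation.

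\textbf{Step 2 (part (i), idempotent equation).} Expand $e^2$ for $e=\ga_e a+\gb_e(\one-a)+\gr_e b_\gl$. The $b_\gl$-component of $e^2-e$ is $\gr_e(\ga_e+\gb_e-1)b_\gl$; if $\gr_e\ne 0$, this forces $\gb_e=1-\ga_e$, while if $\gr_e=0$, then $\ga_e^2=\ga_e$ and $\gb_e^2=\gb_e$, giving $e\in\{0,\one,a,\one-a\}$ (in each of which $\gb_e=1-\ga_e$ still holds, and equation \eqref{rho} is satisfied trivially). The remaining components of $e^2=e$ collapse, using Step 1, to the single scalar relation $\gr_e^2\nu_b=\ga_e(1-\ga_e)$, which is exactly \eqref{rho}. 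Case analysis on $\nu_b$: if $\ga_b=0$ (so $\nu_b=0$), then $\ga_e(1-\ga_e)=0$, forcing $\ga_e\in\{0,1\}$ with $\gr_e$ free; if $\ga_b\ne 0$ (so $\nu_b\ne 0$, since $\ga_b\ne 1$), then $\gr_e^2=\ga_e(1-\ga_e)/\nu_b$ is solvable in $\ff$ iff this quotient is a square, yielding exactly the two values $\pm\sqrt{\ga_e(1-\ga_e)/\nu_b}$.

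\textbf{Step 3 (part (ii), eigenspaces and primitivity).} For $e\in\{a,\one-a\}$ the claim is immediate from the definitions and Step 1. For the remaining idempotents, observe that $\one-e$ is an idempotent with $e(\one-e)=0$, so $\one-e\in A_0(e)$; and check directly that the candidate vector
\[
x=4\ga_e(1-\ga_e)a-2\ga_e(1-\ga_e)\one+(1-2\ga_e)\gr_e b_\gl
\]
satisfies $ex=\half x$, using $ea=\ga_e a+\half\gr_e b_\gl$, $eb_\gl=\half b_\gl+\gr_e\nu_b\one$, and substituting $\gr_e^2\nu_b=\ga_e(1-\ga_e)$ from \eqref{rho}. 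Since $e,\one-e,x$ are nonzero eigenvectors for the distinct eigenvalues $1,0,\half$ of $L_e$, they are linearly independent in the $3$-dimensional algebra $A$, so $A_1(e)=\ff e$, $A_0(e)=\ff(\one-e)$, $A_{\half}(e)=\ff x$.

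\textbf{Step 4 (fusion rules).} The cleanest verification uses the rewriting $x=u(2a-\one)+vb_\gl$ with $u=2\ga_e(1-\ga_e)$ and $v=(1-2\ga_e)\gr_e$. Then $(2a-\one)^2=\one$ (since $4a^2-4a+\one=\one$) and $(2a-\one)b_\gl=2ab_\gl-b_\gl=0$, so $x^2=(u^2+v^2\nu_b)\one\in\ff e+\ff(\one-e)=A_1(e)\oplus A_0(e)$. The other fusion inclusions are automatic from the fact that $\one-e$ is idempotent and $A$ is commutative, so $e$ is a primitive axis.

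The main obstacle is bookkeeping in Step 3: one has to verify that the particular combination $x$ really lies in the $\half$-eigenspace, and the miraculous cancellations only work after substituting the constraint $\gr_e^2\nu_b=\ga_e(1-\ga_e)$ from \eqref{rho}. The identity $(2a-\one)^2=\one$ together with $(2a-\one)b_\gl=0$ in Step 4 is what makes the fusion check painless.
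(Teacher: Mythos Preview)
Your argument is correct and follows essentially the same outline as the paper. The one tactical difference worth noting is how the $\half$-eigenvector $x$ is handled: the paper first writes $x$ in the form $a+(1-2\ga_e)e+(\ga_e-1)\one$ and uses the identity $ae=\tfrac12(\ga_e-1)\one+\tfrac12 a+\tfrac12 e$ to get $ex=\tfrac12 x$ and $x^2=\ga_e(1-\ga_e)\one$ with almost no computation, only afterwards expanding $x$ into the $\{a,\one,b_\gl\}$ basis; you instead work in that basis throughout and recover the fusion check via the pleasant observation that $(2a-\one)^2=\one$ and $(2a-\one)b_\gl=0$. Both routes are short; the paper's form makes the $ex=\tfrac12 x$ verification essentially a one-liner, while your rewriting makes the $x^2$ computation trivial.
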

\begin{proof}
By Lemma \ref{2ndaxis}(ii) (equation \eqref{5.8.1}), $\gb_e=1-\ga_e.$  But then $\ga_e-\ga_e^2=\gb_e-\gb_e^2,$ so
by equation \eqref{5.8.2}, equation \eqref{rho} holds.

Note that since $b$ is an idempotent,
$b=\ga_ba+(1-\ga_b)(\one-a)+b_{\gl},$ and $b_{\gl}^2=\nu_b\one,$ by  equation \eqref{rho}, because $\gr_b=1.$
Hence, if $\ga_b=0,$ then $\nu_b=0,$ so by \eqref{rho}, $\ga_e\in\{0,1\},$ and there are no restrictions on $\gr_e,$
while if $\ga_b\ne 0,$ then $\nu_b\ne 0,$ and the restriction on $\gr_e$ comes from  \eqref{rho}.

Next we show that $e$ is a primitive axis in $A.$  We claim that
\begin{equation}\label{ae}
\textstyle{ae =\half(\ga_e-1)\one+\half a+\half e.}
\end{equation}
Indeed
\begin{align*}
&\textstyle{\half}\textstyle{(\ga_e-1)\one+\half a+\half e}\\
=&\textstyle{\half(\ga_e-1)\one+\half a+\half\big((2\ga_e-1)a+(1-\ga_e)\one+\gr_e b_{\gl}\big)}\\
=&\textstyle{\ga_e a+\half\gr_e b_{\gl}=ae.}
\end{align*}
Let
\[
x:= a+(1-2\ga_e)e+(\ga_e-1)\one,
\]
Note that $x=0$ precisely when $e\in\{a,\one-a\}.$
Using equation \eqref{ae} we have
\begin{align*}
ex &\textstyle{=ae+(1-2\ga_e)e+(\ga_e-1)e=\half(\ga_e-1)\one+\half a+\half e -\ga_e e}\\
& =\textstyle{\half\big((1-2\ga_e)e+a+(\ga_e-1)\one\big) =\half x.}
\end{align*}
Thus $A_1(e)=\ff e,$ $A_0(e)=\ff (\one-e),$ and $A_{\half}(e)=\ff x.$

We check the fusion rules.
\begin{gather*}
x^2=\big(a+(1-2\ga_e)e+(\ga_e-1)\one\big)^2\\
=a+(1-2\ga_e)^2e+(\ga_e-1)^2\one+2(1-\ga_e)ae+2(\ga_e-1)a+2(1-2\ga_e)(\ga_e-1)e\\
=(2\ga_e-1) a-(1-\ga_e)e+(\ga_e-1)^2\one+2(1-\ga_e)ae\\
=\textstyle{(2\ga_e-1) a+(2\ga_e-1)e+(\ga_e-1)^2\one+2(1-2\ga_e)\big(\half(\ga_e-1)\one+\half a+\half e)\big)}\\
=\ga_e(1-\ga_e)\one.
\end{gather*}
Clearly $(\one-e)x=\half x,$ so the fusion rules are satisfied and $e$ is a primitive axis in $A.$

Finally we compute that
\begin{align*}
x&=a+(1-2\ga_e)\Big((2\ga_e-1)a+(1-\ga_e)\one+\gr_eb_{\gl}\Big)+(\ga_e-1)\one\\
&=\big(1-(2\ga_e-1)^2\big)a-2\ga_e(1-\ga_e)\one+(1-2\ga_e)\gr_eb_{\gl}\\
&=4\ga_e(1-\ga_e)a-2\ga_e(1-\ga_e)\one+(1-2\ga_e)\gr_eb_{\gl}.\qedhere
\end{align*}
\end{proof}

\begin{prop}\label{phalf}$ $
\begin{enumerate}
\item
Let $e:=\ga_e a+(1-\ga_e)(\one-a)+\gr_e b_{\gl}$ be a primitive axis
in $A,$ as in~Theorem \ref{half}(i), and let $c:=a^{\gt_e},$ and set
$\mu:=2\ga_e-1.$ Then
\[
a^{\gt_e}=c=\ga_c a+(1-\ga_c)(\one-a)+2\mu\gr _e b_\gl,\quad \ga_c=\mu^2.
\]
In particular, $c_\gl = 2(2\ga_e-1)\gr _e b_\gl.$

\item
If $\ga_b\ne 0,$ then there exists a primitive axis $e\in A,$ with $a^{\gt_e}=b,$
iff $\ff$ contains a square root of $\ga_b.$

\item
If $\ga_b=0,$ let $E_d=\{a+\gr e\mid \gr\in\ff\},$ for $d\in\{a,\one-a\}.$
Then
\[
E_d=\{e_1^{\gt_e}\mid \text{$e$ a primitive idempotent in A}\},
\]
for any $e_1\in E_d.$
\end{enumerate}
\end{prop}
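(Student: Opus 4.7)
The plan for (1) is to compute $a^{\gt_e}$ by decomposing $a$ into its $\gt_e$-eigenspace components and negating the $\half$-part. Since $a$ and $e$ are both primitive axes of the common type $\half$, Proposition \ref{sym}(iii) gives $\gvp_e(a)=\gvp_a(e)=\ga_e$, so the $A_1(e)$-projection of $a$ is $\ga_e e$. The $A_{\half}(e)$-component $a_{\half}$ then satisfies $ea_{\half}=\half a_{\half}$, so it can be read off from equation \eqref{ae} (namely $ae=\half(\ga_e-1)\one+\half a+\half e$) after subtracting $\ga_e e$; a short calculation identifies the result with the explicit element $x$ introduced in Theorem \ref{half}(ii). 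Consequently $a^{\gt_e}=a-2x$, and expanding $x$ while setting $\mu=2\ga_e-1$ rearranges this into $(2\mu^2-1)a+(1-\mu^2)\one+2\mu\gr_e b_{\gl}=\mu^2 a+(1-\mu^2)(\one-a)+2\mu\gr_e b_{\gl}$, which is the stated formula with $\ga_c=\mu^2$.

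For (2), equating $a^{\gt_e}=b$ and comparing the $a$-coefficient with the decomposition $b=\ga_b a+(1-\ga_b)(\one-a)+b_{\gl}$ forces $\mu^2=\ga_b$, while the $b_{\gl}$-coefficient forces $2\mu\gr_e=1$; this proves the necessity of $\ga_b$ being a square in $\ff$. For sufficiency, given $\mu\ne 0$ with $\mu^2=\ga_b$, I would set $\ga_e=(\mu+1)/2$ and $\gr_e=1/(2\mu)$, and the remaining task is to verify that the resulting $e$ is indeed a primitive axis, i.e., that the idempotent condition $\gr_e^2\nu_b=\ga_e(1-\ga_e)$ from Theorem \ref{half}(i)(2) holds. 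Both sides evaluate to $(1-\ga_b)/4$, so the condition is automatic, and $a^{\gt_e}=b$ by construction.

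For (3), when $\ga_b=0$ we have $\nu_b=0$ and $b_{\gl}^2=0$ by Theorem \ref{half}(ii), and Theorem \ref{half}(i)(1) identifies the primitive axes of $A$ as precisely $E_a\cup E_{\one-a}$, where $E_d$ is the parameterized family at the component $d$. The key observation is that because $b_{\gl}^2=0$, every primitive axis $e=d'+\gr_e b_{\gl}$ satisfies $eb_{\gl}=\half b_{\gl}$, so $b_{\gl}\in A_{\half}(e)$ and $b_{\gl}^{\gt_e}=-b_{\gl}$. For any $e_1=d+\gr_1 b_{\gl}\in E_d$ and any primitive axis $e$, part (1) (applied with $\mu=\pm 1$ according as $e\in E_a$ or $e\in E_{\one-a}$) together with the fact that $\gt_e$ fixes $\one$ gives $d^{\gt_e}=d\pm 2\gr_e b_{\gl}\in E_d$. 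Hence $e_1^{\gt_e}=d^{\gt_e}-\gr_1 b_{\gl}=d+(\pm 2\gr_e-\gr_1)b_{\gl}\in E_d$, and the parameter $\pm 2\gr_e-\gr_1$ ranges over all of $\ff$ as $\gr_e$ varies, establishing the claimed equality.

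The main obstacle is the bookkeeping in (1): correctly identifying the $\half$-eigenspace component of $a$ with the element $x$ of Theorem \ref{half}(ii), and then repackaging $a-2x$ into the form $\ga_c a+(1-\ga_c)(\one-a)+c_{\gl}$ with the clean substitution $\ga_c=\mu^2$. Once (1) is established, (2) reduces to solving two linear/quadratic constraints and checking one identity, and (3) reduces to combining (1) with the nilpotency $b_{\gl}^2=0$.
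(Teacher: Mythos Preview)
Your proposal is correct and follows essentially the same approach as the paper: in (1) you decompose $a$ with respect to $e$ using $\gvp_e(a)=\gvp_a(e)=\ga_e$ and compute $a^{\gt_e}=a-2a_{\half}$, which is exactly the paper's computation $a^{\gt_e}=2(2\ga_e-1)e+2(1-\ga_e)\one-a$ rewritten; parts (2) and (3) also match, with your treatment of (3) being slightly more explicit than the paper's terse reduction to $e_1=a$ (you compute $e_1^{\gt_e}$ directly via $b_{\gl}^{\gt_e}=-b_{\gl}$, whereas the paper simply invokes the translation $e_2=e_1+\gr b_{\gl}$ to justify the reduction).
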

\begin{proof}
(1)
Using Theorem \ref{half}(i) with $e$ in place of $a$
and $a$ in place of $e,$ decompose $a$ with resprect to $e$ as
\[
a=\gvp_e(a) e+(1-\gvp_e(a))(\one-e)+a_{\gl}.
\]
Noticing that $\gvp_e(a)=\gvp_a(e)$ we have
\begin{align*}
a^{\gt_e}&=\ga_e e+(1-\ga_e)(\one-e)-\big(a-\ga_e e-(1-\ga_e)(\one-e)\big)\\
&=2(2\ga_e-1) e + 2(1-\ga_e)\one -a.
\end{align*}
Write $e=w_e+e_{\gl},$ with $w_e=\ga_ea+(1-\ga_e)(\one-a)=2(\ga_e-1)a+(1-\ga_e)\one,$ and $e_{\gl}=\gr_eb_{\gl}.$ Then
\begin{equation*}
\begin{aligned}
a^{\gt_e} & = 2(2\ga_e-1)( w_e  + e_\gl) + 2(1-\ga_e)\one -a\\
&= 2(2\ga_e-1) w_e   + 2(1-\ga_e)\one -a + 2(2\ga_e-1)e_\gl\\
&=2(2\ga_e-1)\big(2\ga_e-1)a+(1-\ga_e)\one\big) + 2(1-\ga_e)\one -a + 2(2\ga_e-1)e_\gl\\
&=(2(2\ga_e-1)^2-1)a+4(1-\ga_e)\ga_e\one+2(2\ga_e-1)e_\gl\\
&=(2\ga_e-1)^2a+(1-(2\ga_e-1)^2)(\one-a)+2(2\ga_e-1)e_\gl.
\end{aligned}
\end{equation*}
Since $e_\gl = \gr_e b_\gl,$ (1) holds.
\medskip

(2)
Suppose that $\ga_b\ne 0$ and that $\ff$ contains a square root
of $\ga_b,$ and let $\ga_e\in\ff$ with $(2\ga_e-1)^2=\ga_b.$  Then
\[
4\ga_e^2-4\ga_e+1=\ga_b,
\]
so $\ga_e-\ga_e^2=\frac{1-\ga_b}{4}.$  But then the equation
$\gr_e^2\nu_b=\ga_e(1-\ga_e)$ has a solution in $\ff,$ that is
$\gr_e=\frac{1}{2(2\ga_e-1)}.$  Hence by Theorem \ref{half},
$$e:=\ga_e a+(1-\ga_e)(\one-a)+\gr_e b_{\gl}$$ is a primitive axis in
$A,$ and $a^{\gt_e}=b,$ by (1).

The converse is easy to check using (1).
\medskip

(3)
Reversing the role of $a$ and $\one-a,$ we may assume that $d=a.$
Since $e_2=e_1+\gr b_{\gl},\ \gr\in\ff,$ for any $e_1, e_2\in E_a,$
we may assume that $e_1=a.$
By Theorem \ref{half}(i1), $\ga_e\in\{0,1\},$ for all primitive axes  $e\in A.$
Hence, in (1), $\mu^2=1,$ so  $a^{\gt_e}=a+2\gr_eb_{\gl}.$
\end{proof}

This concludes subsection \ref{sub 2-gen}.  We conclude \S 5
with the following theorem.

\begin{thm}\label{axes}
Let $A=\lan\lan a,b\ran\ran$ be a $2$-generated PAJ.  Then any
idempotent in $A$ distinct from $0$ and $\one$ (if $\one$ exists) is
a primitive axis in $A.$
\end{thm}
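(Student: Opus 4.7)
The plan is to observe that this theorem is essentially a summary of the detailed idempotent classifications carried out in Sections 4 and 5, so its proof reduces to a case analysis by whether $A$ is commutative and, in the commutative case, by $\dim A$.

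First I would dispose of the noncommutative case. By Remark \ref{ugen}, $A$ is either $\uu_2(\gl)$ or the exceptional algebra $A_{\operatorname{exc},3}(\{a,b\},\gl)$ of Example \ref{try1}. In the first case Lemma \ref{p1} directly yields that every nonzero idempotent is a primitive axis. In the exceptional case Example \ref{try1} lists all idempotents explicitly: the axes $a+\mu y$ of type $(\gl,\gd)$ and $b+\mu y$ of type $(\gd,\gl)$, together with the idempotent $a+b-y$, which is shown there to be the multiplicative unit $\one$ and is thus excluded by hypothesis.

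Next I would handle the commutative case by dimension. When $\dim A = 1$ the claim is vacuous. When $\dim A = 2$, if $ab=0$ then $A=\ff a\oplus\ff b$ with $\one = a+b$, and a direct computation shows the only non-unit nonzero idempotents are the primitive axes $a$ and $b$. If $ab\ne 0$, Lemma \ref{dim2} applies: $\gl\in\{\half,-1\}$, all nonzero idempotents are enumerated in parts (2) and (3) of that lemma and shown to be primitive axes of the same type as $a$, and no unit element exists. When $\dim A = 3$, I would split on whether the scalar $\gc$ of Proposition \ref{c+} vanishes. If $\gc=0$ then Lemma \ref{gc=0} provides the complete list of idempotents, all primitive axes, and by Remark \ref{annaxis} there is no unit to exclude. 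If $\gc\ne 0$ then $\one=\gs/\gc$ is the unit, and I would invoke Theorem \ref{nothalf} (for $\gl\ne\half$) or Theorem \ref{half} (for $\gl=\half$), both of which enumerate all non-unit idempotents and explicitly certify each as a primitive axis.

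The step carrying the real content is the 3-dimensional commutative case with $\gc\ne 0$ and $\gl\ne\half$: one must argue, via the quadratic equations \eqref{5.8.1} and \eqref{5.8.2} of Lemma \ref{2ndaxis}, that at most two values of $\ga_e$ yield idempotents, so the list $\{a,\one-a,b,\one-b,b^{\gt_a},\one-b^{\gt_a}\}$ is exhaustive; but this analysis has already been executed inside the proof of Theorem \ref{nothalf}. Consequently the present theorem is a clean statement assembling the prior classifications, and the only thing to verify in the proof itself is that the case split is complete --- in particular that the commutative/noncommutative dichotomy together with $\dim A\in\{1,2,3\}$ covers all 2-generated PAJs, which follows from \cite[Theorem 2.16]{RoSe3}.
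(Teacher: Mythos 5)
Your proposal is correct and follows essentially the same route as the paper: a case analysis that delegates to the prior classifications (Lemma \ref{p1} and Example \ref{try1} for the noncommutative algebras, Lemma \ref{dim2} in dimension $2$, and Lemma \ref{gc=0} together with Theorems \ref{nothalf} and \ref{half} in dimension $3$ according to whether $\gc$ vanishes). The only difference is organizational --- you split by commutativity and dimension while the paper lists the five explicit isomorphism types --- and you additionally note the trivial cases $\dim A\le 2$ with $ab=0$, which the paper leaves implicit.
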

\begin{proof}
We take $A$ case by case.
\medskip

(1) $A=U_{\{a,b\}}(\gl).$  Then $ab=\gd a+\gl b,$ and ${ba=\gd b+\gl a.}$
By \cite[Example 2.6(1)]{RoSe1}, the idempotents in $A$ are $e_{\ga}:=\ga a+(1-\ga) b,$ $\ga\in\ff.$
Then $A_{\gl,\gd}(e_{\ga})=a-b,$ and $(a-b)^2=0,$ so $e_{\ga}$ is a primitive axis.
\medskip

(2) $A=U'_{\{a,b\}}(-1),$ with $-1\ne\half,$ as in Example \ref{Uprime}, that is $ab=-a-b.$
By \cite[Lemma 3.1.8(5)]{HSS0} the idempotents in $A$ are $a, b, -a-b$ and they are all primitive axes.
\medskip

(3) $A=A_{\operatorname{exc},3}(\{a,b\},\gl)$ as in Example
\ref{try1}. Then, by \cite[Example 2.6(2)]{RoSe1}, the non-trivial idempotents
in $A$ are $e_{\ga}=a+\ga y$ and $b+\ga y,$ $\ga\in\ff.$ Note that
$A_{\gl,\gd}(e_{\ga})=\ff y,$ for all $\ga.$  If $\ga =-1,$ then
$A_0(e_{-1})=b,$ while if $\ga\ne -1,$ then $A_0(e_{\ga})=b-z,$ with
$z=(1+\ga y).$ Clearly the fusion rules are satisfied, so $e_{\ga}$
is a primitive axis.  Similarly, $b+\ga y$ is a primitive axis in
$A.$

(4) In the notation of \cite{HRS},
$A=B(\half,1),$ or $A=B(-1,-\half),$ so $\dim(A)=3,$ and $\gc=0.$
By Lemma \ref{gc=0}, the theorem holds.
\medskip

(5)
$A=B(\gl,\gvp),$ with $\dim(A)=3,$ and $\gc\ne 0.$
By Theorems \ref{nothalf} and \ref{half}, the theorem holds.
\end{proof}

\section{The existence of a Frobenius form}\label{Frob}

In this section  $A$ is a  PAJ  generated by a set~$X$   of
primitive axes. We refer the reader to Definition \ref{hom} for the
terminology below. Let $Y$ be a connected component of the axial
graph on $\chi(A),$  then one of the following holds: \medskip

 (Case I)\phantom{II}\  $Y$ is uniform of type $\{\gl, \gd\}$ with $\gl\ne\half,$
and $\gl+\gd=1.$ This follows from Remark~\ref{ugen}.

(Case II)\phantom{I}\  $Y$ is uniform of type $\gl.$  This is always
true if there exists an axis in~$Y$ of type $\half.$ See Lemma
\ref{gr11}(iii).

(Case III)\  There exists $\half\ne\gl\in\ff$ such that
$Y=Y_{\gl}\cup Y_{1-\gl},$ and $Y_{\nu}\ne\emptyset,$ for
$\nu\in\{\gl,1-\gl\}.$  See Lemma \ref{gr11}(iii).

\begin{remark}\label{gen}
The proof of \cite[Proposition 2.12(vi)]{RoSe1} shows more than its
statement.  Namely, it shows that if $B:=\lan\lan a,b\ran\ran$ is a
CPAJ generated by two primitive axes $a$ of type $\gl$ and  $b$ of
type $\gl',$ and $\gc\ne 0$ (where $\gc$ is as in~\S 5), so
$\dim(B)=3,$ then either $\gl=\gl',$ or $B=\lan\lan a,
a^{\gt_b}\ran\ran=\lan\lan b,b^{\gt_a}\ran\ran,$ unless $\gl'=-1,
\gl=2,$ and then  $B=\lan\lan a, a^{\gt_b}\ran\ran.$ (Note that in
the proof of part (vi) of \cite[Proposition 2.12]{RoSe1}, the
displayed equalities in the proof of that part should start with
$\gl'a_{\gl'}$ and not with $\half a_{\gl'}.$)
\end{remark}

\begin{prop}\label{miya27}
Let $Y$ be a connected component of $\chi(A)$ as in Case III.
If $\gl\notin \{-1,2\},$ then $\lan\lan Y\ran\ran$
is spanned by $Y_{\nu},$
for either $\nu\in\{\gl, 1-\gl\},$ and $Y_{\nu}$ is connected and closed. If $\gl\in \{-1,2\},$ then $\lan\lan Y\ran\ran$
is spanned by $Y_2,$ and $Y_2$ is connected and closed.
\end{prop}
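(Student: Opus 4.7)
I would prove the proposition in three steps: first, show $\lan\lan Y\ran\ran=\lan\lan Y_\nu\ran\ran$ as subalgebras of $A$; second, deduce that $Y_\nu$ is connected; third, deduce closure and the vector-space spanning. For the last step, once $Y_\nu$ is closed one has $\Cl(Y_\nu)=Y_\nu$, so Theorem~\ref{KMt}(i) applied to $\lan\lan Y_\nu\ran\ran$ immediately gives the spanning assertion. The symmetric form of Remark~\ref{gen} available when $\gl\notin\{-1,2\}$ lets us take either $\nu\in\{\gl,1-\gl\}$; in the exceptional case only $\nu=2$ survives the asymmetry.

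For the first step with $\nu=\gl$ and $\gl\notin\{-1,2\}$, set $V:=\lan\lan Y_\gl\ran\ran$ and prove $b\in V$ for every $b\in Y_{1-\gl}$ by induction on the distance $d(b)$ from $b$ to $Y_\gl$ in the axial graph restricted to $Y$. In the base case $d(b)=1$, fix a neighbor $a\in Y_\gl$; the subalgebra $B:=\lan\lan a,b\ran\ran$ is $3$-dimensional (else Lemma~\ref{dim2} would force $a,b$ to have the same type) with $\gc_B\ne 0$ (else Lemma~\ref{gc=0}(1) would force the same type), so Remark~\ref{gen} yields $B=\lan\lan a,a^{\gt_b}\ran\ran$. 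Since $a^{\gt_b}$ has type $\gl$ (Miyamoto being an automorphism) and lies in $Y$, it lies in $Y_\gl$, so $b\in B\subseteq V$. In the inductive step $d(b)=k\ge 2$, pick a neighbor $b_1$ of $b$ with $d(b_1)=k-1$; necessarily $b_1\in Y_{1-\gl}$, and $b_1\in V$ by induction. The subalgebra $C:=\lan\lan b,b_1\ran\ran$ is $3$-dimensional with $\gc_C\ne 0$ (the degenerate alternatives would force $1-\gl\in\{-1,\half\}$, i.e., $\gl\in\{2,\half\}$, all excluded by our hypotheses), so $C$ carries an identity $\one$. The elements $\one-b$ and $\one-b_1$ are primitive axes of type $\gl$, each adjacent in the axial graph to the complementary generator (e.g., $b_1(\one-b)=b_1-b_1 b\ne 0$, as otherwise $b_1\in\ff b$), so both lie in $Y_\gl\subseteq V$. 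Then $\one=(\one-b_1)+b_1\in V$ and $b=\one-(\one-b)\in V$.

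For connectedness of $Y_\gl$, suppose it splits into distinct components $\{Y_\gl^{(i)}\}$. By Theorem~\ref{miya1} applied to $V$, $V=\sum_i\lan\lan Y_\gl^{(i)}\ran\ran$ with vanishing pairwise products. For each $b\in Y_{1-\gl}$ decompose $b=\sum_i b^{(i)}$ accordingly; then $b\cdot b^{(i)}=(b^{(i)})^2=b^{(i)}$ is an idempotent in $A_1(b)=\ff b$, forcing $b^{(i)}\in\{0,b\}$. Hence $b$ lies in a unique summand, and we obtain a well-defined map $f\colon Y\to I$ (its value at $a\in Y_\gl$ being the index of the component containing $a$). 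If $c,c'\in Y$ are adjacent, $cc'\ne 0$ forces $f(c)=f(c')$, since otherwise $cc'$ would lie in the product of orthogonal summands and vanish. Thus $f$ is constant on the connected set $Y$, contradicting the hypothesized split. Closure of $Y_\gl$ is then automatic: for $a,b\in Y_\gl$, $b^{\gt_a}$ is a primitive axis of type $\gl$ which, by Lemma~\ref{gr11}(i), remains in the connected component~$Y$ and hence in $Y_\gl$.

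The main obstacle is the case $\gl\in\{-1,2\}$, where only $\nu=2$ is available. The base case of the induction is unchanged, but the inductive step confronts 2-generated subalgebras of two type-$(-1)$ axes, which may be $\uu'_2(-1)$ (dimension~$2$) or $B(-1,\gvp)$ with $\gc=0$ (dimension~$3$), neither containing an identity or a type-$2$ axis. The workaround is to propagate through $w:=b^{\gt_{b_1}}$, computed as $-b-b_1$ in the 2-dim case and $-b-b_1+2\gs$ in the 3-dim degenerate case. In the 2-dim case one checks directly that $aw=-ab_1\ne 0$ for $a\in Y_2$ the neighbor of $b_1$ (using $ab=0$ since $d(b)\ge 2$), so $d(w)=1$ and $w\in V$ by the base case, and then $b=-b_1-w\in V$. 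In the 3-dim $\gc=0$ case one uses the computed identity $b_1 w=b-\gs$ together with $w+b_1=-b+2\gs$ to recover $\gs\in V$ (once $w\in V$ is established), and hence $b\in V$. The connectedness and closure arguments of the previous paragraph apply with $\nu=2$ unchanged.
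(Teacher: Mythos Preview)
There is a genuine gap in your inductive step for $\gl\notin\{-1,2\}$. When $d(b)\ge 2$ you take $b,b_1\in Y_{1-\gl}$ and form $C=\lan\lan b,b_1\ran\ran$ with identity $\one=\one_C$, then assert that $\one-b$ and $\one-b_1$ lie in $Y_\gl$. But $\one$ is only the identity of the subalgebra~$C$; nothing guarantees that $\one-b$ is a primitive axis in the ambient algebra~$A$ (the eigenvalues of $L_{\one-b}$ on $A\setminus C$ are uncontrolled), so you cannot conclude $\one-b\in\chi(A)$, let alone $\one-b\in Y_\gl$. The only idempotents you know to be primitive axes in $A$ are Miyamoto images of elements of $\chi(A)$, and $\one-b$ is not obtained that way.

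The paper sidesteps this entirely by a path-rewriting argument that never confronts two adjacent wrong-type axes. Starting from a fixed $a\in Y_\nu$ and a path $a=y_1,\dots,y_m=y$ in $Y$, it replaces each wrong-type $y_i$ by a type-$\nu$ axis $z_i$ obtained as a Miyamoto image (namely from $\Cl(z_{i-1},z_{i-1}^{\gt_{y_i}})$, using Remark~\ref{gen} applied to the mixed-type pair $z_{i-1},y_i$). Since Miyamoto images of elements of $\chi(A)$ are automatically in $\chi(A)$, the replacement axes are genuine elements of $Y_\nu$. This single path-rewriting move simultaneously yields connectedness of $Y_\nu$ (when $y$ has type $\nu$) and spanning (when $y$ has type $1-\nu$), and it works uniformly for $\gl\in\{-1,2\}$ as well, since Remark~\ref{gen} still applies to the pair $(z_{i-1},y_i)$ with $z_{i-1}$ of type~$2$.

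Your treatment of the exceptional case $\gl\in\{-1,2\}$ has a separate gap: the claim $d(w)=1$ rests on $b_1$ having a neighbour $a\in Y_2$, which is only available when $d(b_1)=1$, i.e., $k=2$. For $k\ge 3$ no such $a$ exists, and you have not explained how to bound $d(w)$ or otherwise place $w$ in $V$.
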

\begin{proof}
Clearly $Y_{\nu}$ is closed. Let $a\in Y$ be of type $\nu$ if
$\gl\notin\{-1,2\}$ and of type~$\nu=2$ otherwise. Let $y\in Y,$ and
let $a=y_1, \dots, y_m = y$ be a path connecting $a$ to $y$ in  $Y.$
We claim that there is a path $a=z_1,\dots z_{m-1}, y_m=y$ in $Y,$
such that $z_i$ is of type $\nu,$ for $i\in\{1,\dots,m-1\}.$

If $y_2$ is not of type $\nu,$ then, by Lemma \ref{dim2},
$\dim \lan\lan y_1,y_2\ran\ran =3,$ and by Remark \ref{gen},
$\lan\lan y_1,y_2\ran\ran=\lan\lan y_1, y_1^{\gt_{y_2}}\ran\ran.$
But now, by Theorem \ref{KMt}(i), $\lan\lan y_1, y_2\ran\ran,$
is spanned by axes of type $\nu,$  namely the axes $\Cl(y_1,y_1^{\gt_{y_2}})\subseteq Y.$ Note
that $y_3$ is connected to some axis
say $z_2\in \Cl(y_1,y_1^{\gt_{y_2}}),$ otherwise $y_2y_3=0.$  By induction on $m,$
the claim holds.

If $y$ is of type $\nu,$ then we found a path from $a$ to $y$ in $Y_{\nu}.$  Thus $Y_{\nu}$
is connected.  If $y$ is of type $1-\nu,$  then $\lan\lan z_{m-1}, y\ran\ran$ is spanned
by $\Cl\{z_{m-1}, z_{m-1}^{\gt_y}\},$ and we see that $y$ is in the span of $Y_{\nu}.$
\end{proof}

Let $\{X_i\mid i\in I\}$ be the set of connected components of $\chi(A).$
If $X_i$ is as in Case I or Case II, let $X'_i=X_i.$
If $X_i$ is as in case III, and $\gl\in\{-1,2\}$ let $X'_i=(X_i)_2.$
Finally if  $X_i$ is as in case III, and $\gl\notin\{-1,2\},$
choose $\gl_i\in \{\gl,1-\gl\}$ and let $X'_i=(X_i)_{\gl_i}.$
Let $X': = \bigcup X_i'\subseteq X$.

Of course, using Proposition \ref{miya27}, we aee that $X'$ spans $A,$ and $X'=\Cl(X').$
Let $\calb = \cup \calb_i$ be a basis of $A$ with
 $\calb_i \subseteq X_i'.$

 Recall that a non-zero bilinear form
$(\cdot\, ,\, \cdot)$ on an algebra $A$ is called {\bf Frobenius} if
the form associates with the algebra product, that is,
\[(x,yz)=(xy,z)\]
for all $x,y,z\in A$.

For PAJs we specialize the concept of Frobenius form further by
asking that the ``normalized'' condition $(a,a)=1$ be satisfied for
each axis $a\in X'.$

 The purpose of this section is to prove
the following theorem (extending \cite[Theorem 4.1, p.~407]{HSS}):

\begin{thm}\label{thm frob}
The algebra $A$ admits a normalized Frobenius form, which is unique
up to choice of $\gl_i.$
The form $(\ ,\ )$ satisfies  $( a,u) = \gvp_a(u)$ for all
$a\in X'$ and $u\in A.$
\end{thm}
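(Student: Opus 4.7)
The plan is to reduce to each connected component of $\chi(A)$ and then handle the three cases (I, II, III) separately. By Theorem \ref{dec2}, $A$ decomposes as a direct product $\prod_j A_j$ with $A_j=\lan\lan X_j\ran\ran$; since $A_jA_k=0$ for $j\ne k$ (Theorem \ref{miya1}(ii)), it suffices to construct a normalized Frobenius form on each $A_j$ and take the orthogonal direct sum.

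For a commutative component (Cases II and III), I first reduce to uniform type. In Case III, Proposition \ref{miya27} shows that $A_j$ is spanned by the axes of a single chosen type $\gl_j\in\{\gl,1-\gl\}$ (or of type $2$ when $\gl\in\{-1,2\}$), which form a connected, closed set, placing us in Case II. Then \cite[Theorem 4.1, p.~407]{HSS} supplies the required normalized Frobenius form. The binary choice of $\gl_j$ in Case III is precisely the ``up to choice of $\gl_j$'' ambiguity in the statement.

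For a noncommutative component (Case I), $A_j$ has type $\{\gl,\gd\}$ with $\gl+\gd=1$ and $\gl\ne\gd$. By Theorem \ref{almostcomm}, $A_j=A'+A''+Z$ with $Z^2=0$ and $A_j/Z\cong \overline{A'}\times\overline{A''}$, and by Theorem \ref{U1}, $\overline{A'}$ is a direct product of copies of $\uu_{E}(\gl)$ while $\overline{A''}$ is a direct product of copies of $\uu_{F}(\gd)$. On each factor $\uu_{E}(\gl)$ I define the form by $(e_i,e_j):=1$ for all $e_i,e_j\in E$; the direct computation
\[
(e_i,e_je_k)=\gd(e_i,e_j)+\gl(e_i,e_k)=\gd+\gl=(e_ie_j,e_k)
\]
confirms it is Frobenius, with normalization $(e,e)=1$. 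Assemble these into a Frobenius form on $A_j/Z$ by orthogonal direct sum over all $\uu$-factors, and define $(x,y):=(\pi x,\pi y)_{A_j/Z}$ on $A_j$ via the quotient map $\pi\colon A_j\to A_j/Z$; this declares $Z$ and the $A'$--$A''$ cross-pairing to be in the radical. Normalization $(a,a)=1$ for $a\in X_j$ is then immediate. For the identity $(a,u)=\varphi_a(u)$, decompose $u=u'+u''+z$: both sides vanish on $u''$ (since axes of the opposite type satisfy $\varphi_a=0$ by Example \ref{try1}, and $\bar a$ is orthogonal to $\overline{A''}$ by the direct sum) and on $z$ (since $Z\subseteq A_0(a)+A_{\gl,\gd}(a)$ by Theorem \ref{almostcomm}(i)); on $u'$ the equality reduces to $(a,u')_{\uu_E(\gl)}=\varphi_a(u')$, which holds by Lemma \ref{p1}.

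The main obstacle is the Case I bookkeeping: ensuring that the interactions $A'A''\subseteq Z$, $ZA\subseteq Z$, and $Z^2=0$ remain compatible with the associative Frobenius condition. This is handled cleanly by pushing the verification through the quotient, since $\pi$ is an algebra homomorphism and associativity of the pulled-back form is inherited factorwise from the $\uu_E(\gl)$- and $\uu_F(\gd)$-components. Uniqueness, given the normalization and prescribed values $(a,u)=\varphi_a(u)$ for $a\in X'$, follows because Miyamoto-invariance of $\varphi$ (Lemma \ref{abovename}) extends the prescription to all of $\Cl(X)$, which spans $A$ by Theorem \ref{KMt}(i).
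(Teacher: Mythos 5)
Your overall strategy is genuinely different from the paper's: you build the form piecewise from the structure theory (quoting the commutative existence result that this theorem is explicitly extending, and writing down the all-ones form on each $\uu_E(\gl)$-factor pulled back through $A_j\to A_j/Z$), whereas the paper gives one unified construction, setting $(a,b):=\gvp_a(b)$ on a basis $\calb\subseteq X'$ of axes and then proving symmetry (via Proposition~\ref{sym} and the choice of $\gl_i$), orthogonality of eigenspaces, and associativity by reducing to eigenvectors of an axis. Your Case~I computation is correct ($(e_i,e_je_k)=\gd+\gl=(e_ie_j,e_k)$, the pullback of a Frobenius form along the algebra homomorphism $\pi$ is Frobenius, and the orthogonal sum over direct factors is Frobenius), and your uniqueness argument is essentially the right one: for any Frobenius form, $(a,u_0)=(a,au_0)=0$ and $(a,u_{\gl,\gd})=\gl(a,u_{\gl,\gd})$ force $(a,u)=\gvp_a(u)(a,a)$, so normalization on the spanning set $X'$ pins the form down.

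The genuine gap is in your very first reduction. Theorem~\ref{dec2} only splits off the \emph{noncommutative} components as direct factors; the commutative components $A_i=\lan\lan X_i\ran\ran$ need not form a direct product --- by Example~\ref{non}(ii) two disjoint components can generate subalgebras with a nontrivial (annihilating) intersection, and by Theorem~\ref{miya1} the sum $\sum A_i$ is in general not direct. So ``take the orthogonal direct sum'' of forms built separately on each $A_i$ is not yet a definition of a bilinear form on $A$: if $u=\sum u_i=\sum u_i'$ are two decompositions, you must check that $\sum f_i(u_i,v_i)$ is independent of the choice. This is repairable --- the differences $n_i=u_i-u_i'$ are annihilating by Theorem~\ref{miya1}(iii), and any Frobenius form on a PAJ kills annihilating elements since $(a,y)=(a^2,y)=(a,ay)=0$ for every axis $a$ and axes span --- but that argument has to be made; the paper sidesteps the issue entirely by defining the form on a single basis of $A$. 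Two smaller points to tighten: the citation of \cite[Theorem 4.1]{HSS} must be checked to cover uniform commutative type $\gl$ for arbitrary $\gl$ (not just $\half$) together with the normalization $(a,a)=1$ on the generating axes; and $\widebar{A'}=A'/(A'\cap Z)$ is a \emph{quotient} of the direct product of $\uu_{E_i}(\gl)$'s given by Theorem~\ref{U1} (since $Z\subseteq\sum_j A'_{\gl,\gd}(a_j')$ by Theorem~\ref{almostcomm}(viii)), so you should note that such quotients are again of the form $\uu_{E'}(\gl)$ before applying your formula there.
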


\begin{proof}[Proof of Theorem \ref{thm frob}]\hfill
\medskip

We start by defining the bilinear form $(\cdot\, ,\, \cdot)$ on the
 base $\calb\subseteq X'.$ We let $(a,b)=\varphi_a(b),$  for all $a,b\in\mathcal{B}.$
Extending by linearity we get the bilinear form $(\cdot\, ,\,
\cdot)$. Note that by our choice of $\mathcal{B}$ and by
Proposition~\ref{sym}, $(\cdot\, ,\, \cdot)$ is symmetric.

\begin{lemma}
\begin{enumerate}
\item
$(a\, ,\, u)=\gvp_a(u),$ for all axes $a\in X'$ and all $u\in A;$

\item
$(a\, ,\, a)=1,$ for all axes $a\in X';$

\item
$(\cdot\, ,\, \cdot)$ is invariant under any automorphism of $A$
that preserves $X'.$ In particular
$(\cdot\, ,\, \cdot)$ is invariant under any automorphism in $\calg(X').$
\end{enumerate}
\end{lemma}
\begin{proof}
(1\&2): First suppose that $a\in\calb$. We know that (1) holds for
any $u$ in $\calb,$ by definition.  For $u =  \sum_{b\in\calb}\ga_b
b ,$ since $\gvp_a$ is linear,
\begin{gather*}
\gvp_a(u)=
\gvp_a\left(\sum_{b\in\calb}\ga_b b\right)=\sum_{b\in\calb}\ga_b\gvp_a(b)\\
=\sum_{b\in\calb}\ga_b(a,b) =(a,\sum_{b\in\calb}\ga_b b)=(a,u),
\end{gather*}
and (1) holds for $a\in \calb$, $u\in A.$

Now  suppose that $a\in X' \setminus \calb$.  Let $b \in \calb$.
Then $\gvp_a(b)=\gvp_b(a),$ by the choice of $X'.$ Further
by~Proposition~\ref{sym}, and $\gvp_b(a)=(b,a),$ as (1) holds
for~$b$.  Finally, since $(\cdot\, ,\, \cdot)$ is symmetric
$(b,a)=(a,b),$ so $\gvp_a(b)=(a,b),$ so, as above,
$(a,u)=\gvp_a(u),$ for all $u\in A,$ and (1) holds for any axis
$a\in X'$.

In particular, $(a,a)=1$ for every  axis $a\in X'$,  since
$\gvp_a(a)=1$. Thus (2) holds.
\medskip

\noindent (3):\quad Let $\psi\in\Aut(A)$ preserving $X',$ and $a\in
X'$ be a primitive axis of type $(\gl,\gd)$  or $\gl.$ If
\[
u=\gvp_a(u)a+u_0+u_{\gl,\gd}
\]
is the decomposition of $u\in A$
with respect to  $a,$ then  the decomposition of $u^\psi$ with
respect to the primitive axis $a^\psi\in X'$, also of type $(\gl,\gd)$,
is $u^\psi=\gvp_a(u)a^\psi+u_0^\psi+u_{\gl,\gd}^\psi$. Hence
$\gvp_{a^\psi}(u^\psi)=\gvp_a(u)$, and so $(a^\psi,u^\psi)=(a,u)$.
Finally, taking an arbitrary $v\in A$ and decomposing it with
respect to the base $\calb$ as $v=\sum_{b\in\calb}\ga_b b$, we get
that
\begin{equation}
\begin{aligned}
(v^\psi,u^\psi) & =\big(\sum_{b\in\calb}\ga_b b^\psi,u^\psi\big)=
\sum_{b\in\calb}\ga_b(b^\psi,u^\psi)\\ &=\sum_{b\in\calb}\ga_b(b,u)=
\big(\sum_{b\in\calb}\ga_b b,u\big)=(v,u).\qedhere
\end{aligned}
\end{equation}
\end{proof}

\begin{lemma}\label{O}
For every  axis $a\in X'$, the different eigenspaces of $a$ are
orthogonal with respect to $(\cdot\, ,\, \cdot)$.
\end{lemma}
\begin{proof}
We assume that $a$ is of type $(\gl,\gd).$  The
argument for $a$ of type $\gl$ is exactly the same.

Clearly, if $u\in A_0(a)+A_{\gl,\gd}(a),$ then $(a,u)=\gvp_a(u)=0$.
Hence $A_1(a)=\ff a$ is orthogonal to both $A_0(a)$ and
$A_{\gl,\gd}(a)$. It remains to show that $A_0(a)$ orthogonal to
$A_{\gl,\gd}(a)$. For $u\in A_0(a)$ and $v\in A_{\gl,\gd}(a)$, the
fact that $(\cdot\, ,\, \cdot)$ is invariant under $\tau_a$ gives us
$(u,v)=(u^{\tau_a},v^{\tau_a})=(u,-v)=-(u,v)$. Clearly, this means
that $(u,v)=0$.
\end{proof}

\begin{lemma}
Let $a\in X'$ be a primitive axis of type $(\gl,\gd),$ with $\gl\ne\gd,$
or of type $\gl.$ Let  $v\in A$
be a $\mu$-eigenvector of $a$ and $w$ a $\nu$-eigenvector of $a,$
with $\mu,\nu\in\{1,0,(\gl,\gd)\},$ (resp.~$\mu,\nu\in\{0,1,\gl\}$). Then
\begin{enumerate}
\item
If $\mu\ne\nu,$ then $0=(va,w)=(av,w)=(v,aw)=(v,wa)$.

\item
If $\mu=\nu\in\{0,1\},$ then $(va,w)=(av,w)=(v,aw)=(v,wa).$

\item
$(x,yz)=(yx,z),$ and $(xy,z)=(x,zy),$ for all $x,y,z\in A.$

\item
If $\gl\ne\gd,$ and $\mu=\nu=(\gl,\gd),$ then $(v,w)=0.$

\item
$(x,yz)=(xy,z),$ for all $x,y,z\in A.$
\end{enumerate}
\end{lemma}
\begin{proof}
(1)  $(va,w)=\gr(v,w),$ for some  $\gr\in\ff,$ so by Lemma \ref{O},
$(va,w)=0.$  Similarly $(av,w)=(v,aw)=(v,wa)=0.$
\medskip

\noindent (2)  This is clear.
\medskip

\noindent (3)
Consider the equality $(x,yz)=(yx,z).$ This is linear
in $x$, $y$, and $z$. In particular, since $A$ is spanned by $X',$
we may assume that $y:=a\in X'.$
Furthermore, since $A$ decomposes as the sum of the eigenspaces of
$a$, we may assume that $x:=v$ and $y:=w$ are eigenvectors of~$a.$
By (1) and (2) we  only consider the case where $v$ and $w$ are
$(\gl,\gd)$-eigenvectors (resp.~$\gl$-eigenvectors), but clearly the equality holds here. The
proof of the second equality in (3) is similar.
\medskip

\noindent (4)  By (3) we have
\[
\gl(v,w)=(v,aw)=(av,w)=(a,wv)=(wa,v)=(v,wa)=\gd(v,w).
\]
Since $\gl\ne\gd,$ $(v,w)=0.$
\medskip

\noindent (5)
Again we may assume that $y\in X'$ is an
axis and that $x,
z$ are eigenvectors of~$y.$ Then (5) follows from (1), (2) and (4).
\end{proof}
Part (5) concludes the proof of Theorem \ref{thm frob}.
\end{proof}

\begin{remark}\label{-1}
Notice that by our choice of $X'$ in Theorem \ref{thm frob},
for any primitive axis $a\in A,$ we can choose $X'$ so that
$a\in X',$ unless perhaps $a$ is of type $-1\ne\half$ and there
exist a primitive axis $b\in A$ of type $2,$ with $ab\ne 0.$
\end{remark}

The normalized Frobenius form of a noncommutative uniformly
generated PAJ is easy to compute.

\begin{examples}\label{Frobnoncom}
Suppose that $A$ is as in Theorem~\ref{almostcomm}, a uniformly generated PAJ of type $\{\gl,\gd\}$, where $\gl
+ \gd = 1$ and $\gl \ne \gd.$  Then for $a,b\in\chi(A),$
\[
(a,b) = \begin{cases}
1 &\text{if $a,b$ are axes of the same type in the same component};\\
0 &\text{otherwise.}
\end {cases}
\]
Indeed, note first that in this case $X'$ of Theorem \ref{thm frob} equals $\chi(A).$
If $a,b$ are of the same type, and in the same component,
then, by~Theorem \ref{U1}(1), $\lan\lan a,b\ran\ran\cong U_2(\gl),$
so $b=a+b-a,$ with $b-a \in A_0(a).$ Thus $(a,b)=\gvp_a(b)=1.$
Otherwise $ab=0.$  If $a$ has type $(\gl,\gd)$ and $b$ has type
$(\gd,\gl)$ then either $ab = 0$ so $(a,b) = 0,$ or $b = (b-y)+y$ in
$A_{\operatorname{exc},3}(\{a,b\},\gl)$, so $(a,b) =\gvp_a(b) = 0.$
\end{examples}

Recall that the \textbf{radical} of an associative bilinear form on
an algebra $A$ is $\{ u\in A: (a,A) = 0\}$, and is an ideal of $A.$

\begin{remark}\label{rad}
Let $A$ be a PAJ and let $(\ ,\ )$ be a Frobenius
form on $A.$  Let $b\in A$ be a primitive axis of type $(\gl,\gd)$
(we allow here $\gl=\gd$).
If $(b,b)=0,$ then $b$ is in the radical $R$ of $(\ ,\ ).$
Indeed, let $y\in A_0(b).$  Then $(b,y)=(b^2,y)=(b,by)=(b,0)=0.$
Let $z\in A_{\gl,\gd}(b).$  Then $(b,z)=(b^2,z)=(b,bz)=\gl (b,z),$
so $(b,z)=0.$  Since $A=\ff b+A_0(b)+A_{\gl,\gd}(b),$
if $(b,b)=0,$ then $(b,A)=0,$ so $b\in R.$
\end{remark}

\begin{lemma}
The radical  of the Frobenius form on a uniformly generated PAJ $A$ of type $\{ \gl, \gd\},$
with $\gl+\gd=1,$ and $\gl\ne\gd,$
equals
$R =\sum \ff(a -b),$
where the sum is taken over all pairs $a,b$ of primitive axes  of the same type,
and in the same connected component.
$A/R$ is a direct product of copies of $\ff.$
\end{lemma}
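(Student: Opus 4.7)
The plan is to combine the explicit form values supplied by Examples~\ref{Frobnoncom} with the structural decomposition of $A$ from Theorems~\ref{U1} and \ref{almostcomm}. The key input is that, by Examples~\ref{Frobnoncom}, $(a,b)=1$ whenever $a,b\in\chi(A)$ are of the same type in the same connected component of the axial graph, and $(a,b)=0$ otherwise. I will write $R_0$ for the sum appearing in the statement.

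First I would verify the easy inclusion $R_0\subseteq R.$ For $a,b$ of the same type in the same component, and any primitive axis $c,$ the form values give $(a,c)=(b,c),$ so $(a-b,c)=0.$ Since $A$ is spanned by $\chi(A)$ by Theorem~\ref{KMt}(i), this forces $a-b\in R.$

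For the reverse inclusion $R\subseteq R_0,$ the crucial preliminary is to show $Z\subseteq R_0.$ Taking the generating set to be $X:=\chi(A),$ Theorem~\ref{almostcomm}(viii) gives, for chosen representatives $a'_j$ of the components of $X_{\gl,\gd}$ and $a''_j$ of $X_{\gd,\gl},$
\[
Z\subseteq\sum_j A'_{\gl,\gd}(a'_j)+\sum_j A''_{\gl,\gd}(a''_j).
\]
Theorem~\ref{U1}(1) together with Lemma~\ref{p1} then identifies each $A'_{\gl,\gd}(a'_j)$ as the span of the differences $x-a'_j,$ with $x$ a primitive axis in the component of $a'_j$ (and analogously for the $A''$ terms), so $Z\subseteq R_0.$ Next, by Theorem~\ref{U1}, $\lan\lan X_{\gl,\gd}\ran\ran=\prod_i\uu_{E_i}(\gl),$ and by Lemma~\ref{p1} each factor equals $\ff e_i+A'_{\gl,\gd}(e_i)$ for a chosen representative $e_i.$ Doing the same on the $(\gd,\gl)$ side and invoking $A=\lan\lan X_{\gl,\gd}\ran\ran+\lan\lan X_{\gd,\gl}\ran\ran+Z$ from Theorem~\ref{almostcomm}(ii), I will obtain a decomposition
\[
A=\sum_i\ff e_i+\sum_l\ff f_l+R_0.
\]
Given $u\in R,$ write $u=\sum\ga_ie_i+\sum\gb_lf_l+r$ with $r\in R_0\subseteq R.$ Pairing against $e_k$ (resp.~$f_m$) annihilates $r$ and, by the form values, kills all other $e_i$ and all $f_l,$ leaving $\ga_k=0$ (resp.~$\gb_m=0$). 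Hence $u=r\in R_0,$ and $R=R_0.$

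For the final assertion, the images $\bar e_i,\bar f_l$ in $A/R_0$ span the quotient and are pairwise orthogonal idempotents: distinct $e_ie_{i'}=0$ by definition of connected component, $f_lf_{l'}=0$ likewise, and $e_if_l\in Z\subseteq R_0.$ The nondegeneracy of the descended form on $A/R_0$ shows that these idempotents are also linearly independent, so $A/R_0\cong\prod\ff.$ The main obstacle I anticipate is purely bookkeeping, namely locating the right ingredient (Theorem~\ref{U1}, Lemma~\ref{p1}, or a particular part of Theorem~\ref{almostcomm}) for each containment; the form-value dichotomy from Examples~\ref{Frobnoncom} makes the actual verifications immediate.
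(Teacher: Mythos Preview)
Your proof is correct and follows essentially the same route as the paper's: establish $R_0\subseteq R$ via Examples~\ref{Frobnoncom}, identify $R_0$ with the ideal $I$ of Theorem~\ref{almostcomm}(ix) so that $A/R_0$ is a direct product of copies of $\ff$, and conclude nondegeneracy of the descended form because the component representatives are orthonormal. The only difference is packaging: the paper invokes Theorem~\ref{almostcomm}(ix) as a single black box for the structure of $A/R$, whereas you reassemble that statement from Theorem~\ref{almostcomm}(ii),(viii), Theorem~\ref{U1}, and Lemma~\ref{p1}, and then spell out the orthonormality argument that the paper leaves implicit.
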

\begin{proof} Clearly $R$ is contained in the radical, by
Examples~\ref{Frobnoncom}.  By Theorem \ref{almostcomm}(ix), $A/R$ is a direct product of copies
of $\ff,$ so $R$ is the radical.
\end{proof}

\begin{thm} \label{A02}
Let $a\in A$ be a primitive axis. Then either
\begin{itemize}
\item[(i)]
$A_0(a)^2\subseteq A_0(a);$ or

\item[(ii)]
$a$ is of type $-1\ne\half,$  there exists a primitive axis
$b\in A$ of type $2$ with $ab\ne 0,$
and $a$ is in the radical of any Frobenius form on $A.$
\end{itemize}
\end{thm}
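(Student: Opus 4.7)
The plan is to dichotomize via Remark~\ref{-1}: given a primitive axis $a\in A$, either $a$ can be placed into a generating set $X'$ as in Theorem~\ref{thm frob}, or $a$ is of commutative type $-1\ne\half$ and has a neighbor $b$ of type $2$ in the axial graph. These two possibilities will yield (i) and (ii), respectively.

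In the first case, I would invoke Theorem~\ref{thm frob} with $a\in X'$ to obtain a Frobenius form $(\,\cdot\,,\,\cdot\,)$ on $A$ satisfying $(a,a)=1$. Given $u,v\in A_0(a)$, the fusion rules let me write $uv=\alpha a+w$ for some $w\in A_0(a)$. The Frobenius relation yields $(a,uv)=(au,v)=0$, while Lemma~\ref{O} (different eigenspaces of $a$ are orthogonal) yields $(a,uv)=\alpha(a,a)+(a,w)=\alpha$. Hence $\alpha=0$ and $uv\in A_0(a)$, giving (i).

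In the second case I must show that $a$ lies in the radical of \emph{every} Frobenius form on $A$, not only of the normalized one constructed in Theorem~\ref{thm frob}. The key input is that the eigenspace orthogonality of \S\ref{Frob} depends only on the Frobenius identity $(xy,z)=(x,yz)$ together with the fusion rules: for any primitive axis $c$ of type $(\gl,\gd)$ and any $y\in A$, the computation $(c,y)=(c^2,y)=(c,cy)$ forces $(c,y)=\gvp_c(y)(c,c)$, and the symmetric identity $(y,c)=\gvp_c(y)(c,c)$ follows in the same way; bilinearity then makes \emph{any} Frobenius form on a PAJ symmetric. Applying Proposition~\ref{c+}(3)(vi) to $B:=\lan\lan a,b\ran\ran$ with $\gl=-1,\ \gl'=2$ gives $\gvp_a(b)=3/2$ and $\gvp_b(a)=0$. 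Then the axis $a$ yields $(a,b)=(3/2)(a,a)$ and the axis $b$ yields $(b,a)=0$. Symmetry forces $(3/2)(a,a)=0$, hence $(a,a)=0$ (in characteristic~$3$ one has $-1=\half$ in $\ff$, so case~(ii) is vacuous and we may assume $\charc\ff\ne 3$). Remark~\ref{rad} then identifies $a$ as an element of the radical.

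The main obstacle will be confirming that the orthogonality-and-symmetry machinery of \S\ref{Frob} does indeed apply to an arbitrary Frobenius form; this is settled by the observation above that those arguments use nothing beyond the Frobenius relation and the fusion rules satisfied by a primitive axis. The characteristic~$3$ degeneration of $-1=\half$ is a small loose end that makes case~(ii) vacuous in that characteristic, so it causes no trouble.
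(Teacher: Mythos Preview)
Your argument is correct. In case~1 (when $a$ can be placed in $X'$) your proof is identical to the paper's. In case~2 the paper takes a shorter contrapositive route: given \emph{any} Frobenius form, if $(a,a)\ne 0$ then the very computation of case~1 (which needs only $(a,a)\ne 0$, not $(a,a)=1$) yields $A_0(a)^2\subseteq A_0(a)$; hence if (i) fails then $(a,a)=0$ for every Frobenius form, and Remark~\ref{rad} gives (ii). Your route instead establishes symmetry of an arbitrary Frobenius form on a PAJ via $(c,y)=(y,c)=\gvp_c(y)(c,c)$ for primitive axes $c$, and then uses the explicit values $\gvp_a(b)=\tfrac32$, $\gvp_b(a)=0$ from Proposition~\ref{c+}(3)(vi) to force $(a,a)=0$ directly.

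What each approach buys: the paper's contrapositive is shorter and requires no extra input beyond case~1 and Remark~\ref{rad}. Your argument needs the symmetry observation and the concrete values of $\gvp$, but in return it proves the unconditional statement that an axis of type $-1$ adjacent to an axis of type $2$ lies in the radical of \emph{every} Frobenius form, independently of whether $A_0(a)^2\subseteq A_0(a)$; this is precisely the content of the paper's closing remark about $B(2,\tfrac32)$, now established for all Frobenius forms rather than only the normalized one. (It does not, however, settle the open Question at the end of \S\ref{Frob}.)
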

\begin{proof}
If $a$ is not as in (ii),
then using Remark \ref{-1},
we  choose a Frobenius form such that $a\in X'.$ Let $x,y\in A_0(a).$  Then
\[
(a,xy)=(ax,y)=0.
\]
Since $xy\in \ff a+A_0(a),$ and $(a, A_0(a))=0,$ while $(a,a)=1,$
we must have $xy\in A_0(a).$

Now suppose that $a$ is as in (ii), and let $(\ ,\ )$ be
any Frobenius form on $A.$
   If $(a,a)\ne 0,$ then the same argument as
in the previous paragraph of the proof shows that $A_0(a)^2\subseteq A_0(a).$
Hence  $(a,a)=0,$ so by Remark \ref{rad}, $a$ is in the radical of $(\ ,\ ).$
\end{proof}

\begin{remark}
Let $A=B(2,\frac{3}{2}),$ as in Remark \ref{B}, with $2\ne -1.$
So $A$ is generated by primitive axes $a,$ of type $2$ and $b$ of type
$-1.$  One can check that for the Frobenius form $(\ ,\ )$
that we constructed in Theorem \ref{thm frob}, $(b,b)=0,$
so by Remark \ref{rad}, $b$ is in the Radical of that form.
However the following question remains open:
\medskip

\noindent
{\bf Question.}
\smallskip

\noindent
Let $A$ be a CPAJ and let $a\in A$
be a primitive axis as in Theorem \ref{A02}(ii).
Is it true that $A_0(a)^2\subseteq A_0(a)?$
\end{remark}


\end{document}